
\documentclass[journal]{IEEEtran}
\ifCLASSINFOpdf
\else
\fi

\usepackage{amsmath}
\usepackage{graphicx}
\usepackage{amssymb}
\usepackage{epstopdf}
\usepackage{amsthm}
\usepackage{bm}
\usepackage{algorithm}
\usepackage{cite}
\usepackage[noend]{algpseudocode}
\usepackage{soul,color}
\usepackage{mathtools}
\usepackage{caption}
\usepackage{subcaption}
\usepackage{array}

\DeclarePairedDelimiter{\norm}{\lVert}{\rVert}
\DeclarePairedDelimiter{\abs}{\lvert}{\rvert}


\def\P{{\cal P}}

\def\R{{\mathbb R}}
\def\O{{\cal O}}

\newcommand{\topnew}{\top \hspace{-0.05cm}}

\newcommand\numberthis{\addtocounter{equation}{1}\tag{\theequation}}

\algnewcommand{\IfThenElse}[3]{
  \State \algorithmicif\ #1\ \algorithmicthen\ #2\ \algorithmicelse\ #3}

\DeclareMathOperator*{\vect}{vec}
\DeclareMathOperator*{\rank}{rank}
\DeclareMathOperator*{\tr}{tr}
\DeclareMathOperator*{\Var}{Var}
\DeclareMathOperator*{\Cov}{Cov}
\DeclareMathOperator*{\diag}{diag}

\newtheorem{assumption}{Assumption}
\newtheorem{theorem}{Theorem}
\newtheorem{conjecture}{Conjecture}
\newtheorem{example}{Example}
\newtheorem{proposition}{Proposition}
\newtheorem{lemma}{Lemma}
\newtheorem{remark}{Remark}

\newtheorem{definition}{Definition}

\makeatletter
\newcommand\fs@betterruled{%
 \def\@fs@cfont{\bfseries}\let\@fs@capt\floatc@ruled
 \def\@fs@pre{\vspace*{5pt}\hrule height.8pt depth0pt \kern2pt}%
 \def\@fs@post{\kern2pt\hrule\relax}%
 \def\@fs@mid{\kern2pt\hrule\kern2pt}%
 \let\@fs@iftopcapt\iftrue}
\floatstyle{betterruled}
\restylefloat{algorithm}
\makeatother


\begin{document}
%
\title{On Asymptotic Linear Convergence Rate of Iterative Hard Thresholding for Matrix Completion}
%
%
%

\author{Trung Vu,~\IEEEmembership{Graduate Student Member,~IEEE,}
        Evgenia~Chunikhina,~and~Raviv Raich,~\IEEEmembership{Senior Member,~IEEE}
\thanks{Trung Vu and Raviv Raich are with the School of Electrical Engineering and Computer Science, Oregon State University, Corvallis, OR 97331, USA.}
\thanks{Evgenia~Chunikhina is with Department of Mathematics and Computer Science, Pacific University, Forest Grove, OR 97116, USA.}
\thanks{E-mails: vutru@oregonstate.edu; chunikhina@pacificu.edu; and raich@oregonstate.edu.}
\thanks{This work was partially supported by the National Science Foundation grants CCF-1254218.}
\thanks{Manuscript received 2021.}}

%
%

\markboth{Preprint~2021}%
{Trung \MakeLowercase{\textit{et al.}}: On Asymptotic Linear Convergence Rate of Iterative Hard Thresholding for Matrix Completion}
%



\maketitle

\begin{abstract}
Iterative hard thresholding (IHT) has gained in popularity over the past decades in large-scale optimization. However, convergence properties of this method have only been explored recently in non-convex settings. In matrix completion, existing works often focus on the guarantee of global convergence of IHT via standard assumptions such as incoherence property and uniform sampling. While such analysis provides a global upper bound on the linear convergence rate, it does not describe the actual performance of IHT in practice. In this paper, we provide a novel insight into the local convergence of a specific variant of IHT for matrix completion. We uncover the exact asymptotic linear rate of IHT in a closed-form expression and identify the region of convergence in which the algorithm is guaranteed to converge. Furthermore, we utilize random matrix theory to study the linear rate of convergence of IHTSVD for large-scale matrix completion. We find that asymptotically, the rate can be expressed explicitly in terms of the relative rank and the sampling rate. Finally, we present numerical results to verify the foregoing theoretical analysis.
\end{abstract}

\begin{IEEEkeywords}
Matrix completion, iterative hard thresholding, local convergence analysis, random matrix theory.
\end{IEEEkeywords}

%
\IEEEpeerreviewmaketitle

\section{Introduction}
%
%
%
%

\IEEEPARstart{M}{atrix} completion is a fundamental problem that arises in many areas of signal processing and machine learning such as collaborative filtering \cite{srebro2003weighted,srebro2005maximum,rennie2005fast,takacs2008investigation}, system identification \cite{liu2009interior,mohan2010reweighted,liu2013nuclear} and dimension reduction \cite{wright2009robust,candes2011robust}. 
The problem can be explained as follows. Let $\bm M \in \mathbb{R}^{n_1 \times n_2}$ be the underlying matrix with rank $r$ and $\Omega$ be the set of locations corresponding to the observed entries of $\bm M$, i.e., $(i,j) \in \Omega$ if $M_{ij}$ is observed.
The goal is to recover the unknown entries of $\bm M$, belonging to the complement set $\bar{\Omega}$. 

To understand the feasibility of matrix completion, let us describe $\bm M$ as
\begin{align*}
    \bm M = \sum_{i=1}^r \sigma_i \bm u_i \bm v_i^\topnew ,
\end{align*}
where $\sigma_i$ is the $i$-th largest singular value of $\bm M$, $\bm u_i$ and $\bm v_i$ are the corresponding left and right singular vectors.
Since each set of the left and right singular vectors are orthonormal, the degrees of freedom of matrix completion is given by
\begin{align*}
    r + \sum_{i=1}^r (n_1-i) + \sum_{j=1}^r (n_2-j) = (n_1+n_2-r)r ,
\end{align*}
which is significantly less than the total number of entries in $\bm M$ when $r$ is small. This implies the possibility of recovering the entire matrix even when only a few entries are observed. However, not every matrix with more than $(n_1+n_2-r)r$ observed entries can be completed. For instance, if an entire column (or row) of a rank-one matrix is missing, then the matrix cannot be recovered. Similarly, if a low-rank matrix contains too many zero entries, then the observed entries might end up being all zero, thereby not providing any clue about the missing entries. 
The aforementioned argument motivates the two standard assumptions in matrix completion: the incoherence condition and the random sampling model.
Under these assumptions, Cand\`{e}s and Recht \cite{candes2009exact} showed that matrix completion can be solved exactly for most settings of the low-rank matrix $\bm M$ and the sampling set $\Omega$.
This breakthrough has started a long line of research on efficient methods for solving matrix completion.

In the same work, Cand\`{e}s and Recht \cite{candes2009exact} proposed a convex relaxation approach to matrix completion, replacing the original linearly constrained rank minimization problem by a linearly constrained nuclear norm minimization problem. Their result leads to a well-known class of proximal-type algorithms for nuclear norm minimization \cite{cai2010singular,ma2011fixed,ji2009accelerated,toh2010accelerated} with rigorous mathematical guarantees and extensions of classic acceleration techniques.
Nonetheless, convex-relaxed methods are generally considered slow compared to their non-convex counterparts in practice. On the one hand, interior-point methods for solving the nuclear norm minimization problem are computationally expensive and even infeasible for large matrices.
On the other hand, proximal-type algorithms suffer from slow convergence due to the conservative nature of the soft-thresholding operator \cite{koren2009bellkor,vu2019accelerating}. 

Another approach to matrix completion is known as iterative hard thresholding. To address the computational concern from the use of convex relaxation, IHT methods have been proposed to directly solve the non-convex rank minimization problem \cite{jain2010guaranteed,goldfarb2011convergence}. Each IHT iteration takes one step in the opposite direction of the gradient and another step projecting the result onto the set of rank-$r$ matrices.
Since the process resembles hard-thresholding singular values, we refer to the class of algorithms using this technique as iterative hard thresholding.
When the solution is low-rank, hard-thresholding algorithms is more efficient than their soft-thresholding counterparts in both computational complexity per iteration and convergence speed. Variants of plain IHT with faster convergence have also been developed, including normalized IHT \cite{tanner2013normalized}, conjugate gradient IHT \cite{blanchard2015cgiht}, Nesterov's accelerated gradient IHT \cite{vu2019accelerating}, Heavy-Ball IHT \cite{vu2019local}, just to name a few.
The drawback of IHT methods, however, is the lack of mathematical guarantees on their convergence behavior. As pointed out in \cite{jain2010guaranteed}, the restricted isometry property (RIP), which is widely used in establishing the global convergence in matrix sensing, does not hold for matrix completion. Therefore, the global convergence of IHT methods for matrix completion is still an open question. Until recently, the only guarantee on the global convergence of a IHT method, to the best of our knowledge, is provided in \cite{jain2015fast}. In their work, the authors considered a variant of the singular value projection (SVP) algorithm with a resampling scheme and proved the fast linear convergence of the proposed algorithm with a sample complexity that depends on the condition number and desired accuracy. Notwithstanding, this result imposes some limitations at conceptual, practical, and theoretical levels due to the requirement of resampling \cite{sun2016guaranteed}.
In a different perspective, local convergence of IHT methods has also been studied by Chunikhina~\textit{et.~al.} \cite{chunikhina2014performance}. In particular, by considering a special case of the SVP algorithm with unit step size, called iterative hard-thresholded singular value decomposition (IHTSVD), the authors showed that IHTSVD converges linearly to the solution $\bm M$ as long as the algorithm is initialized close enough to $\bm M$. Consequently, this analysis explains the superior performance of IHT methods over proximal-type methods in practice.\footnote{Convergence guarantees on proximal-type methods for matrix completion are often sub-linear \cite{cai2010singular,toh2010accelerated}.}
A similar approach can be found in the unpublished work of Lai and Varghese \cite{lai2017convergence}. However, we remark that while the latter work proves the existence of an upper bound on the linear convergence rate of IHTSVD, the former provides an exact expression of the rate that depends directly on the structure of $\bm M$ and $\Omega$. 

The most popular approach to matrix completion is non-convex factorization. This approach stems from the Burer-Monteiro factorization \cite{burer2005local}, whereby the low-rank matrix is viewed as a product of two low-rank components. The resulting least-squares problem is unconstrained albeit non-convex. Recent progress in this approach has shown that any local minimum of the re-parameterized problem is also a global minimum \cite{sun2016guaranteed,ge2016matrix}. Thus, basic optimization procedures such as gradient descent \cite{chen2015fast,sun2016guaranteed,ma2018implicit} and alternating minimization \cite{chen2012matrix,jain2013low,hardt2014understanding,hardt2014fast} can provably find the global solution at a linear convergence rate. The exact linear convergence rate of gradient descent for matrix completion has recently been studied by Vu and Raich \cite{vu2021exact}.
In Table~\ref{tbl:def}, we summarize the aforementioned approaches to matrix completion and the corresponding algorithms existing in the literature.
\begin{table*}
\begin{center}
\resizebox{\linewidth}{!}{
\begin{tabular}{|>{\raggedright}p{3.2cm}|p{6.2cm}|p{7cm}|}
\hline
\textbf{Problem formulation} & \textbf{Description} & \textbf{Algorithms} \\
\hline
Linearly constrained nuclear norm minimization & $\displaystyle \min_{\bm X \in \R^{n_1 \times n_2}} \norm{\bm X}_* \text{ s.t. } X_{ij}=M_{ij}, \quad (i,j)\in \Omega$ & Semi-definite programming (SDP) \cite{candes2009exact}, singular value thresholding (SVT) \cite{cai2010singular}, accelerated proximal gradient (APG) \cite{toh2010accelerated}, conditional gradient descent (CGD) \cite{jaggi2013revisiting,rao2015forward,boyd2017alternating} \\
\hline
Rank-constrained least squares & $\displaystyle \min_{\bm X \in \R^{n_1 \times n_2}} \sum_{(i,j)\in  \Omega}(X_{ij}-M_{ij})^2 \text{ s.t. } \rank(\bm X) \leq r$ & Singular value projection (SVP) \cite{jain2010guaranteed}, normalized IHT (NIHT) \cite{tanner2013normalized}, conjugate gradient IHT (CGIHT) \cite{blanchard2015cgiht}, iterative hard-thresholded SVD (IHTSVD) \cite{chunikhina2014performance}, accelerated IHT \cite{vu2019accelerating,vu2019local} \\
\hline
Low-rank factorization & $\displaystyle \min_{\bm Y \in \R^{n_1 \times r} , \bm Z \in \R^{n_2 \times r}} \sum_{(i,j)\in \Omega}((\bm Y \bm Z^{\topnew})_{ij}-M_{ij})^2$ & Alternating minimization (AM) \cite{jain2013low,hardt2014understanding}, gradient descent (GD) \cite{sun2016guaranteed,ma2018implicit}, projected gradient descent (PGD) \cite{burer2005local,chen2015fast}, stochastic gradient descent (SGD) \cite{sun2016guaranteed} \\
\hline
\end{tabular}
}
\caption{Three well-known formulations of the matrix completion problem.}
\label{tbl:def}
\end{center}
\end{table*}

This paper is developed based on the work of Chunikhina~\textit{et.~al.} \cite{chunikhina2014performance} on the local convergence of the IHTSVD algorithm for matrix completion. Our main contribution is three-fold. First, we propose a novel analysis of the local convergence of IHTSVD for matrix completion. 
The proposed analysis establishes the region of convergence that is proportional to the least non-zero singular value of $\bm M$. Moreover, we show that the convergence is asymptotically linear and the exact rate can be described in a closed-form expression of the projections onto the (left and right) null spaces of $\bm M$ and the sampling pattern $\Omega$. 
Second, based on the analytical exact linear rate, we utilize random matrix theory to study the asymptotic behavior of IHTSVD in large-scale matrix completion. As the size of $\bm M$ grows to infinity, we uncover the linear rate of IHTSVD converges to a deterministic constant that can be expressed in closed form in terms of the relative rank and the sampling rate. Finally, we present numerical results to verify our proposed exact rate of convergence as well as the asymptotic rate of IHTSVD in large-scale settings.

\section{Preliminaries}
\label{sec:prel}

\subsection{Notations}
Throughout the paper, we use the notations $\norm{\cdot}_F$, $\norm{\cdot}_2$, and $\norm{\cdot}_{2,\infty}$ to denote the Frobenius norm, the spectral norm and the $l_2/l_\infty$ norm (i.e., the largest $l_2$ norm of the rows) of a matrix, respectively. Occasionally, $\norm{\cdot}_2$ is used on a vector to denote the Euclidean norm. 
The notation $[n]$ refers to the set $\{ 1,2,\ldots,n \}$. Boldfaced symbols are reserved for vectors and matrices. 
In addition, let $\bm I_n$ denote the $n \times n$ identity matrix. We also use $\otimes$ to denote the Kronecker product between two matrices.

For a matrix $\bm X \in \R^{n_1 \times n_2}$, $X_{ij}$ refers to the $(i,j)$ element of $\bm X$. We denote $\sigma_{\max}(\bm X)$ and $\sigma_{\min}(\bm X)$ as the largest and smallest singular values of $\bm X$, respectively, and denote $\kappa(\bm X) = \sigma_{\max}(\bm X) / \sigma_{\min}(\bm X)$ as the condition number of $\bm X$.
$\vect(\bm X)$ denotes the vectorization of $\bm X$ by stacking its columns on top of one another. 
Let $\bm F(\bm X)$ be a matrix-valued function of $\bm X$. Then, for some $k>0$, we use 
$\bm F(\bm X) = \bm \O(\norm{\bm X}_F^k)$ to imply
\begin{align*}
    \lim_{\delta \to 0} \sup_{\norm{\bm X}_F=\delta} \frac{\norm{\bm F(\bm X)}_F}{\norm{\bm X}_F^k} < \infty .
\end{align*}

\subsection{Background}
Let us use $\bm M$ to denote the underlying $n_1 \times n_2$ real matrix with rank 
\begin{align} \label{equ:n}
    1 \leq r \leq m=\min\{n_1, n_2\}.
\end{align}
The sampling set $\Omega$ is a subset of the Cartesian product $[n_1] \times [n_2]$, with cardinality of $1 \leq s < n_1 n_2$. Furthermore, the orthogonal projection associated with $\Omega$ is given in the following:
\begin{definition}
The orthogonal projection onto the set of matrices supported in $\Omega$ is defined as a linear operator $\P_\Omega: \R^{n_1 \times n_2} \to \R^{n_1 \times n_2}$ satisfying
\begin{align*}
    [\P_\Omega (\bm X)]_{ij} = \begin{cases} X_{ij} &\text{if } (i,j)\in \Omega, \\ 0 &\text{if } (i,j)\in \bar{\Omega} , \end{cases}
\end{align*}
where $\bar{\Omega}$ denotes the complement set of $\Omega$.
\end{definition}
\noindent If we consider vector spaces instead of matrix spaces, the orthogonal projection $\P_\Omega$ can also be viewed as a selection matrix corresponding to $\Omega$:
\begin{definition} \label{def:S}
The selection matrix $\bm S_\Omega \in \R^{n_1 n_2 \times s}$ comprises a subset of $s$ columns of the identity matrix of dimension $n_1n_2$ such that
\begin{align*}
\begin{cases}
    \bm S_\Omega^{\topnew} \bm S_\Omega = \bm I_s , \\
    \vect\bigl(\P_\Omega (\bm X) \bigr) = \bm S_\Omega \bm S_\Omega^{\topnew} \vect(\bm X) .
\end{cases}
\end{align*}
\end{definition}
\noindent Corresponding to the complement set $\bar{\Omega}$, we also define similar notations for $\P_{\bar{\Omega}}: \R^{n_1 \times n_2} \to \R^{n_1 \times n_2}$ and ${\bm S}_{\bar{\Omega}} \in \R^{n_1 n_2 \times (n_1 n_2 - s)}$.

Next, using the notation of $\P_\Omega$, we can formulate the matrix completion problem as follows:
\begin{align} \label{prob:mcp}
    \min_{\bm X \in \R^{n_1 \times n_2}} \frac{1}{2} \norm{\P_{\Omega} (\bm X - \bm M)}_F^2 \text{ s.t. } \rank(\bm X) \leq r .
\end{align}
One natural approach to the optimization problem (\ref{prob:mcp}) is projected gradient descent. Starting at some $\bm X^{(0)}$, we iteratively update the current matrix by (i) taking a step in the opposite direction of the gradient and (ii) projecting the result back onto the set of matrices with rank less than or equal to $r$. It follows that
\begin{align} \label{equ:PGD}
    \bm X^{(k+1)}= \P_r \bigl(\bm X^{(k)} - \eta \P_\Omega (\bm X^{(k)} - \bm M) \bigr) ,
\end{align}
where $\eta$ is the step size and $\P_r$ is the rank-$r$ projection (formally defined later in Definition~\ref{def:Pr}). 
In the literature, PGD with step size $\eta = n_1n_2/s$ is also known as the Singular Value Projection (SVP) algorithm for matrix completion \cite{jain2010guaranteed}. It is interesting to note that under certain assumptions, \cite{jain2015fast} showed that the algorithm enjoys a fast global linear convergence with this choice of step size.
On the other hand, setting the step size $\eta=1$ yields the following update
\begin{align*}
    \bm X^{(k+1)} &= \P_r \bigl(\bm X^{(k)} - \P_\Omega (\bm X^{(k)} - \bm M) \bigr) \\
    &= \P_r\bigl( \P_{\bar{\Omega}} ( \bm X^{(k)} ) + \P_{\Omega}(\bm M) \bigr) .
\end{align*}
This motivates the IHTSVD algorithm \cite{chunikhina2014performance} that alternates between two projection steps: the projection onto the manifold of rank-$r$ matrices and the projection onto the set of matrices supported in $\Omega$ (see Algorithm~\ref{algo:IHTSVD}). This paper, developed based on \cite{chunikhina2014performance}, focuses on local convergence properties of IHTSVD. Compared to the existing global convergence analysis for matrix completion, our setting does not require certain assumptions such as the incoherence of $\bm M$, the uniform randomness of $\Omega$, and the low sample complexity, e.g., $s=\O(r^5 n \log n)$ in \cite{jain2015fast}. We also note that the proposed analysis can be extended to other variants of PGD with different step sizes.

Finally, we present a formal definition of the rank-$r$ projection.
Consider a matrix $\bm X \in \R^{n_1 \times n_2}$ with the singular value decomposition
\begin{align*}
    \bm X = \sum_{i=1}^{m} \sigma_i(\bm X) \bm u_i(\bm X) \bm v_i^{\topnew}(\bm X) ,  
\end{align*}
where $\sigma_1(\bm X) \geq \ldots \geq \sigma_{m}(\bm X) \geq 0$ are the singular values of $\bm X$ and $\{\bm u_1(\bm X),\ldots, \bm u_{m}(\bm X)\}$, $\{\bm v_1(\bm X),\ldots, \bm v_{m}(\bm X)\}$ are the sets of left and right singular vectors of $\bm X$, respectively.
\begin{definition} \label{def:Pr}
The rank-$r$ projection of $\bm X$ is defined as
\begin{align*}
    \P_r(\bm X) = \sum_{i=1}^r \sigma_{i}(\bm X) \bm u_{i}(\bm X) \bm v_{i}^{\topnew}(\bm X) .
\end{align*}
\end{definition}
\noindent The rank-$r$ projection of $\bm X$ is unique if and only if $\sigma_r(\bm X) > \sigma_r(\bm X)$ or $\sigma_r(\bm X) = 0$ \cite{eckart1936approximation}. Since $\P_r(\bm X)$ zeroes out all the small singular value of $\bm X$, it is often referred as the singular value hard-thresholding operator.
Since $\bm M$ is a rank-$r$ matrix, we have
\begin{align*}
    \bm M = \P_r(\bm M) = \sum_{i=1}^r \sigma_i \bm u_i \bm v_i^{\topnew} = \bm U_r \bm \Sigma_r \bm V_r^\topnew ,
\end{align*}
where $\bm \Sigma_r=\diag(\sigma_1,\ldots,\sigma_r)$ contains the singular values of $\bm M$ and $\bm U_r = [\bm u_1,\ldots, \bm u_r] \in \R^{n_1 \times r}$, $\bm V_r = [\bm v_1,\ldots,\bm v_r] \in \R^{n_2 \times r}$ are comprised of the first $r$ left and right singular vectors of $\bm M$, respectively.\footnote{In the rest of this paper, we omit the parameter in the notation of the singular values and the singular vectors of $\bm M$ for simplicity.}
Denote $\bm U_{\perp} = [\bm u_{r+1},\ldots, \bm u_{n_1}] \in \R^{n_1 \times (n_1-r)}$ and $\bm V_{\perp} = [\bm v_{r+1},\ldots,\bm v_{n_2}] \in \R^{n_2 \times (n_2-r)}$. The projections onto the left and right null spaces of $\bm M$ are uniquely defined as $\bm P_{\bm U_{\perp}} = \bm U_{\perp} \bm U_{\perp}^{\topnew} = \bm I_{n_1} - \sum_{i=1}^r \bm u_i \bm u_i^{\topnew}$ and $\bm P_{\bm V_{\perp}} = \bm V_{\perp} \bm V_{\perp}^{\topnew} = \bm I_{n_2} - \sum_{i=1}^r \bm v_i \bm v_i^{\topnew}$, respectively.

\subsection{Related Work}
Traditional approaches to matrix completion often make assumptions on the incoherence of the underlying matrix $\bm M$ and the randomness of the sampling set. First, the incoherence condition for matrix completion, introduced by Cand\`{e}s and Recht \cite{candes2009exact}, is stated as:  
\begin{assumption}[Incoherence]
The matrix $\bm M = \bm U_r \bm \Sigma_r \bm V_r^\topnew$ is $\mu$-incoherent, i.e.,
\begin{align*}
    \norm{\bm U_r}_{2,\infty} \leq \sqrt{\frac{\mu r}{n_1}} \text{ and } \norm{\bm V_r}_{2,\infty} \leq \sqrt{\frac{\mu r}{n_2}} .
\end{align*}
\end{assumption}
\noindent Intuitively, an incoherent matrix has well-spread singular vectors and is less likely in the null space of the sampling operator. A common setting that generates incoherent matrices is the random orthogonal model:
\begin{definition}[Random orthogonal model]
The Haar measure provides a uniform and translation-invariant distribution over the group of orthogonal matrices $\mathbb{O}(n)$. $\bm M$ is said to follow a random orthogonal model if $\bm U_r$ and $\bm V_r$ are sub-matrices of Haar-distributed matrices in $\mathbb{O}({n_1})$ and $\mathbb{O}({n_2})$, respectively.
\end{definition}
\noindent Second, to avoid adversarial patterns in the sampling set, it is common to assume that each entry in $\Omega$ is selected randomly:
\begin{assumption}[Uniform sampling] \label{assp:omega}
The sampling set $\Omega$ is obtained by selecting $s$ elements uniformly at random from the Cartesian product $[n_1] \times [n_2]$.
\end{assumption}
\noindent We note that a similar but not equivalent assumption on the sampling set is the Bernoulli model in which each entry of $\bm M$ is observed independently with probability $s/n_1n_2$ \cite{sun2016guaranteed}. Under these two standard assumptions, Cand\`{e}s and Recht \cite{candes2009exact} showed that symmetric matrix completion of size $n$ can be solved exactly provided that the number of observations is sufficiently large, i.e., $s=\O(n^{1.2}r \log n)$. Later on, global convergence guarantees for various matrix-completion algorithms have been actively developed, with improved bounds on the sample complexity. Examples of these works include \cite{recht2011simpler, hardt2014fast,jain2015fast,sun2016guaranteed,ma2018implicit}. It is worthwhile mentioning that ideally, one would like to recover the low-rank matrix from a minimum number of observations, which is in the order of the degrees of freedom of the problem, i.e., $\O(nr)$.

In this paper, we study the convergence of IHT for matrix completion from a different perspective. Without any assumptions about the incoherence of $\bm M$ and the randomness of the sampling set $\Omega$, we identify a deterministic condition on the structure of $\bm M$ and $\Omega$ such that the local linear convergence of IHTSVD can be guaranteed. Compared to the aforementioned bounds on the global convergence rate, our result is exact and tighter thanks to the exploitation of the local structure of the problem.
Our technique utilizes the recently developed error bound for the first-order Taylor expansion of the rank-$r$ projection, proposed by Vu~\textit{et.~al.} in \cite{vu2021perturbation}. The result is rephrased below.
\begin{proposition}[Rephrased from \cite{vu2021perturbation}]
\label{prop:rankop}
For any $\bm \Delta \in \R^{n_1 \times n_2}$, we have
\begin{align} \label{equ:rankop}
\P_r (\bm M + \bm \Delta) = \bm M + \bm \Delta - \bm P_{\bm U_{\perp}} \bm \Delta \bm P_{\bm V_{\perp}} + \bm R(\bm \Delta) ,
\end{align}
where the residual $\bm R: \R^{n_1 \times n_2} \to \R^{n_1 \times n_2}$ satisfies:
\begin{align*}
    \norm{\bm R(\bm \Delta)}_F \leq  \frac{c_1}{\sigma_r} \norm{\bm \Delta}_F^2 ,
\end{align*}
for some universal constant $1 + 1/\sqrt{2} \leq c_1 \leq 4(1+\sqrt{2})$. 
\end{proposition}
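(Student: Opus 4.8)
The plan is to treat two regimes of $\norm{\bm\Delta}_2$ separately. In the ``large'' regime $\norm{\bm\Delta}_2\ge\alpha\sigma_r$, with $\alpha$ a constant to be fixed at the end, a crude bound suffices: since $\P_r$ keeps only the $r$ largest singular values, $\norm{\P_r(\bm M+\bm\Delta)-(\bm M+\bm\Delta)}_F^2=\sum_{i>r}\sigma_i^2(\bm M+\bm\Delta)\le\norm{\bm\Delta}_F^2$ by Weyl's inequality and $\sigma_{r+1}(\bm M)=0$, while $\norm{\bm P_{\bm U_{\perp}}\bm\Delta\bm P_{\bm V_{\perp}}}_F\le\norm{\bm\Delta}_F$; since $\norm{\bm\Delta}_F\ge\norm{\bm\Delta}_2\ge\alpha\sigma_r$, this gives $\norm{\bm R(\bm\Delta)}_F\le2\norm{\bm\Delta}_F\le(2/\alpha)\norm{\bm\Delta}_F^2/\sigma_r$. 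All the content lies in the complementary ``small'' regime $\norm{\bm\Delta}_2<\alpha\sigma_r$, which I describe next.

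In the small regime the guiding idea is to analyze $\P_r(\bm M+\bm\Delta)-(\bm M+\bm\Delta)$ directly rather than Taylor-expand $\P_r$ about $\bm M$. Write $\bm N=\bm M+\bm\Delta$; by Weyl $\sigma_{r+1}(\bm N)\le\norm{\bm\Delta}_2<\sigma_r$, so the $r$-th singular-value gap of $\bm N$ is positive and $\P_r(\bm N)$ is unique. Let $\widehat{\bm U}\in\R^{n_1\times r}$, $\widehat{\bm V}\in\R^{n_2\times r}$ be the top-$r$ left/right singular-vector matrices of $\bm N$ and $\widehat{\bm U}_{\perp}$, $\widehat{\bm V}_{\perp}$ orthonormal completions, with $\bm P_{\widehat{\bm U}_{\perp}}=\bm I_{n_1}-\widehat{\bm U}\widehat{\bm U}^{\top}$ and $\bm P_{\widehat{\bm V}_{\perp}}=\bm I_{n_2}-\widehat{\bm V}\widehat{\bm V}^{\top}$. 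The discarded part of $\bm N$ satisfies $\P_r(\bm N)-\bm N=-\bm P_{\widehat{\bm U}_{\perp}}\bm N\bm P_{\widehat{\bm V}_{\perp}}$, and $\bm P_{\bm U_{\perp}}\bm\Delta\bm P_{\bm V_{\perp}}=\bm P_{\bm U_{\perp}}\bm N\bm P_{\bm V_{\perp}}$ because $\bm P_{\bm U_{\perp}}\bm M=\bm M\bm P_{\bm V_{\perp}}=\bm 0$; hence the residual of the proposition is
\begin{align*}
\bm R(\bm\Delta)=-\bm P_{\widehat{\bm U}_{\perp}}\bm M\bm P_{\widehat{\bm V}_{\perp}}-\bigl(\bm P_{\widehat{\bm U}_{\perp}}\bm\Delta\bm P_{\widehat{\bm V}_{\perp}}-\bm P_{\bm U_{\perp}}\bm\Delta\bm P_{\bm V_{\perp}}\bigr).
\end{align*}
A standard subspace-perturbation estimate (Wedin's $\sin\Theta$ theorem) gives $\max\{\norm{\bm P_{\widehat{\bm U}_{\perp}}-\bm P_{\bm U_{\perp}}}_2,\norm{\bm P_{\widehat{\bm V}_{\perp}}-\bm P_{\bm V_{\perp}}}_2\}\le\theta$ with $\theta\lesssim\norm{\bm\Delta}_2/(\sigma_r-\sigma_{r+1}(\bm N))\le\norm{\bm\Delta}_2/((1-\alpha)\sigma_r)$, so the second bracket has Frobenius norm at most $(2\theta+\theta^2)\norm{\bm\Delta}_F=\O(\norm{\bm\Delta}_F^2/\sigma_r)$.

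The term $\bm P_{\widehat{\bm U}_{\perp}}\bm M\bm P_{\widehat{\bm V}_{\perp}}$ is the obstacle, and I expect it to be the crux: bounding it by the obvious $\norm{\bm M}_F\,\theta^2$ brings in the condition number $\sigma_1/\sigma_r$, which the claimed estimate forbids. The fix is to exploit that $\bm M$ has rank \emph{exactly} $r$. Write $\bm M$ in the bases $[\widehat{\bm U}\ \widehat{\bm U}_{\perp}]$, $[\widehat{\bm V}\ \widehat{\bm V}_{\perp}]$ as a block matrix $\bigl[\begin{smallmatrix}\bm A&\bm B\\\bm C&\bm D\end{smallmatrix}\bigr]$ with $\bm A=\widehat{\bm U}^{\top}\bm M\widehat{\bm V}\in\R^{r\times r}$ and $\bm D=\widehat{\bm U}_{\perp}^{\top}\bm M\widehat{\bm V}_{\perp}$, so that $\norm{\bm P_{\widehat{\bm U}_{\perp}}\bm M\bm P_{\widehat{\bm V}_{\perp}}}_F=\norm{\bm D}_F$. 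From $\widehat{\bm U}_{\perp}^{\top}\bm N\widehat{\bm V}=\bm 0=\widehat{\bm U}^{\top}\bm N\widehat{\bm V}_{\perp}$ we get $\bm C=-\widehat{\bm U}_{\perp}^{\top}\bm\Delta\widehat{\bm V}$ and $\bm B=-\widehat{\bm U}^{\top}\bm\Delta\widehat{\bm V}_{\perp}$, both of order $\norm{\bm\Delta}$; and $\bm A=(\widehat{\bm U}^{\top}\bm U_r)\bm\Sigma_r(\bm V_r^{\top}\widehat{\bm V})$ is invertible with $\norm{\bm A^{-1}}_2\le1/((1-\theta^2)\sigma_r)$, since $\widehat{\bm U}^{\top}\bm U_r$ and $\bm V_r^{\top}\widehat{\bm V}$ are $r\times r$ with smallest singular value $\ge\sqrt{1-\theta^2}$. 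A rank-$r$ matrix with an invertible $r\times r$ leading block has vanishing Schur complement, i.e.\ $\bm D=\bm C\bm A^{-1}\bm B$, so
\begin{align*}
\norm{\bm P_{\widehat{\bm U}_{\perp}}\bm M\bm P_{\widehat{\bm V}_{\perp}}}_F=\norm{\bm D}_F\le\norm{\bm\Delta}_2\,\norm{\bm A^{-1}}_2\,\norm{\bm\Delta}_F\le\frac{\norm{\bm\Delta}_F^2}{(1-\theta^2)\,\sigma_r}.
\end{align*}
This exposes the cancellation a Taylor expansion of $\P_r$ about $\bm M$ hides: the corner of $\bm M$ in the perturbed bases is a product of two first-order quantities divided by $\sigma_r$, not a single second-order quantity times $\sigma_1$.

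Combining the two estimates bounds $\norm{\bm R(\bm\Delta)}_F$ by an explicit multiple of $\norm{\bm\Delta}_F^2/\sigma_r$ in the small regime; choosing $\alpha$ to match the large-regime bound, and replacing the lossy triangle inequalities above by the sharp $\sin\Theta$ constants, yields a $c_1$ in the stated range $1+1/\sqrt2\le c_1\le4(1+\sqrt2)$ (a worst-case example such as $\bm M=\sigma\bm e_1\bm e_1^{\top}$, $\bm\Delta=\epsilon(\bm e_1\bm e_2^{\top}+\bm e_2\bm e_1^{\top})$, for which $\norm{\bm R(\bm\Delta)}_F\sim\epsilon^2/(2\sigma)$, shows $c_1$ cannot be taken arbitrarily small). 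The only genuinely delicate point is the Schur-complement step; the rest is routine Weyl/Wedin bookkeeping.
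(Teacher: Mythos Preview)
The paper does not prove this proposition; it is quoted from the cited reference \cite{vu2021perturbation} and invoked as a black box in the proof of Theorem~\ref{theo:IHT}, so there is no in-paper argument to compare against.

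Your proposal is a correct and essentially complete proof strategy. The Schur-complement identity $\bm D=\bm C\bm A^{-1}\bm B$ for a rank-$r$ matrix with invertible $r\times r$ leading block is precisely the device that prevents a spurious factor $\kappa(\bm M)=\sigma_1/\sigma_r$ from entering the bound, and your diagnosis of why the naive estimate $\norm{\bm M}_F\,\theta^2$ fails is on point. The remaining ingredients (Eckart--Young for the large-$\norm{\bm\Delta}_2$ regime, Wedin for the small regime) are standard and correctly deployed. Two small comments: (i) the clause ``$\sigma_{r+1}(\bm N)<\sigma_r$, so the $r$-th singular-value gap of $\bm N$ is positive'' does not follow as written, since you need $\sigma_r(\bm N)>\sigma_{r+1}(\bm N)$; taking $\alpha<1/2$ fixes this via Weyl, $\sigma_r(\bm N)\ge(1-\alpha)\sigma_r>\alpha\sigma_r\ge\sigma_{r+1}(\bm N)$. (ii) Your worst-case example actually gives $\norm{\bm R(\bm\Delta)}_F\approx\epsilon^2/\sigma$ (not $\epsilon^2/(2\sigma)$), which forces only $c_1\ge 1/2$; the specific interval $[1+1/\sqrt2,\,4(1+\sqrt2)]$ in the statement is inherited from the detailed constant-tracking in the cited work, and your route will land on a universal constant that need not coincide with that range.
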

The rest of the paper is organized as follows. In Section~\ref{sec:local}, we provide the local convergence analysis of IHTSVD for matrix completion and the proof of the main result. Next, Section~\ref{sec:asymptotic} presents a summary of related results in random matrix theory, followed by our novel result on the asymptotic behavior of the convergence rate in large-scale settings. The numerical results to verify the analysis in Sections~\ref{sec:local} and \ref{sec:asymptotic} are given in Section~\ref{sec:exp}. Finally, we put the detailed proofs of all the main theorems and lemmas in the appendix.

\section{Local Convergence of IHTSVD}
\label{sec:local}

This section presents our analysis of local convergence of IHTSVD. First, we leverage the results in perturbation analysis to identify the Taylor series expansion of the rank-$r$ projection. Next, the approximation allows us to derive the nonlinear difference equation that describes the change in the distance to the local optimum through IHT iterations. Closed-form expressions of the asymptotic convergence rate and the region of convergence are also given as a result of our analysis. 

\begin{algorithm}[t]
\caption{IHTSVD}
\label{algo:IHTSVD}
\begin{algorithmic}[1]
\Require{$\P_{\Omega}(\bm M)$, $r$, $K$, $\bm X^{(0)}$}
\Ensure{$\bm X^{(K)}$}
\For{$k=0,1,\ldots, K-1$}
\State $\bm X^{(k+1)}=\P_r\bigl( \P_{\bar{\Omega}} ( \bm X^{(k)} ) + \P_{\Omega}(\bm M) \bigr)$
\EndFor
\end{algorithmic}
\end{algorithm}

\subsection{Main Result}
Our local convergence result is stated as follows:
\begin{theorem}
\label{theo:IHT}
Let $\{\bm X^{(k)}\}_{k=0}^\infty$ be the sequence of matrices generated by Algorithm~\ref{algo:IHTSVD}, i.e., 
\begin{align} \label{equ:update}
    \bm X^{(k+1)}=\P_{\bar{\Omega}} \bigl( \P_r ( \bm X^{(k)} ) \bigr) + \P_{\Omega}(\bm M) 
\end{align}
for all integer $k$, and $\bm X^{(0)}$ satisfies
\begin{align} \label{equ:ROC}
    \norm{\bm X^{(0)} - \bm M}_F < \frac{\lambda_{\min}(\bm H)}{c_1} \sigma_r ,
\end{align}
where $\bm H$ is an $(n_1 n_2 - s)$ square matrix given by
\begin{align} \label{equ:H}
    \bm H = {\bm S}_{\bar{\Omega}}^{\topnew} (\bm P_{\bm V_{\perp}} \otimes \bm P_{\bm U_{\perp}}) {\bm S}_{\bar{\Omega}} .
\end{align}
Then, $\norm{\bm X^{(k)} - \bm M}_F$ converge asymptotically at a linear rate
\begin{align} \label{equ:rho}
    \rho = 1 - \lambda_{\min}(\bm H) .
\end{align}
Specifically, for any $\epsilon>0$, $\norm{\bm X^{(k)} - \bm M}_F \leq \epsilon \norm{\bm X^{(0)} - \bm M}_F$ for all integer $k$ such that 
\begin{align} \label{equ:Ne}
    k \geq K(\epsilon) = \frac{\log(1/\epsilon)}{\log(1/(1-\lambda_{\min}(\bm H)))} + c ,
\end{align}
where $\tau = \frac{c_1 \norm{\bm X^{(0)} - \bm M}_F}{\sigma_r \lambda_{\min}(\bm H)}$ and
\begin{align*}
    c = &\frac{1}{\rho \log(1/\rho)} \Biggl( E_1\Bigl(\log\frac{1}{\rho+\tau(1-\rho)}\Bigr) - E_1\Bigl(\log\frac{1}{\rho}\Bigr) \\
    &+ \frac{1}{2} \cdot \log \biggl( \frac{\log(1\rho/\rho)}{\log \bigl(1/(\rho+\tau(1-\rho))\bigr)} \biggr) \Biggr) + 1 , \numberthis \label{equ:c3}
\end{align*}
with $E_1(t) = \int_t^\infty \frac{e^{-z}}{z}dz$ being the exponential integral \cite{milton1964handbook}.
\end{theorem}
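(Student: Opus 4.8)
\emph{Proof strategy.}
Write $\bm \Delta^{(k)} = \bm X^{(k)} - \bm M$ and $a_k = \norm{\bm \Delta^{(k)}}_F$. The idea is to compress the matrix iteration (\ref{equ:update}) into a scalar quadratic recursion for $a_k$ whose leading coefficient is exactly $\rho = 1-\lambda_{\min}(\bm H)$, and then to integrate that recursion to read off the asymptotic rate and the explicit iteration count. For the compression, since $\bm M = \P_{\bar{\Omega}}(\bm M) + \P_{\Omega}(\bm M)$, subtracting $\bm M$ from (\ref{equ:update}) gives $\bm \Delta^{(k+1)} = \P_{\bar{\Omega}}\bigl(\P_r(\bm X^{(k)}) - \bm M\bigr)$, and applying Proposition~\ref{prop:rankop} with $\bm \Delta = \bm \Delta^{(k)}$ yields
\begin{align*}
  \bm \Delta^{(k+1)} = \P_{\bar{\Omega}}\bigl(\bm \Delta^{(k)} - \bm P_{\bm U_{\perp}}\bm \Delta^{(k)}\bm P_{\bm V_{\perp}}\bigr) + \P_{\bar{\Omega}}\bigl(\bm R(\bm \Delta^{(k)})\bigr).
\end{align*}

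\emph{Scalarization via $\bm H$.}
Every iterate of (\ref{equ:update}) coincides with $\bm M$ on $\Omega$, and we take $\bm X^{(0)}$ likewise consistent with $\bm M$ on $\Omega$ (as for the zero-filled start $\bm X^{(0)} = \P_{\Omega}(\bm M)$), so $\bm \Delta^{(k)}$ is supported on $\bar{\Omega}$ for all $k \ge 0$; thus, with $\bm w^{(k)} = \bm S_{\bar{\Omega}}^{\topnew}\vect(\bm \Delta^{(k)})$ we have $\norm{\bm w^{(k)}}_2 = \norm{\bm \Delta^{(k)}}_F$ and $\vect(\bm \Delta^{(k)}) = \bm S_{\bar{\Omega}}\bm w^{(k)}$. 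Vectorizing the displayed identity by means of $\vect(\bm A\bm X\bm B)=(\bm B^{\topnew}\otimes\bm A)\vect(\bm X)$, Definition~\ref{def:S}, the symmetry of $\bm P_{\bm U_{\perp}}$ and $\bm P_{\bm V_{\perp}}$, and $\bm S_{\bar{\Omega}}^{\topnew}\bm S_{\bar{\Omega}}=\bm I$, and left-multiplying by $\bm S_{\bar{\Omega}}^{\topnew}$, gives
\begin{align*}
  \bm w^{(k+1)} = (\bm I - \bm H)\,\bm w^{(k)} + \bm S_{\bar{\Omega}}^{\topnew}\vect\bigl(\bm R(\bm \Delta^{(k)})\bigr),
\end{align*}
with $\bm H$ exactly as in (\ref{equ:H}). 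Because $\bm P_{\bm V_{\perp}}\otimes\bm P_{\bm U_{\perp}}$ is an orthogonal projection and $\bm S_{\bar{\Omega}}$ has orthonormal columns, $\bm 0 \preceq \bm H \preceq \bm I$, so $\bm H$ is symmetric with spectrum in $[0,1]$ and $\norm{\bm I - \bm H}_2 = 1-\lambda_{\min}(\bm H) = \rho$; in particular $\rho<1$, and (\ref{equ:ROC}) being non-vacuous, requires the tacit non-degeneracy $\lambda_{\min}(\bm H)>0$ on the pair $(\bm M,\Omega)$. Combined with the residual bound $\norm{\bm R(\bm \Delta)}_F \le (c_1/\sigma_r)\norm{\bm \Delta}_F^2$ of Proposition~\ref{prop:rankop}, this produces the scalar recursion $a_{k+1} \le \rho\,a_k + (c_1/\sigma_r)\,a_k^2$ for all $k\ge0$.

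\emph{Integrating the recursion.}
Condition (\ref{equ:ROC}) gives $(c_1/\sigma_r)a_0 < 1-\rho$, hence $a_{k+1} \le a_k\bigl(\rho + (c_1/\sigma_r)a_k\bigr) < a_k$: the positive sequence $\{a_k\}$ is strictly decreasing, so convergent with limit $0$, and since $a_{k+1}/a_k \le \rho + (c_1/\sigma_r)a_k \to \rho$ the decay is asymptotically geometric with ratio $\rho$ --- the rate (\ref{equ:rho}) (that this is the exact, not merely an upper, rate follows for generic $\bm X^{(0)}$ from a power-iteration argument: $\bm w^{(k)}/\norm{\bm w^{(k)}}_2$ aligns with a $\lambda_{\min}(\bm H)$-eigenvector of $\bm H$, forcing $\norm{\bm w^{(k+1)}}_2/\norm{\bm w^{(k)}}_2 \to \rho$). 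For the quantitative bound, put $h_j = (c_1/\sigma_r)a_j$, so that --- replacing the inequality by equality, which only enlarges the estimate --- $h_{j+1} = h_j(\rho + h_j)$ and
\begin{align*}
  \frac{a_k}{a_0} = \frac{h_k}{h_0} = \prod_{j=0}^{k-1}(\rho + h_j) = \rho^{\,k}\prod_{j=0}^{k-1}\Bigl(1 + \tfrac{h_j}{\rho}\Bigr),
\end{align*}
hence $\log(a_k/a_0) \le k\log\rho + \Sigma$ with $\Sigma := \sum_{j\ge0}\log(1+h_j/\rho)$. A crude geometric bound $h_j \le h_0\,q^{\,j}$ with $q = \rho + \tau(1-\rho) < 1$ already shows $\Sigma<\infty$ and depends only on $(\rho,\tau)$; a sharp estimate is obtained by comparing the series with the associated continuous-time recursion, under which the substitution $w = \rho(1+h/\rho)$ reduces the dominant part of $\Sigma$ to $\tfrac{1}{\rho}\int_\rho^q \tfrac{dw}{\log(1/w)} = \tfrac{1}{\rho}\bigl(E_1(\log(1/q)) - E_1(\log(1/\rho))\bigr)$, using the identity $E_1(\log(1/x)) = \int_0^x \tfrac{dt}{\log(1/t)}$; the remaining boundary and discretization terms give the $\tfrac12\log(\cdot)$ term and the $+1$, so $\Sigma \le c\log(1/\rho)$ with $c$ as in (\ref{equ:c3}). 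Then $a_k/a_0 \le \rho^{\,k}e^{c\log(1/\rho)} = \rho^{\,k-c}$, and $a_k \le \epsilon\,a_0$ whenever $\rho^{\,k-c}\le\epsilon$, i.e. for $k \ge K(\epsilon)$ as in (\ref{equ:Ne}).

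\emph{Main difficulty.}
The scalarization (first two parts) is routine once Proposition~\ref{prop:rankop} is in hand; the technical heart is the last step --- extracting from $a_{k+1} \le \rho a_k + (c_1/\sigma_r)a_k^2$ a \emph{sharp} iteration count, i.e. bounding the accumulated overshoot $\Sigma$ of $\{a_k\}$ over the pure geometric sequence $\rho^{\,k}$ precisely enough to recover the exponential-integral form of $c$, rather than the much cruder $\Theta\bigl(\tau/((1-\tau)\rho\log(1/\rho))\bigr)$-type constant that the one-line bound $\log(1+x)\le x$ would yield. A minor secondary point: for an initialization $\bm X^{(0)}$ not consistent with $\bm M$ on $\Omega$, the first iteration contracts only by the weaker factor $\sqrt{\rho}$ (the spectral norm of $\bm S_{\bar{\Omega}}^{\topnew}(\bm I - \bm P_{\bm V_{\perp}}\otimes\bm P_{\bm U_{\perp}})$ on a general input), so the argument is run most cleanly from the zero-filled start, after which the error stays supported on $\bar{\Omega}$.
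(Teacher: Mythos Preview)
Your proposal is correct and follows essentially the same route as the paper: derive the error recursion via Proposition~\ref{prop:rankop}, vectorize and restrict to $\bar\Omega$ to obtain $\bm w^{(k+1)} = (\bm I-\bm H)\bm w^{(k)} + (\text{quadratic remainder})$, bound by operator norm to get the scalar inequality $a_{k+1}\le\rho\,a_k+(c_1/\sigma_r)a_k^2$, and then analyze that recursion. The paper packages the first two steps as Lemmas~\ref{lem:error_recur} and~\ref{lem:error_ineq} and then, for the third step, simply invokes Theorem~1 of \cite{vu2021closed} as a black box with parameters $a_0=\norm{\bm E^{(0)}}_F$, $\rho=1-\lambda_{\min}(\bm H)$, $q=c_1/\sigma_r$; you instead sketch how that black box is opened, which is a welcome addition but is admittedly hand-wavy at the point where the continuous-time comparison is supposed to produce exactly the $E_1$ terms, the $\tfrac12\log(\cdot)$ correction, and the $+1$ in (\ref{equ:c3}) --- those details live in \cite{vu2021closed}, not here.

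One small remark on your side comment: the paper does not assume $\bm X^{(0)}$ is consistent with $\bm M$ on $\Omega$; it simply observes that $\bm E^{(k)}=\P_{\bar\Omega}(\bm E^{(k)})$ holds for all $k\ge 1$ and runs the vectorized recursion from there, so your extra hypothesis is unnecessary (and your claimed $\sqrt{\rho}$ contraction for the first step is not quite right either --- the operator $\bm S_{\bar\Omega}^{\topnew}(\bm I-\bm P_{\bm V_\perp}\otimes\bm P_{\bm U_\perp})$ acting on a general vector has norm at most $1$, not $\sqrt{\rho}$).
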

Theorem~\ref{theo:IHT} provides a closed-form expression of the linear convergence rate of IHTSVD for matrix completion. 
As can be seen in (\ref{equ:Ne}), the speed of convergence depends strongly on how close the smallest eigenvalue of $\bm H$ is to zero: as $\lambda_{\min}(\bm H)$ approaches $0$, the number of iterations needed to reach a relative accuracy of $\epsilon$, i.e., $K(\epsilon)$, grows to infinity. 
When $\lambda_{\min}(\bm H)=0$, the condition in (\ref{equ:ROC}) cannot be satisfied and hence, there is no linear convergence guarantee provided by our theorem in this case.
On the other hand, from (\ref{equ:H}), one can verify that all eigenvalues of $\bm H$ lie between $0$ and $1$ since the norm of either a projection matrix or a selection matrix is less than or equal to $1$. This combined with the aforementioned condition that $\lambda_{\min}(\bm H)>0$ ensures the linear convergence rate $\rho$ in (\ref{equ:rho}) belongs to $[0,1)$.
\begin{remark} \label{rmk:sr}
Theorem~\ref{theo:IHT} does not guarantee linear convergence when $\lambda_{\min}(\bm H)=0$. Interestingly, one such situation is when $\bm H$ is \textbf{rank-deficient}. Let us represent
\begin{align*}
    \bm H &= {\bm S}_{\bar{\Omega}}^{\topnew} (\bm V_{\perp} \otimes \bm U_{\perp}) (\bm V_{\perp} \otimes \bm U_{\perp})^{\topnew} {\bm S}_{\bar{\Omega}} \\
    &= \bm W \bm W^{\topnew} ,
\end{align*}
where $\bm W = {\bm S}_{\bar{\Omega}}^{\topnew} (\bm V_{\perp} \otimes \bm U_{\perp}) \in \R^{(n_1 n_2 - s) \times (n_1-r)(n_2-r)}$.
If $\bm W$ is a tall matrix, i.e., 
\begin{align} \label{equ:s_cond}
    s < (n_1+n_2-r)r ,
\end{align}
then it follows that $\bm H$ is rank-deficient  and $\lambda_{\min}(\bm H) = 0$. We note that in this case the number of sampled entries is less than the degrees of freedom of the problem. 
\end{remark}

\begin{remark}
When $s \geq (n_1+n_2-r)r$, it is possible that $\lambda_{\min}(\bm H) = 0$ for certain (adversarial) sampling patterns. For example, consider a $3 \times 2$ rank-$1$ matrix 
\begin{align*}
    \bm M = \begin{bmatrix} 1 & 0 \\ 0 & 0 \\ 0 & 0 \end{bmatrix} = \begin{bmatrix} 1 \\ 0 \\ 0 \end{bmatrix} \cdot \begin{bmatrix} 1 & 0 \end{bmatrix}^{\topnew} .
\end{align*}
One choice of the matrices $\bm U_\perp$ and $\bm V_\perp$ is
\begin{align*}
    \bm U_\perp = \begin{bmatrix} 0 & 0 \\ 1 & 0 \\ 0 & 1 \end{bmatrix} \text{ and } \bm V_\perp = \begin{bmatrix} 0 \\ 1 \end{bmatrix} .
\end{align*}
If we observe $s=4$ entries of the first two rows of $\bm M$, namely, $(1,1)$, $(1,2)$, $(2,1)$, and $(2,3)$, the selection matrix corresponding to the unobserved entries $(3,1)$ and $(3,2)$ is given by
\begin{align*}
     \bm S_{\bar{\Omega}}^{\topnew} = \begin{bmatrix} 0 & 0 & 1 & 0 & 0 & 0 \\ 0 & 0 & 0 & 0 & 0 & 1 \end{bmatrix} .
\end{align*}
Then, we have
\begin{align*}
    \bm H = {\bm S}_{\bar{\Omega}}^{\topnew} (\bm V_{\perp} \otimes \bm U_{\perp}) (\bm V_{\perp} \otimes \bm U_{\perp})^{\topnew} {\bm S}_{\bar{\Omega}} = \begin{bmatrix} 0 & 0 \\ 0 & 1 \end{bmatrix} 
\end{align*}
and $\lambda_{\min}(\bm H) = 0$.
While Theorem~\ref{theo:IHT} does not guarantee linear convergence of IHTSVD, one may realize that it is impossible to recover the last row of $\bm M$ in this case.
\end{remark}

\subsection{Proof of Theorem~\ref{theo:IHT}}

This section provides the proof of Theorem~\ref{theo:IHT}.
We starts by formulating the recursion on the error matrix from the update (\ref{equ:update}) and the linearization of the rank-$r$ projection:
\begin{lemma} \label{lem:error_recur}
Let us define the error matrix and its economy vectorized version, respectively, as
\begin{align*}
    \bm E^{(k)} = \bm X^{(k)} - \bm M \qquad \text{and} \qquad \bm e^{(k)} = {\bm S}_{\bar{\Omega}}^{\topnew} \vect(\bm E^{(k)}) .
\end{align*}
Then, we have
\begin{align} \label{equ:E}
    \bm E^{(k+1)} &= \P_{\bar{\Omega}} \bigl( \bm E^{(k)} - \bm P_{\bm U_{\perp}} \bm E^{(k)} \bm P_{\bm V_{\perp}} + \bm R(\bm E^{(k)}) \bigr) 
\end{align}
and
\begin{align} \label{equ:e}
    \bm e^{(k+1)} &= \bigl(\bm I - {\bm S}_{\bar{\Omega}}^{\topnew} (\bm P_{\bm V_{\perp}} \otimes \bm P_{\bm U_{\perp}}) {\bm S}_{\bar{\Omega}} \bigr) \bm e^{(k)} + \bm r \bigl( \bm e^{(k)} \bigr) , 
\end{align}
where $\bm R(\cdot)$ is the residual defined in Proposition~\ref{prop:rankop} and $$\bm r (\bm e) = {\bm S}_{\bar{\Omega}}^{\topnew} \vect \Bigl(\bm R \bigl({\vect}^{-1}({\bm S}_{\bar{\Omega}} \bm e) \bigr) \Bigr) \quad \text{ for } \bm e \in \R^{n_1 n_2-s} .$$
Here we recall that $\vect^{-1}(\cdot)$ is the inverse vectorization operator such that $(\vect^{-1} \circ \vect)$ is identity.
\end{lemma}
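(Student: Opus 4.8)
The plan is to unwind the update (\ref{equ:update}), insert the first-order expansion of the rank-$r$ projection from Proposition~\ref{prop:rankop}, and then pass to the economy vectorized variables by exploiting that $\bm S_{\bar{\Omega}}$ is an isometry onto the off-support block. First I would establish (\ref{equ:E}). Subtracting $\bm M$ from both sides of (\ref{equ:update}) and writing $\bm M = \P_{\Omega}(\bm M) + \P_{\bar{\Omega}}(\bm M)$, the linearity of $\P_{\bar{\Omega}}$ gives
\[
\bm E^{(k+1)} = \P_{\bar{\Omega}}\bigl(\P_r(\bm X^{(k)})\bigr) + \P_{\Omega}(\bm M) - \bm M = \P_{\bar{\Omega}}\bigl(\P_r(\bm X^{(k)}) - \bm M\bigr) .
\]
Since $\bm X^{(k)} = \bm M + \bm E^{(k)}$, Proposition~\ref{prop:rankop} applied with $\bm \Delta = \bm E^{(k)}$ yields $\P_r(\bm X^{(k)}) - \bm M = \bm E^{(k)} - \bm P_{\bm U_{\perp}} \bm E^{(k)} \bm P_{\bm V_{\perp}} + \bm R(\bm E^{(k)})$, and plugging this into the previous display is exactly (\ref{equ:E}).

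Next I would vectorize (\ref{equ:E}) to obtain (\ref{equ:e}), using two standard identities: $\vect(\P_{\bar{\Omega}}(\bm Y)) = \bm S_{\bar{\Omega}} \bm S_{\bar{\Omega}}^{\topnew} \vect(\bm Y)$ (the analogue of Definition~\ref{def:S} for $\bar{\Omega}$) and $\vect(\bm A \bm Y \bm B) = (\bm B^{\topnew} \otimes \bm A)\vect(\bm Y)$ applied to $\bm P_{\bm U_{\perp}} \bm E^{(k)} \bm P_{\bm V_{\perp}}$ with both projectors symmetric. This turns (\ref{equ:E}) into
\[
\vect(\bm E^{(k+1)}) = \bm S_{\bar{\Omega}} \bm S_{\bar{\Omega}}^{\topnew} \Bigl[ \bigl( \bm I - \bm P_{\bm V_{\perp}} \otimes \bm P_{\bm U_{\perp}} \bigr) \vect(\bm E^{(k)}) + \vect\bigl(\bm R(\bm E^{(k)})\bigr) \Bigr] .
\]
Left-multiplying by $\bm S_{\bar{\Omega}}^{\topnew}$ and using $\bm S_{\bar{\Omega}}^{\topnew} \bm S_{\bar{\Omega}} = \bm I_{n_1 n_2 - s}$ collapses the leading $\bm S_{\bar{\Omega}} \bm S_{\bar{\Omega}}^{\topnew}$ and leaves $\bm e^{(k+1)}$ on the left.

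The one step that needs care — and the only piece of actual content beyond bookkeeping — is replacing $\vect(\bm E^{(k)})$ by $\bm S_{\bar{\Omega}} \bm e^{(k)}$ on the right-hand side, which requires $\bm E^{(k)}$ to be supported on $\bar{\Omega}$. This holds because the update keeps $\bm X^{(k)}$ equal to $\bm M$ on $\Omega$, i.e.\ $\P_{\Omega}(\bm E^{(k)}) = \bm 0$ (immediate from (\ref{equ:update}), and also visible from (\ref{equ:E}) for the iterates thereafter), so $\vect(\bm E^{(k)}) = \bm S_{\bar{\Omega}} \bm S_{\bar{\Omega}}^{\topnew} \vect(\bm E^{(k)}) = \bm S_{\bar{\Omega}} \bm e^{(k)}$ and likewise $\bm E^{(k)} = {\vect}^{-1}(\bm S_{\bar{\Omega}} \bm e^{(k)})$. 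With these substitutions the linear term becomes $\bm S_{\bar{\Omega}}^{\topnew}\bigl(\bm I - \bm P_{\bm V_{\perp}} \otimes \bm P_{\bm U_{\perp}}\bigr)\bm S_{\bar{\Omega}} \bm e^{(k)} = (\bm I - \bm H)\bm e^{(k)}$ with $\bm H$ as in (\ref{equ:H}), and the residual term becomes $\bm S_{\bar{\Omega}}^{\topnew} \vect\bigl(\bm R({\vect}^{-1}(\bm S_{\bar{\Omega}} \bm e^{(k)}))\bigr) = \bm r(\bm e^{(k)})$ by the definition of $\bm r$, giving (\ref{equ:e}). I do not expect a genuine obstacle here: once Proposition~\ref{prop:rankop} and the isometry property of $\bm S_{\bar{\Omega}}$ are in hand, the lemma is a direct computation.
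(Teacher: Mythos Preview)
Your proposal is correct and follows essentially the same route as the paper's proof: derive (\ref{equ:E}) by subtracting $\bm M$ from the update and inserting Proposition~\ref{prop:rankop}, then vectorize using the Kronecker identity and the isometry $\bm S_{\bar{\Omega}}^{\topnew}\bm S_{\bar{\Omega}}=\bm I$, with the key observation that $\bm E^{(k)}$ is supported on $\bar{\Omega}$ for $k\geq 1$ so that $\vect(\bm E^{(k)})=\bm S_{\bar{\Omega}}\bm e^{(k)}$. The paper's argument is identical in substance and order.
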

\noindent Note that $\bm E^{(k)}$ belongs to the set of matrices supported in $\Omega$ and hence, $\norm{\bm E^{(k)}}_F = \norm{\bm e^{(k)}}_2$.
Next, using the definition of the operator norm, one can obtain the following bound on the norm of the error matrix:
\begin{lemma} \label{lem:error_ineq}
The Frobenius norm of the error matrix satisfies
\begin{align} \label{equ:Einq}
    \norm{\bm E^{(k+1)}}_F \leq \bigl(1-\lambda_{\min}(\bm H)\bigr) \norm{\bm E^{(k)}}_F + \frac{c_1}{\sigma_r} \norm{\bm E^{(k)}}_F^2 .
\end{align}
\end{lemma}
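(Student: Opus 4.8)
The plan is to start from the economy vectorized recursion (\ref{equ:e}) of Lemma~\ref{lem:error_recur}, namely $\bm e^{(k+1)} = (\bm I - \bm H)\bm e^{(k)} + \bm r(\bm e^{(k)})$, apply the triangle inequality for the Euclidean norm, and estimate the linear term and the residual term separately. Since the matrix recursion (\ref{equ:E}) exhibits $\bm E^{(k+1)}$ as $\P_{\bar{\Omega}}$ of something, the error matrix at step $k+1$ is supported in $\bar{\Omega}$, so $\norm{\bm E^{(k+1)}}_F = \norm{{\bm S}_{\bar{\Omega}}^{\topnew}\vect(\bm E^{(k+1)})}_2 = \norm{\bm e^{(k+1)}}_2$; hence it suffices to bound $\norm{(\bm I-\bm H)\bm e^{(k)}}_2 + \norm{\bm r(\bm e^{(k)})}_2$.

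For the linear term, I would use that $\bm H$ is real symmetric positive semidefinite (writing $\bm H = \bm W \bm W^{\topnew}$ with $\bm W = {\bm S}_{\bar{\Omega}}^{\topnew}(\bm V_{\perp}\otimes \bm U_{\perp})$, as in Remark~\ref{rmk:sr}) and that all its eigenvalues lie in $[0,1]$: semidefiniteness gives $\lambda_i(\bm H)\ge 0$, while $\norm{\bm H}_2 \le \norm{{\bm S}_{\bar{\Omega}}}_2^2\,\norm{\bm P_{\bm V_{\perp}}\otimes\bm P_{\bm U_{\perp}}}_2 = 1$ gives $\lambda_i(\bm H)\le 1$, using that ${\bm S}_{\bar{\Omega}}$ has orthonormal columns and that the Kronecker product of two orthogonal projections is again an orthogonal projection. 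Therefore $\bm I-\bm H$ is symmetric with eigenvalues $1-\lambda_i(\bm H)\in[0,1]$, so $\norm{\bm I-\bm H}_2 = 1-\lambda_{\min}(\bm H)$, and $\norm{(\bm I-\bm H)\bm e^{(k)}}_2 \le (1-\lambda_{\min}(\bm H))\,\norm{\bm e^{(k)}}_2 = (1-\lambda_{\min}(\bm H))\,\norm{\bm E^{(k)}}_F$.

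For the residual term, I would unwind $\bm r(\bm e^{(k)}) = {\bm S}_{\bar{\Omega}}^{\topnew}\vect\bigl(\bm R(\vect^{-1}({\bm S}_{\bar{\Omega}}\bm e^{(k)}))\bigr)$. Using $\norm{{\bm S}_{\bar{\Omega}}^{\topnew}}_2 = 1$ together with the isometry between $\vect(\cdot)$ and the Frobenius norm, $\norm{\bm r(\bm e^{(k)})}_2 \le \norm{\bm R(\vect^{-1}({\bm S}_{\bar{\Omega}}\bm e^{(k)}))}_F$. Since ${\bm S}_{\bar{\Omega}}\bm e^{(k)} = {\bm S}_{\bar{\Omega}}{\bm S}_{\bar{\Omega}}^{\topnew}\vect(\bm E^{(k)}) = \vect(\P_{\bar{\Omega}}(\bm E^{(k)}))$, the argument of $\bm R$ is $\P_{\bar{\Omega}}(\bm E^{(k)})$, and Proposition~\ref{prop:rankop} applied to $\bm\Delta = \P_{\bar{\Omega}}(\bm E^{(k)})$ together with $\norm{\P_{\bar{\Omega}}(\bm E^{(k)})}_F \le \norm{\bm E^{(k)}}_F$ yields $\norm{\bm r(\bm e^{(k)})}_2 \le \frac{c_1}{\sigma_r}\norm{\bm E^{(k)}}_F^2$. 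Adding the two estimates gives (\ref{equ:Einq}).

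The argument is largely routine once Lemma~\ref{lem:error_recur} and Proposition~\ref{prop:rankop} are available; the one place that genuinely needs care is the spectral bound $\lambda_{\max}(\bm H)\le 1$, which is what makes $\norm{\bm I-\bm H}_2$ collapse to $1-\lambda_{\min}(\bm H)$ rather than $\max\{1-\lambda_{\min}(\bm H),\,\lambda_{\max}(\bm H)-1\}$ — this is exactly where the structure of $\bm H$ as a principal compression (by ${\bm S}_{\bar{\Omega}}$) of the orthogonal projection $\bm P_{\bm V_{\perp}}\otimes\bm P_{\bm U_{\perp}}$ enters. I would also remark that the $\P_{\bar{\Omega}}$-contraction used above makes the right-hand side valid at every step, including $k=0$, without needing $\bm E^{(k)}$ itself to be supported in $\bar{\Omega}$.
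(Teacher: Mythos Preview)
Your proof is correct and follows essentially the same route as the paper: apply the triangle inequality to the vectorized recursion (\ref{equ:e}), bound the linear term via $\norm{\bm I-\bm H}_2 = 1-\lambda_{\min}(\bm H)$ using that the eigenvalues of $\bm H$ lie in $[0,1]$, and bound the residual via Proposition~\ref{prop:rankop}. Your treatment is slightly more explicit in two places---you spell out why $\lambda_{\max}(\bm H)\le 1$ from the projection/selection structure, and you route the residual through $\P_{\bar{\Omega}}(\bm E^{(k)})$ rather than assuming $\bm E^{(k)}$ is already supported in $\bar{\Omega}$---but these are refinements of the same argument, not a different approach.
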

\noindent The nonlinear difference equation (\ref{equ:Einq}) has been well-studied in the stability theory of difference equations \cite{bellman2008stability,polyak1964some,vu2021closed}. In fact, our theorem follows directly on applying Theorem~1 in \cite{vu2021closed} to (\ref{equ:Einq}), with $a_0 = \norm{\bm E^{(0)}}_F$, $\rho=1-\lambda_{\min}(\bm H)$, and $q={c_1}/{\sigma_r}$.
The proofs of Lemmas~\ref{lem:error_recur} and \ref{lem:error_ineq} are given in Appendix~\ref{appdx:IHT}.

\section{Convergence of IHTSVD for Large-Scale Matrix Completion}
\label{sec:asymptotic}

In this section, we study the convergence of IHTSVD for large-scale matrix completion, a setting of practical interest in the rise of big data. 
Using recent results in random matrix theory, we show that, as its dimensions grow to infinity, the spectral distribution of $\bm H$ converges almost surely to a deterministic distribution with a bounded support. 
Consequently, we propose a large-scale asymptotic estimate of the linear convergence rate of IHTSVD that is a closed-form expression of the relative rank and the sampling rate. 

\subsection{Overview}
We are interested in the asymptotic setting in which the size of $\bm M$ grows to infinity, i.e., $m = \min\{n_1,n_2\} \to \infty$. Let us assume that the ratio $n_1/n_2$ remains to be a non-zero constant as $m \to \infty$.
In addition, we introduce two concepts that are the normalization of the degrees of freedom and the number of measurements:
\begin{definition}[Relative rank] \label{assp:rho_r}
The rank $r$ increases as $m \to \infty$ such that the relative rank remains to be a constant
\begin{align} \label{equ:rho_r}
    \rho_r = 1-\sqrt{\bigl( 1 - \frac{r}{n_1} \bigr) \bigl( 1 - \frac{r}{n_2} \bigr)} \in (0,1] .
\end{align}
\end{definition}
\begin{definition}[Sampling rate] \label{assp:rho_s}
The number of observations increases as $m \to \infty$ such that the sampling rate remains to be a constant
\begin{align} \label{equ:rho_s}
    \rho_s = \frac{s}{n_1 n_2} \in (0,1] .
\end{align}
\end{definition}
\noindent When $\rho_s < 1-(1-\rho_r)^2$, we recover the case in Remark~\ref{rmk:sr} where the number of measurements is less than the degrees of freedom. As far as the local linear rate of IHTSVD is concerned, we only consider the case $\rho_s \geq 1-(1-\rho_r)^2$.
\begin{remark}
When $r=m$, we have $\rho_r = 1$. Moreover, when $n_1=n_2=m$, the relative rank is exactly the ratio $r/m$. As can be seen below, the proposed definition of the relative rank incorporates both dimensions of $\bm M$ to enable the compact representation of $\rho$ in terms of $\rho_r$ and $\rho_s$. 
\end{remark}
\noindent We are in position to state our result on the asymptotic behavior of the linear rate $\rho$ in large-scale matrix completion:
\begin{theorem}[Informal] \label{theo:rho_asymp_informal}
For $\rho_s > 1-(1-\rho_r)^2$, the linear convergence rate $\rho$ of IHTSVD approaches 
\begin{align} \label{equ:p_infty}
    \rho_\infty = 1 - \Bigl( \sqrt{(1-\rho_r)^2 \rho_s} - \sqrt{\rho_r(2-\rho_r)(1-\rho_s)} \Bigr)^2 ,
\end{align}
as $m \to \infty$.
\end{theorem}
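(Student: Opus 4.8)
The plan is to identify $\lambda_{\min}(\bm H)$ with the \emph{left edge} of a limiting spectral distribution of $\bm H$ and then to evaluate that edge in closed form, so that by (\ref{equ:rho}) we get $\rho = 1-\lambda_{\min}(\bm H)\to 1-\lambda_-$. The starting point is the factorization already used in Remark~\ref{rmk:sr}: with $\bm P \coloneqq \bm P_{\bm V_{\perp}}\otimes\bm P_{\bm U_{\perp}}$, an orthogonal projection on $\R^{n_1 n_2}$ of rank $(n_1-r)(n_2-r)$, and $\bm Q_0 \coloneqq \bm S_{\bar\Omega}\bm S_{\bar\Omega}^\top$, the coordinate projection onto the unobserved entries of rank $n_1 n_2 - s$, one has $\bm H = \bm S_{\bar\Omega}^\top \bm P \bm S_{\bar\Omega}$, so the nonzero eigenvalues of $\bm H$ coincide with those of $\bm P\bm Q_0\bm P$ --- equivalently, with the squared cosines of the principal angles between $\mathrm{range}(\bm S_{\bar\Omega})$ and $\mathrm{range}(\bm P)$. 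Under the standing hypothesis $\rho_s > 1-(1-\rho_r)^2$ (Definitions~\ref{assp:rho_r}--\ref{assp:rho_s}) the relative rank of $\bm P$ exceeds that of $\bm Q_0$ in the limit, so $\bm H$ is generically of full rank $n_1 n_2 - s$ and $\lambda_{\min}(\bm H)$ is the smallest of these squared cosines.

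The core ingredient is a random matrix result (to be set up as a lemma in this section): under the random orthogonal model for $\bm M$ and uniform sampling for $\Omega$, the empirical spectral distribution of $\bm H$ converges almost surely, as $m\to\infty$, to the nontrivial component of the free multiplicative convolution $\mathrm{Bern}(a)\boxtimes\mathrm{Bern}(b)$, where $\mathrm{Bern}(p)=(1-p)\delta_0+p\,\delta_1$ and $a=\lim\frac{(n_1-r)(n_2-r)}{n_1 n_2}=(1-\rho_r)^2$, $b=\lim\frac{n_1 n_2-s}{n_1 n_2}=1-\rho_s$ --- i.e.\ the Wachter law, the limiting law of squared cosines of principal angles between two uniformly random subspaces. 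An $S$-transform computation ($S_{\mathrm{Bern}(p)}(z)=\frac{1+z}{p+z}$, together with $S_{\mu\boxtimes\nu}=S_\mu S_\nu$) shows its continuous part is supported on $[\lambda_-,\lambda_+]$ with $\lambda_\pm=\big(\sqrt{a(1-b)}\pm\sqrt{b(1-a)}\big)^2$. One then also needs the companion no-outlier statement --- that, almost surely, no eigenvalue of $\bm H$ eventually lies below $\lambda_--\varepsilon$ for any $\varepsilon>0$ --- which, combined with the accumulation of mass at $\lambda_-$ coming from the distributional limit, gives $\lambda_{\min}(\bm H)\to\lambda_-$ almost surely.

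It then remains to simplify. Using $1-(1-\rho_r)^2=\rho_r(2-\rho_r)$ we have $a(1-b)=(1-\rho_r)^2\rho_s$ and $b(1-a)=\rho_r(2-\rho_r)(1-\rho_s)$, hence
\begin{align*}
\lambda_- = \Bigl(\sqrt{(1-\rho_r)^2\rho_s}-\sqrt{\rho_r(2-\rho_r)(1-\rho_s)}\Bigr)^{2},
\end{align*}
which is positive since $a>b$ implies $a(1-b)>b(1-a)$; therefore $\rho=1-\lambda_{\min}(\bm H)\to 1-\lambda_-=\rho_\infty$ as claimed in (\ref{equ:p_infty}).

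The main obstacle is the random matrix input of the second paragraph. The projection $\bm P$ is a Kronecker product of two independent Haar projections on $\R^{n_1}$ and $\R^{n_2}$ rather than a single Haar-distributed projection on $\R^{n_1 n_2}$, so the usual rotational-invariance route to the Wachter law (principal angles between a fixed subspace and a uniformly random one) does not apply directly. What must be shown is that $\bm P$ and the random coordinate projection $\bm Q_0$ are nonetheless asymptotically free --- e.g.\ through a moment/non-crossing-partition computation exploiting the delocalization of the eigenvectors $\bm v_i\otimes\bm u_j$ of $\bm P$ --- so that the free-probabilistic prediction is legitimate; and, separately, one must establish the edge (no-outlier) estimate, which is the technically delicate step needed to upgrade convergence in distribution of the spectrum to convergence of the single eigenvalue $\lambda_{\min}(\bm H)$.
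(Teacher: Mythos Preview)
Your proposal is correct in spirit and arrives at the same Wachter-law support $[\lambda_-,\lambda_+]$ as the paper, but the route differs. The paper does not invoke free multiplicative convolution or $S$-transforms; instead it appeals to a result of \cite{raich2016eigenvalue} (Proposition~\ref{prop:SSP14}) stating that for any sequence of semi-orthogonal matrices $\{\bm W_q^n\}$ satisfying a \emph{concentration} property (Definition~\ref{def:concentrated}), the ESD of $\bm H_n=\bm W_{pq}^n(\bm W_{pq}^n)^\top$ converges to the MANOVA law with the same $\lambda^\pm$. The Kronecker-structure obstacle you flag is dispatched by citing Lemma~3.1 of \cite{farrell2013local}, which shows that the Kronecker product of two Haar-distributed orthogonal matrices inherits this concentration property (Lemma~\ref{lem:kron_uni}); this plays the role of the asymptotic-freeness verification you propose. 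Your $S$-transform derivation is a cleaner, self-contained route to the same density, at the cost of having to verify asymptotic freeness of the Kronecker projection $\bm P$ against the random coordinate projection $\bm Q_0$ --- which amounts to essentially the same moment control as the concentration condition.

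Crucially, the paper does \emph{not} close the edge gap you identify: Theorem~\ref{theo:rho_asymp} (the formal version) only asserts almost-sure convergence of the ESD, and the no-outlier statement $\lambda_{\min}(\bm H)\to\lambda^-$ is explicitly left as Conjecture~\ref{conj:rho_asymp}. So your assessment that the smallest-eigenvalue step is ``technically delicate'' matches the paper's own position --- Theorem~\ref{theo:rho_asymp_informal} is labeled informal precisely because this step is not proved.
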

\noindent The formal statement of our result is given later in Theorem~\ref{theo:rho_asymp}.
Note that $\rho_\infty$ is independent of the structure of the solution matrix $\bm M$ and the sampling set $\Omega$. Moreover, it depends only on the relative rank and the sampling rate. Figure~\ref{fig:asymptotic101} depicts the contour plot of $\rho_\infty$ as a function of $\rho_r$ and $\rho_s$. It can be seen that for a fixed value of $\rho_r$, the asymptotic rate decreases towards $0$ as the number of observed entries increases. This matches with the intuition that more information leads to faster convergence. Conversely, for a fixed value of $\rho_s$, the algorithm converges slower as the rank of the matrix increases, due to the increasing uncertainty (i.e., more degrees of freedom) in the set $\bar{\Omega}$.
On the boundary where $\rho_s = 1 - (1-\rho_r)^2$, there is no linear convergence predicted by our theory since $\rho_\infty = 1$. In this case, we recall that the number of observed entries equals the degrees of freedom of the problem.
\begin{figure}
    \centering
    \includegraphics[scale=.6]{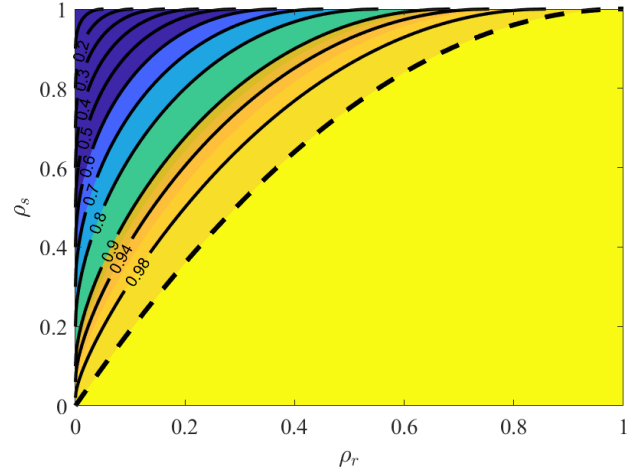}
    \caption{Contour plot of $\rho_\infty$ as a 2-D function of $\rho_r$ and $\rho_s$ given by (\ref{equ:p_infty}). The isoline at which $\rho_\infty=1$ is represented by the dashed line. The yellow region below this isoline corresponds to the under-determined setting $\rho_s<1-(1-\rho_r)$. }
    \label{fig:asymptotic101}
\end{figure}

Our technique relies on recent results in random matrix theory to exploit the special structure of $\bm H$. First, when $n_1/n_2$ remains constant, it holds that $n=n_1n_2 \to \infty$ as $m \to \infty$.
Then, $\bm H$ can be viewed as an element of a sequence of matrices of form
\begin{align} \label{equ:Hn}
    \bm H_n = \bm W_{pq}^{n} (\bm W_{pq}^{n})^{\topnew} ,
\end{align}
where $\bm W_{pq}^{n} \in \R^{pn_1n_2 \times qn_1n_2}$ is a truncation of the orthogonal matrix $\bm W^{n} = \bm V^{n_2} \otimes \bm U^{n_1}$, for $\bm U^{n_1}$ and $\bm V_{\perp}^{n_2}$  orthogonal matrices of dimensions $n_1 \times n_1$ and $n_2 \times n_2$, respectively, and
\begin{align*}
    p &= \frac{n_1 n_2 - s}{n_1n_2} = 1-\rho_s , \\ 
    q &= \frac{(n_1-r)(n_2-r)}{n_1n_2} = (1-\rho_r)^2 .
\end{align*}
As $n$ grows to infinity, we are interested in finding the limit (or even the limiting distribution) of the smallest eigenvalue of $\bm H_n$, which is a random truncation of the Kronecker product of two large dimensional semi-orthogonal matrices.

\subsection{Truncations of Large Dimensional Orthogonal Matrices}

\begin{figure*}
    \centering
    \begin{subfigure}[b]{0.49\textwidth}
        \centering
        \includegraphics[width=\textwidth]{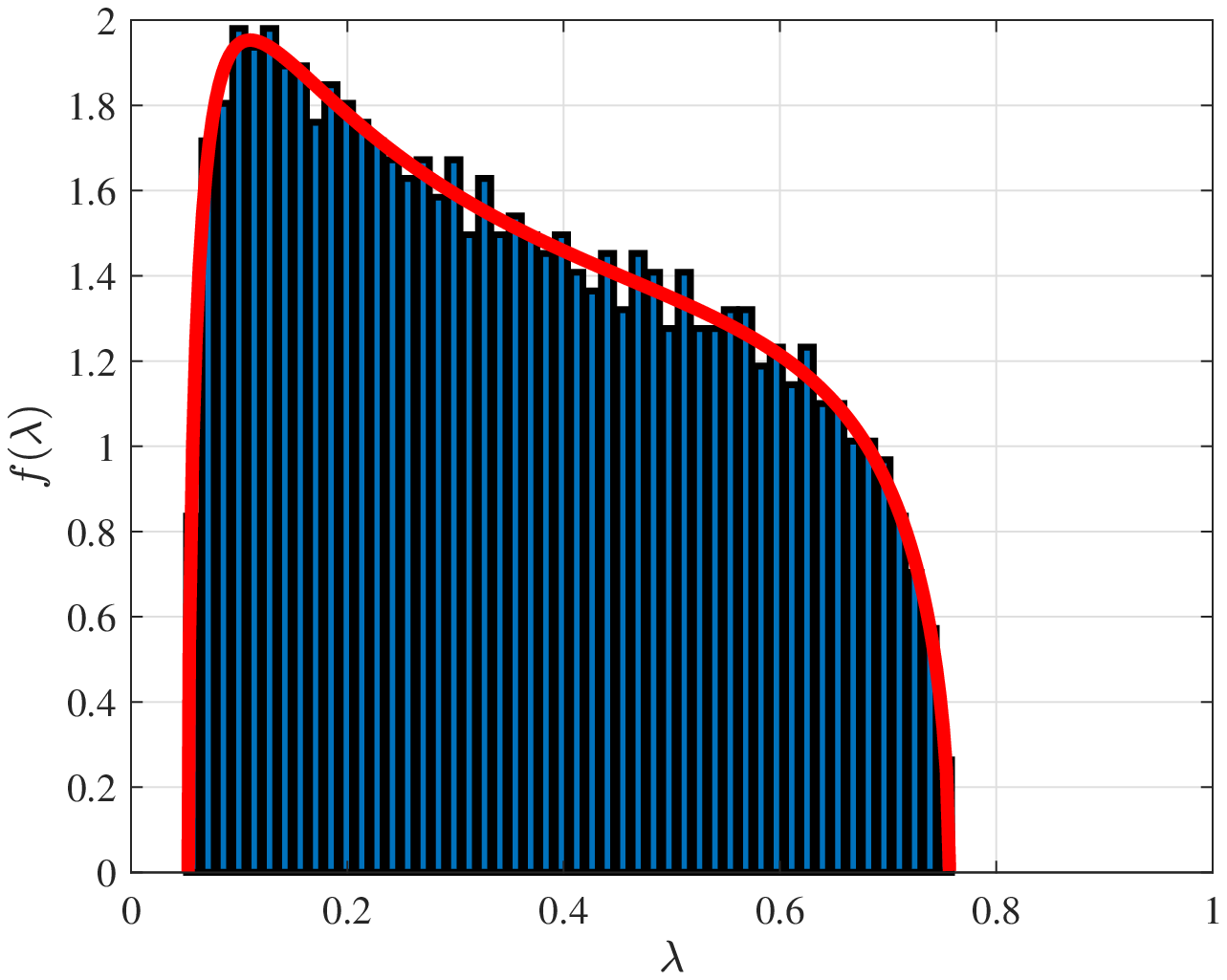}
        \caption{}
    \end{subfigure}
    \begin{subfigure}[b]{0.49\textwidth}
        \centering
        \includegraphics[width=\textwidth]{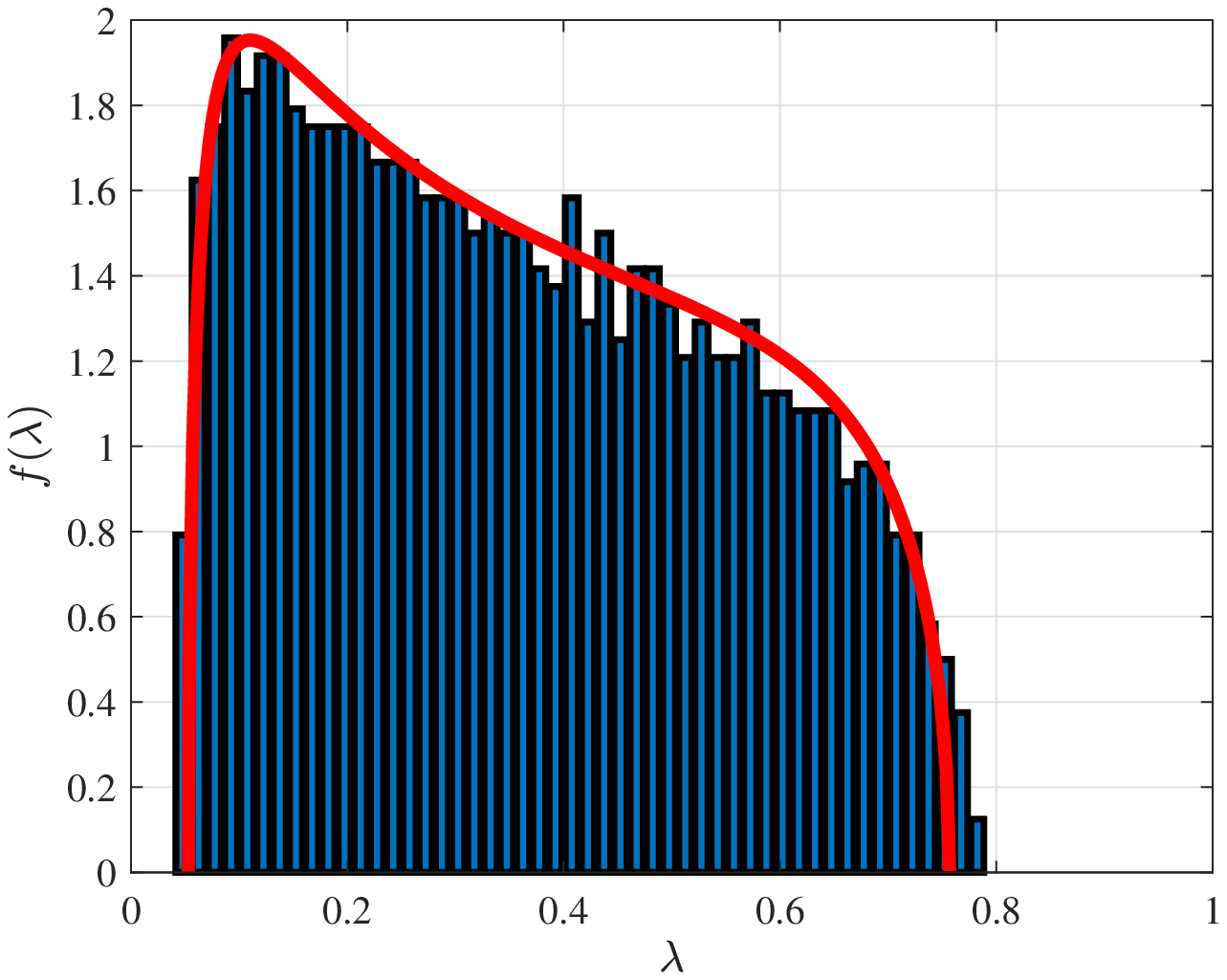}
        \caption{}
    \end{subfigure}
    \caption{Scaled histogram and the limiting ESD of $\bm H_n = \bm W_{pq}^{n} (\bm W_{pq}^{n})^{\topnew}$, where $\bm W_{pq}^{n}$ is the $pn \times qn$ upper-left corner of an $n \times n$ orthogonal matrix $\bm W_n$, for $n=10000$, $p=0.16$, and $q=0.36$. In (a), $\bm W_n$ is the orthogonal factor in the QR factorization of a $10000 \times 10000$ random matrix with $i.i.d$ standard normal entries. In (b), $\bm W_n = \bm Q_1 \otimes \bm Q_2$, where $\bm Q_1$ and $\bm Q_2$ are the orthogonal factors in the QR factorization of two independent $100 \times 100$ random matrices with $i.i.d$ standard normal entries. The histograms with $50$ bins (blue) are scaled by a factor of $1/pnw$, where $w$ is the bin width. The limiting ESD (red) is generated by (\ref{equ:ESD}). It can be seen that the histogram in (a) match the limiting ESD better than the histogram in (b).}
    \label{fig:ESD}
\end{figure*}

Random matrix theory studies the asymptotic behavior of eigenvalues of matrices with entries drawn randomly from various matrix ensembles such as Gaussian orthogonal ensemble (GOE), Wishart ensemble, MANOVA ensemble \cite{edelman2005random}. The closest random matrix ensemble to our matrix ensemble $\{\bm H_n\}_{n \in {\mathbb N}^+}$ is the MANOVA ensemble in which truncations of large dimensional Haar orthogonal matrices are considered. Here we recall that the Haar measure provides a uniform distribution over the set of all $n \times n$ orthogonal matrices $\mathbb{O}(n)$. Indeed, it is a unique translation-invariant probability measure on $\mathbb{O}(n)$.
If we assume that the matrix $\bm M$ follows a random orthogonal model \cite{candes2009exact}, then $\bm U_{\perp}$ and $\bm V_{\perp}$ are essentially sub-matrices of Haar orthogonal matrices in $\mathbb{O}(n_1)$ and $\mathbb{O}(n_2)$, respectively, and $\{\bm H_n\}_{n \in {\mathbb N}^+}$ is a sequence of truncations of the Kronecker product of two Haar orthogonal matrices.

There have been certain theoretical works on truncations of Haar invariant matrices in the literature. In 1980, Wachter \cite{wachter1980limiting} established the limiting distribution of the eigenvalues in the MANOVA ensemble. Later on, the density function of the eigenvalues of such matrix has been shown to be the same as that of a Jacobi matrix \cite{capitaine2004asymptotic,collins2005product,forrester2006quantum}. Shortly afterward, Johnstone proved the Tracy-Widom behavior of the largest eigenvalue in \cite{johnstone2008multivariate}. More recently, Farrell and Nadakuditi relaxed the constraint on the uniform (Haar) distribution of the orthogonal matrix considered the Kronecker products of Haar-distributed orthogonal matrices, which is similar to our matrix completion setting in this paper. The authors showed that the limiting density of their truncations remains the same as the original case without Kronecker products. Further results on the eigenvalue distribution of truncations of Haar orthogonal matrices were also given in \cite{zyczkowski2000truncations,jiang2009approximation,dong2012circular}.
To the best of our knowledge, no result has been shown for the limiting behavior of the smallest eigenvalue of random MANOVA matrices.

In our context, we leverage the recent result in \cite{raich2016eigenvalue}, which assumes the randomness on the truncation rather than the orthogonal matrix. This variant, while differs from the classic MANOVA ensemble in random matrix theory, is well-suited to the setting of matrix completion. Let us begin with the following definition of the empirical spectral distribution:
\begin{definition}
Let $\bm H_n$ be an $n \times n$ real symmetric matrix with eigenvalues $\lambda_1,\ldots,\lambda_n$. The \textbf{empirical spectral distribution (ESD)} of $\bm H_n$, denoted by $\mu_{\bm H_n}$, is the probability measure which puts equal mass at each of the eigenvalues of $\bm H_n$:
\begin{align*}
    \mu_{\bm H_n} \triangleq \frac{1}{n} \sum_{i=1}^n \delta_{\lambda_i} ,
\end{align*}
where $\delta_\lambda$ is the Dirac mass at $\lambda$.
\end{definition}
\noindent Next, we define the concepts of a sequence of row sub-sampled matrices and the concentration property:
\begin{definition} \label{def:row}
For each $n \in {\mathbb N}^+$, consider the $n \times qn$ matrix $\bm W_{q}^{n} = [\bm w_1^n,\ldots, \bm w_n^n]^{\topnew}$, where $\bm w_i^n \in \R^{qn}$ and $q$ is a constant in $(0,1)$.
Let $P_n$ be a $pn$-permutation of $[n]$ selected uniformly at random, for $p$ is a constant in $(0,1)$, and $\bm W_{pq}^{n} \in \R^{pn \times qn}$ be the random matrix obtained by selecting the corresponding set of $pn$ rows from $\bm W_{q}^{n}$. Then, the sequence $\{ \bm W_{q}^{n} \}_{n \in {\mathbb N}^+}$ is called \textbf{a sequence of $q$-tall matrices}, and the sequence $\{ \bm W_{pq}^{n} \}_{n \in {\mathbb N}^+}$ is called \textbf{a sequence of row sub-sampled matrices} of $\{ \bm W_{q}^{n} \}_{n \in {\mathbb N}^+}$.
\end{definition}

\begin{definition} \label{def:concentrated}
Given the setting in Definition~\ref{def:row}, for each $j \in P_n$, denote $P_n^j = P_n \setminus \{j\}$. In addition, for $z \in {\mathbb C}$, define 
\begin{align*}
    \bm R_j(z) = \Bigl(\sum_{i \in P_n^j} \bm w_i^n (\bm w_i^n)^{\topnew} - z \bm I_{qn}\Bigr)^{-1} . 
\end{align*}
Then, the sequence $\{ \bm W_{q}^{n} \}_{n \in {\mathbb N}^+}$ is \textbf{concentrated} if and only if for any $j \in P_n$ and $z \in {\mathbb C}$, we have
\begin{align} \label{equ:concentrated}
    (\bm w_j^n)^{\topnew} \bm R_j(z) \bm w_j^n - {\mathbb E}_{j \mid P_n^j} \bigl[ (\bm w_j^n)^{\topnew} \bm R_j(z) \bm w_j^n \bigr] \overset{\text{p}}{\to} 0 .
\end{align}
\end{definition}
\noindent In the following, we consider examples of sequences of matrices that are concentrated, as well as an example of the sequence of incoherent matrices that are \textbf{not} concentrated.
\begin{example} \label{eg:concentrated}
Random settings:\footnote{The detail of this example is provided in the Supplementary Material.}
\begin{enumerate}
    \item The sequence of $q$-tall matrices $\{ \bm A_{q}^{n} \}_{n \in {\mathbb N}^+}$, where the entries of $\bm A_{q}^{n}$ are $i.i.d$ $\mathcal{N}(0,1/n)$, is concentrated.
    \item The sequence $\{ \bm B_{q}^{n} \otimes \bm C_{q}^{n} \}_{n \in {\mathbb N}^+}$, where $\{ \bm B_{q}^{n} \}_{n \in {\mathbb N}^+}$ and $\{ \bm C_{q}^{n} \}_{n \in {\mathbb N}^+}$ are two sequences of $q$-tall matrices whose entries are $i.i.d$ $\mathcal{N}(0,1/n)$, is also concentrated.
\end{enumerate}
\end{example}
\begin{example} \label{eg:non_concentrated}
Deterministic settings:
\begin{enumerate}
    \item The sequence of $q$-tall matrices $\{ \bm D_{q}^{n} \}_{n \in {\mathbb N}^+}$, where the entries of $\bm D_{q}^{n}$ are all $1$, is concentrated. 
    \item The sequence of $1/2$-tall matrices $\{ \bm E_{q}^{n} \}_{n \in {\mathbb N}^+}$ where
    \begin{align*}
        \bm E_q^n = \begin{bmatrix}
            0.6 \sqrt{\frac{2}{n}} \mathtt{\bm H}_{n/2} \\ 0.8 \sqrt{\frac{2}{n}} \mathtt{\bm H}_{n/2}
        \end{bmatrix} ,
    \end{align*}
    for $\mathtt{\bm H}_{n/2}$ being a Hadamard matrix of order $n/2$ \cite{hedayat1978hadamard}, is not concentrated. On the other hand, one can verify that $\bm E_q^n$ is $\mu$-incoherent, for 
    \begin{align*}
        \mu = \norm{0.8 \sqrt{2/n} \mathtt{\bm H}_{n/2}}_F^2 \frac{n}{n/2} = 1.28 .
    \end{align*}
    Thus, the concentration assumption in Definition~\ref{def:concentrated} is stronger than the widely-used incoherence assumption.
\end{enumerate}
\end{example}
With these definitions in place, we now state the result on the limiting ESD of a truncation of orthogonal matrices. To fit our matrix completion setting in this paper, we rephrase the result in \cite{raich2016eigenvalue} to the case of row sub-sampled semi-orthogonal matrices (as opposed to column sub-sampled semi-orthogonal matrices in the aforementioned paper).
\begin{proposition}[Rephrased from \cite{raich2016eigenvalue}]
\label{prop:SSP14}
Let $\{ \bm W_{q}^{n} \}_{n \in {\mathbb N}^+}$ be a sequence of $q$-tall matrices that is concentrated. In addition, assume that $\bm W_{q}^{n}$ is semi-orthogonal for all $n \in N^+$, i.e., $(\bm W_{q}^{n})^{\topnew} \bm W_{q}^{n} = \bm I_{qn}$. Let $\{ \bm W_{pq}^{n} \}_{n \in {\mathbb N}^+}$ be a sequence of row sub-sampled matrices of $\{ \bm W_{q}^{n} \}_{n \in {\mathbb N}^+}$.
Then, as $n \to \infty$, the ESD of $\bm H_n = \bm W_{pq}^{n} (\bm W_{pq}^{n})^{\topnew}$ converges almost surely to the deterministic distribution $\mu_{pq}$ such that
\begin{align*}
d\mu_{pq} &= \Bigl(1-\frac{q}{p}\Bigr)_+ \delta(x) dx + \Bigl(\frac{p+q-1}{p}\Bigr)_+ \delta(x-1) dx \\
&\qquad + \frac{\sqrt{(\lambda^+-x)(x-\lambda^-)}}{2\pi px(1-x)} \mathbb{I}[\lambda^- \leq x \leq \lambda^+] dx, \numberthis \label{equ:ESD}
\end{align*}
where $\delta$ is the Dirac delta function and $$\lambda^{\pm}=\bigl(\sqrt{q(1-p)} \pm \sqrt{p(1-q)}\bigr)^2.$$
\end{proposition}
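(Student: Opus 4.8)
This statement is a row-sampling restatement of the main theorem of \cite{raich2016eigenvalue} (column-sub-sampling the wide semi-orthogonal matrix $(\bm W_q^n)^{\topnew}$ is the same as row-sub-sampling $\bm W_q^n$), so the quickest route is to invoke that result after transposing; for completeness I would reconstruct the argument in three stages. The first stage is a rank count producing the two Dirac terms. Since the nonzero eigenvalues of $\bm H_n=\bm W_{pq}^{n}(\bm W_{pq}^{n})^{\topnew}$ are exactly those of the $qn\times qn$ Gram matrix $\bm G_n=(\bm W_{pq}^{n})^{\topnew}\bm W_{pq}^{n}=\sum_{i\in P_n}\bm w_i^n(\bm w_i^n)^{\topnew}$, the ESD of $\bm H_n$ is determined by that of $\bm G_n$ up to its mass at $0$; because $\rank\bm H_n=\rank\bm G_n\le qn$, the matrix $\bm H_n$ has at least $(p-q)_+ n$ zero eigenvalues, giving the $(1-q/p)_+\,\delta(x)$ term, and because $\bm I_{qn}-\bm G_n=\sum_{i\notin P_n}\bm w_i^n(\bm w_i^n)^{\topnew}$ has rank at most $(1-p)n$ by semi-orthogonality, $\bm G_n$ has at least $(p+q-1)_+ n$ eigenvalues equal to $1$, giving the $((p+q-1)/p)_+\,\delta(x-1)$ term. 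It remains to identify the absolutely continuous part of the limiting ESD of $\bm G_n$.

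The second stage derives a self-consistent equation for the Stieltjes transform $s_n(z)=\frac{1}{qn}\tr(\bm G_n-z\bm I_{qn})^{-1}$ with $z$ in the upper half-plane. Starting from $\tr\!\big(\bm G_n(\bm G_n-z\bm I_{qn})^{-1}\big)=qn+z\,qn\,s_n(z)$ and expanding the left side over the rank-one pieces, the Sherman--Morrison identity gives $(\bm w_j^n)^{\topnew}(\bm G_n-z\bm I_{qn})^{-1}\bm w_j^n=\beta_j/(1+\beta_j)$ with $\beta_j=(\bm w_j^n)^{\topnew}\bm R_j(z)\bm w_j^n$ as in Definition~\ref{def:concentrated}, hence $z\,qn\,s_n(z)=\sum_{j\in P_n}\beta_j/(1+\beta_j)-qn$. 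The concentration hypothesis \eqref{equ:concentrated} replaces each $\beta_j$ by its conditional mean, and semi-orthogonality evaluates it: with $j$ uniform over $[n]\setminus P_n^j$ and $\sum_{i\notin P_n^j}\bm w_i^n(\bm w_i^n)^{\topnew}=\bm I_{qn}-\bm G_n^{(j)}$, one gets ${\mathbb E}_{j\mid P_n^j}[\beta_j]=\frac{1}{(1-p)n}\tr\!\big((\bm I_{qn}-\bm G_n^{(j)})\bm R_j(z)\big)=\frac{1}{(1-p)n}\big((1-z)\tr\bm R_j(z)-qn\big)$, while a rank-one resolvent bound gives $\tr\bm R_j(z)=qn\,s_n(z)+O(1)$. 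In the limit every $\beta_j\to\beta(z)=\frac{q}{1-p}\big((1-z)s(z)-1\big)$ and $z\,s(z)=\frac{p}{q}\cdot\frac{\beta(z)}{1+\beta(z)}-1$, and eliminating $\beta(z)$ gives a quadratic for $s(z)$.

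The third stage solves the quadratic, keeps the root with nonnegative imaginary part on the upper half-plane, and applies Stieltjes inversion $f(x)=\frac{1}{\pi}\lim_{\eta\downarrow 0}\mathrm{Im}\,s(x+i\eta)$: the discriminant vanishes precisely at $x=\lambda^{\pm}=\big(\sqrt{q(1-p)}\pm\sqrt{p(1-q)}\big)^2$, yielding the density $\sqrt{(\lambda^+-x)(x-\lambda^-)}\big/\big(2\pi q\,x(1-x)\big)$ on $[\lambda^-,\lambda^+]$, which becomes the density in \eqref{equ:ESD} once the rescaling $\mu_{\bm G_n}\mapsto\mu_{\bm H_n}$ replaces $q$ by $p$. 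To turn this into a proof I would (i) show $s_n(z)-{\mathbb E}[s_n(z)]\to 0$ almost surely by a Doob martingale-difference estimate along the successive draws forming $P_n$ (sampling without replacement), bounding each increment by $O(1/n)$ via rank-one eigenvalue interlacing, followed by Borel--Cantelli; (ii) show ${\mathbb E}[s_n(z)]\to s(z)$ by feeding these estimates through the fixed-point relation; and (iii) lift almost-sure pointwise convergence of Stieltjes transforms to weak convergence of the ESDs, hence to the stated distribution, via the standard continuity theorem.

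The hard part is the second stage: converting the heuristic leave-one-out substitution into rigorous estimates using only the concentration property of Definition~\ref{def:concentrated} --- weaker than incoherence, as Example~\ref{eg:non_concentrated} illustrates --- together with semi-orthogonality, and not independence or Gaussianity, while $P_n$ is a uniformly chosen subset rather than a Bernoulli sample, so that exchangeability and martingale arguments must replace i.i.d.\ concentration. A subsidiary technical point is keeping the denominators $1+\beta_j$ uniformly away from zero, so that $\beta\mapsto\beta/(1+\beta)$ is stable; this is handled by first restricting to $z$ with $\mathrm{Im}\,z$ bounded below, where $\mathrm{Im}\,\beta_j$ has a definite sign, and then extending to the whole upper half-plane with a normal-families (Vitali) argument.
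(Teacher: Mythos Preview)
The paper does not prove this proposition at all: it is stated as a direct citation of \cite{raich2016eigenvalue}, with only the remark that row sub-sampling $\bm W_q^n$ is equivalent to column sub-sampling $(\bm W_q^n)^{\topnew}$. Your opening sentence captures exactly this, so on the point of comparison you already match the paper.

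Your three-stage reconstruction goes well beyond what the paper offers and is a sound outline of the standard Stieltjes-transform argument: the rank count for the two atoms is correct (using semi-orthogonality for the mass at $1$), the leave-one-out/Sherman--Morrison derivation of the self-consistent equation is the right engine, and the exchangeability/martingale route to almost-sure convergence under sampling without replacement is the appropriate substitute for i.i.d.\ concentration. One slip in your commentary: you describe the concentration property as ``weaker than incoherence'', but Example~\ref{eg:non_concentrated} in the paper exhibits an incoherent sequence that is \emph{not} concentrated, and the paper draws the opposite conclusion --- concentration is the \emph{stronger} hypothesis, which is precisely why Proposition~\ref{prop:SSP14} does not follow from incoherence-based results.
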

\noindent The proposition asserts that the limiting ESD of $\bm H_n$ exists and depends only on the row ratio $p$ and the column ratio $q$, provided that $\{ \bm W_{q}^{n} \}_{n \in {\mathbb N}^+}$ is concentrated. 
We note that the distribution $\mu_{pq}$ is exactly the same as the limiting distribution of the MANOVA ensemble.
Indeed, one can show that the MANOVA ensemble is a concentrated matrix sequence:
\begin{lemma} \label{lem:haar_uni}
Let $\bm W^{n}$ be a Haar-distributed orthogonal matrix in $\mathbb{O}(n)$ and $\bm W_{q}^{n}$ be the semi-orthogonal matrices obtained from any $qn$ (for $q \in (0,1)$) columns of $\bm W^{n}$. Then the sequence $\{\bm W_{q}^{n}\}_{n \in {\mathbb N}^+}$ is concentrated.
\end{lemma}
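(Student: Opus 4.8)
The plan is to exploit the invariance of the Haar measure to condition away all but one of the sampled rows, thereby reducing the statement to a concentration-of-measure bound for a Lipschitz quadratic form on a high-dimensional sphere.

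First, by the invariance of the Haar measure under column permutations, we may assume $\bm W_q^n$ consists of the first $qn$ columns of $\bm W^n$, so that $\bm w_i^n = \bm \Pi \bm r_i^n$, where $\bm r_i^n \in \R^n$ denotes the $i$-th row of $\bm W^n$ and $\bm \Pi \in \R^{qn \times n}$ selects the first $qn$ coordinates. Fix $z$ with $\mathrm{dist}(z,[0,1]) =: c_z > 0$ (it suffices to treat such $z$, as in the proof of Proposition~\ref{prop:SSP14}) and condition on the $\sigma$-algebra $\mathcal{F}_j$ generated by $P_n$ together with the rows $\{\bm r_i^n : i \in P_n^j\}$. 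Then $\sum_{i \in P_n^j} \bm w_i^n (\bm w_i^n)^\topnew$ is $\mathcal{F}_j$-measurable, positive semidefinite, and dominated by $(\bm W_q^n)^\topnew \bm W_q^n = \bm I_{qn}$; hence $\bm R_j(z)$ is $\mathcal{F}_j$-measurable with $\norm{\bm R_j(z)}_2 \leq 1/c_z$, and writing $\bm B = \bm \Pi^\topnew \bm R_j(z) \bm \Pi \in \R^{n \times n}$ we get $\norm{\bm B}_2 \leq 1/c_z$ and $(\bm w_j^n)^\topnew \bm R_j(z) \bm w_j^n = (\bm r_j^n)^\topnew \bm B \bm r_j^n$.

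The crucial step is to identify the conditional law of $\bm r_j^n$. By the sequential disintegration of the Haar measure on $\mathbb{O}(n)$ --- each row, conditioned on the others, is uniform on the unit sphere of their orthogonal complement --- together with row exchangeability, $\bm r_j^n$ is, conditionally on $\mathcal{F}_j$, uniform on the unit sphere of $H^\perp$, where $H = \mathrm{span}\{\bm r_i^n : i \in P_n^j\}$; almost surely $\dim H = pn - 1$, so $d := \dim H^\perp = n - pn + 1$ is of order $n$ since $p \in (0,1)$. Decomposing $\bm B$ into real and imaginary parts, each of the maps $\bm x \mapsto \bm x^\topnew (\mathrm{Re}\,\bm B)\bm x$ and $\bm x \mapsto \bm x^\topnew (\mathrm{Im}\,\bm B)\bm x$ is $2\norm{\bm B}_2$-Lipschitz, hence $(2/c_z)$-Lipschitz uniformly in $n$; L\'evy's concentration inequality on $S^{d-1}$ then yields, conditionally on $\mathcal{F}_j$,
\begin{align*}
    \mathbb{P}\bigl( \bigl| (\bm r_j^n)^\topnew \bm B \bm r_j^n - \mathbb{E}[ (\bm r_j^n)^\topnew \bm B \bm r_j^n \mid \mathcal{F}_j ] \bigr| > t \bigr) \leq C \exp(- c\, c_z^2\, d\, t^2)
\end{align*}
for absolute constants $c, C > 0$. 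Since $d \to \infty$, taking expectations over $\mathcal{F}_j$ gives $(\bm w_j^n)^\topnew \bm R_j(z) \bm w_j^n - \mathbb{E}_{j \mid P_n^j}[ (\bm w_j^n)^\topnew \bm R_j(z) \bm w_j^n ] \overset{\text{p}}{\to} 0$, which is exactly the concentration property of Definition~\ref{def:concentrated}. (Should $\mathbb{E}_{j \mid P_n^j}$ be read as conditioning on the index set alone, one additionally notes $\mathbb{E}[ (\bm r_j^n)^\topnew \bm B \bm r_j^n \mid \mathcal{F}_j ] = \tfrac{1}{d}\tr( \bm R_j(z)\, \bm \Pi \bm P_{H^\perp} \bm \Pi^\topnew )$ and invokes the standard concentration of this resolvent trace.)

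I expect the main obstacle to be making the invariance reduction rigorous --- precisely, the disintegration of the Haar measure along its rows, guaranteeing that conditionally on $\mathcal{F}_j$ the single row $\bm r_j^n$ is \emph{exactly} uniform on the unit sphere of the $(n - pn + 1)$-dimensional orthogonal complement of the other sampled rows. Once that is settled, the remainder is a textbook spherical concentration estimate with an $n$-independent Lipschitz constant. An alternative --- representing $\bm W^n$ by Gram--Schmidt orthonormalization of a Gaussian matrix and transferring the Gaussian case in Example~\ref{eg:concentrated} --- looks messier, since selecting columns does not interact cleanly with row-wise orthonormalization; the direct Haar-invariance route is therefore preferable.
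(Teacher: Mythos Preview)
The paper does not prove this lemma: it simply states that Lemmas~\ref{lem:haar_uni} and~\ref{lem:kron_uni} ``are immediate consequences of Lemma~3.1 in \cite{farrell2013local}'' and omits any argument. Your proposal therefore supplies a self-contained proof where the paper defers to the literature.

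Your route is correct. The key step --- that conditionally on any $pn-1$ rows of a Haar-distributed $\bm W^n$, each remaining row is uniform on the unit sphere of their $(n-pn+1)$-dimensional orthogonal complement --- is the standard sequential disintegration of Haar measure combined with row exchangeability, and that dimension indeed grows linearly in $n$ since $p<1$. The Lipschitz bound $2\norm{\bm B}_2 \leq 2/c_z$ together with L\'evy's spherical concentration then closes the argument. Two minor remarks. First, restricting to $z$ bounded away from $[0,1]$ is harmless: the resolvent is only well-defined off the spectrum of a PSD matrix dominated by $\bm I_{qn}$, and this is precisely the regime used in the Stieltjes-transform machinery behind Proposition~\ref{prop:SSP14}. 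Second, your hedge about the meaning of $\mathbb{E}_{j\mid P_n^j}$ is unnecessary: the supplementary examples (e.g., Lemma~\ref{lem:haar}, where the centering term is $\tfrac{1}{n}\tr(\bm M_{qn})$) make clear that the intended conditioning is on the values $\{\bm w_i^n : i \in P_n^j\}$, which is exactly your $\mathcal{F}_j$.
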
 
\noindent Furthermore, the Kronecker product of two Haar-distributed orthogonal matrices also possesses the concentration property:
\begin{lemma} \label{lem:kron_uni}
Let $\bm U^{n_1}$ and $\bm V^{n_2}$ be Haar-distributed orthogonal matrices in $\mathbb{O}(n_1)$ and $\mathbb{O}(n_2)$, respectively. Define $\bm U_{q_1}^{n_1}$ and $\bm V_{q_2}^{n_2}$ as the semi-orthogonal matrices obtained from any $q_1$ and $q_2$ (for $q_1,q_2 \in (0,1)$) columns of $\bm U^{n_1}$ and $\bm V^{n_2}$, respectively. Then the sequence $\{\bm W_q^n = \bm U_{q_1}^{n_1} \otimes \bm V_{q_2}^{n_2}\}_{n \in {\mathbb N}^+}$ (with $q=q_1q_2$) is concentrated.
\end{lemma}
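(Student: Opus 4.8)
\emph{Proof plan.} The goal is to verify that the sequence $\{\bm W_q^n=\bm U_{q_1}^{n_1}\otimes\bm V_{q_2}^{n_2}\}_{n\in\mathbb N^+}$, with $q=q_1q_2$, satisfies the concentration condition \eqref{equ:concentrated}. The plan is to index the $n=n_1n_2$ rows of $\bm W_q^n$ by pairs $(a,b)\in[n_1]\times[n_2]$, so that the $(a,b)$-th row is $\bm w_{(a,b)}^n=\bm u_a\otimes\bm v_b$, where $\bm u_a^\top,\bm v_b^\top$ are the $a$-th and $b$-th rows of $\bm U_{q_1}^{n_1}$ and $\bm V_{q_2}^{n_2}$; since each of these is a sub-vector of a row of a full orthogonal matrix, $\|\bm u_a\|_2,\|\bm v_b\|_2\le1$ and hence $\|\bm w_{(a,b)}^n\|_2\le1$. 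Fix $j=(a,b)$, condition on $P_n^j$, and set $X_n:=(\bm w_j^n)^\top\bm R_j(z)\bm w_j^n$. For $z$ with $\Im z\neq0$ we have $\|\bm R_j(z)\|_2\le1/|\Im z|$, so $|X_n|\le1/|\Im z|$ deterministically; moreover, $\mathbb E[(X_n-\mathbb E[X_n\mid\mathcal F])^2]=\Var(X_n)-\Var(\mathbb E[X_n\mid\mathcal F])\le\Var(X_n)$ for any $\sigma$-algebra $\mathcal F$. It therefore suffices to show that $\Var(X_n)\to0$, uniformly for $z$ in compact subsets of $\mathbb C\setminus\mathbb R$. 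This does not reduce to Lemma~\ref{lem:haar_uni}: the Kronecker product $\bm U_{q_1}^{n_1}\otimes\bm V_{q_2}^{n_2}$ is \emph{not} a column-truncation of a Haar matrix in $\mathbb O(n)$, and the rows $\bm w_{(a,b)}^n$ are dependent.

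\emph{Gaussianisation.} Next I would expose the independence hidden inside the two Haar factors. Since the matrix of any $q_\ell n_\ell$ columns of a Haar matrix in $\mathbb O(n_\ell)$ is uniform on the corresponding Stiefel manifold, we have $\bm U_{q_1}^{n_1}\overset{\mathrm{d}}{=}\bm G_1(\bm G_1^\top\bm G_1)^{-1/2}$ and $\bm V_{q_2}^{n_2}\overset{\mathrm{d}}{=}\bm G_2(\bm G_2^\top\bm G_2)^{-1/2}$ for independent $\bm G_1\in\mathbb R^{n_1\times q_1n_1}$, $\bm G_2\in\mathbb R^{n_2\times q_2n_2}$ with i.i.d.\ $\mathcal N(0,1)$ entries. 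By the mixed-product rule, $\bm W_q^n\overset{\mathrm{d}}{=}(\bm G_1\otimes\bm G_2)\bigl((\bm G_1^\top\bm G_1)^{-1/2}\otimes(\bm G_2^\top\bm G_2)^{-1/2}\bigr)$, so $\bm w_{(a,b)}^n=(\bm T_1\bm g_{1,a})\otimes(\bm T_2\bm g_{2,b})$ with $\bm T_\ell=(\bm G_\ell^\top\bm G_\ell)^{-1/2}$ and $\bm g_{\ell,\cdot}$ the mutually independent rows of $\bm G_\ell$. In these coordinates $X_n$ is a rational function of the independent Gaussian rows $\{\bm g_{1,a'}\}_{a'},\{\bm g_{2,b'}\}_{b'}$.

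\emph{Decoupling and the second-moment bound.} To estimate $\Var(X_n)$ I would decouple the two special rows $\bm g_{1,a},\bm g_{2,b}$ from the rest in two stages. First, with $\bm A_1^{(-a)}=\bm G_1^\top\bm G_1-\bm g_{1,a}\bm g_{1,a}^\top$ and $\bm A_2^{(-b)}=\bm G_2^\top\bm G_2-\bm g_{2,b}\bm g_{2,b}^\top$ --- each independent of $\bm g_{1,a}$, resp.\ $\bm g_{2,b}$ --- rank-one update identities (Sherman--Morrison for inverses, and its analogue for the inverse square root via the representation of $\bm T_\ell$ as an average of resolvents) let me rewrite $\bm T_1,\bm T_2$ and every row of $\bm R_j(z)$ through these leave-one-out Gram matrices, so that $\bm g_{1,a}$ and $\bm g_{2,b}$ enter $X_n$ only through quadratic forms $\bm g^\top\bm B\bm g$ in which $\bm B$ is deterministic given everything else and $\|\bm B\|_2=O(1/|\Im z|)$. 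The Gaussian trace lemma then replaces each such form by its scaled trace up to an $O_p(n^{-1/2})$ fluctuation, collapsing $X_n$, to leading order, onto a continuous functional $\Phi_z$ of quantities that no longer involve $\bm g_{1,a},\bm g_{2,b}$. Second, the residual resolvent inside $\Phi_z$ still carries the at most $n_1+n_2=o(n)$ sibling rows $(a,b')$ and $(a',b)$ that share a coordinate with $j$; but since $\Phi_z$ depends on that resolvent only through trace-type functionals, deleting these $o(n)$ rank-one terms perturbs $\Phi_z$ by $O\bigl((n_1+n_2)/(n|\Im z|)\bigr)=o(1)$ by resolvent interlacing. The surviving $\Phi_z$ is a functional of mutually independent blocks, whose fluctuation one controls by an analogous, iterated leave-one-out/trace-lemma estimate. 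Assembling the three error contributions gives $\Var(X_n)=o(1)$, which in turn yields \eqref{equ:concentrated}. The single-factor version of this chain is precisely the proof of Lemma~\ref{lem:haar_uni}.

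\emph{The main obstacle.} The crux is the whitening factor $\bm T_\ell=(\bm G_\ell^\top\bm G_\ell)^{-1/2}$, which couples $\bm w_j^n$ to $\bm R_j(z)$ on a global scale and resists any bounded-differences treatment: resampling a single Gaussian row changes $\bm G_\ell^\top\bm G_\ell$ by a rank-one term whose nonzero eigenvalue, of order $q_\ell n_\ell$, is comparable to $\lambda_{\min}(\bm G_\ell^\top\bm G_\ell)$ (of order $n_\ell(1-\sqrt{q_\ell})^2$) as soon as $q_\ell$ is not small, so $\bm T_\ell$ itself moves by a constant relative amount under one resampling. The whole difficulty is therefore to never differentiate in a single Gaussian row, and instead to route all fluctuations into the stable scalar forms $\bm g^\top\bm B\bm g$, whose $O(n^{-1/2})$ concentration the trace lemma supplies once $\bm B$ has been rendered independent of $\bm g$; making every such bound uniform in $n$ and locally uniform in $z\in\mathbb C\setminus\mathbb R$ is the real content, the remainder being bookkeeping of the Kronecker structure through the mixed-product identity.
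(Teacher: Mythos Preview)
The paper does not actually prove Lemma~\ref{lem:kron_uni}: it states that both Lemmas~\ref{lem:haar_uni} and~\ref{lem:kron_uni} are ``immediate consequences of Lemma~3.1 in \cite{farrell2013local}'' and omits the argument entirely. Your proposal is therefore a genuinely different route---a self-contained reconstruction rather than a citation. What you outline (Gaussianise the two Haar factors via $\bm G_\ell(\bm G_\ell^\top\bm G_\ell)^{-1/2}$, decouple the distinguished row $\bm g_{1,a}\otimes\bm g_{2,b}$ from the leave-one-out Gram matrices by rank-one resolvent identities, reduce everything to quadratic forms controlled by the Gaussian trace lemma, then peel off the $O(n_1+n_2)=o(n)$ sibling rows by interlacing) is precisely the style of argument underlying the Farrell--Nadakuditi lemma the paper invokes, and is also the natural extension of the elementary moment computation the paper gives in its Supplementary Material for the \emph{unnormalised} Gaussian Kronecker case (Example~\ref{eg:concentrated}, item~2). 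The trade-off is clear: the paper's route is one sentence but externalises all content; yours is self-contained but must confront the whitening factor $\bm T_\ell$, which you correctly flag as the crux. One presentational point worth tightening: your reduction ``it suffices that $\Var(X_n)\to 0$'' only delivers \eqref{equ:concentrated} if the variance is taken over \emph{all} randomness (Haar and $P_n$) and $\mathcal F=\sigma(P_n^j,\bm U^{n_1},\bm V^{n_2})$, so that $\mathbb E[X_n\mid\mathcal F]$ coincides with ${\mathbb E}_{j\mid P_n^j}[\cdot]$; the phrase ``fix $j=(a,b)$, condition on $P_n^j$'' just before that claim reads as if you are computing a conditional variance over Haar alone, which would not by itself close the loop.
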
 
\noindent Lemmas~\ref{lem:haar_uni} and \ref{lem:kron_uni} are immediate consequences of  Lemma~3.1 in \cite{farrell2013local}, so we omit the proof of these lemmas here.

\subsection{Proposed Estimation of the Linear Rate $\rho$}

In order to apply Proposition~\ref{prop:SSP14} to our matrix completion setting, we recall that $\bm W_{pq}^{n}$ can be viewed as the $n$-th element of a sequence of row sub-sampled matrices of $\{\bm W_{q}^{n}\}_{n \in {\mathbb N}^+}$, where $\bm W_{q}^{n} = \bm V_\perp^{n_2} \otimes \bm U_\perp^{n_1}$.
If the sequence $\{\bm W_{q}^{n}\}_{n \in {\mathbb N}^+}$ is concentrated, then (\ref{equ:ESD}) holds for $p=1-\rho_s$ and $q=(1-\rho_r)^2$. Therefore, one might expect that the smallest eigenvalue of $\bm H_n = \bm W_{pq}^{n} (\bm W_{pq}^{n})^{\topnew}$ converges to 
$$\lambda^- = \bigl(\sqrt{q(1-p)} - \sqrt{p(1-q)}\bigr)^2 .$$
Thus, by Theorem~\ref{algo:IHTSVD}, the convergence rate $\rho$ converges to $1-\lambda^-$.
The following theorem is an immediate application of Proposition~\ref{prop:SSP14} to our large-scale matrix completion setting:
\begin{theorem} \label{theo:rho_asymp}
As $m \to \infty$, assume that $\bm M$ is generated in a way that the Kronecker product $\bm W_q^n = \bm V_{\perp}^{n_2} \otimes \bm U_{\perp}^{n_1}$ forms a sequence of semi-orthogonal matrices that is concentrated. Then, provided $\rho_s \geq 1 - (1-\rho_r)^2$, the ESD $\mu_{\bm H_n}$ converges almost surely to the deterministic distribution $\mu_{\rho_r \rho_s}$ such that
\begin{align*}
d\mu_{\rho_r \rho_s} &= \Bigl(\frac{(1-\rho_r)^2-\rho_s}{1-\rho_s}\Bigr)_+ \delta(x-1) dx \\
&\quad + \frac{\sqrt{(\lambda^+-x)(x-\lambda^-)}}{2\pi (1-\rho_s)x(1-x)} \mathbb{I}[\lambda^- \leq x \leq \lambda^+] dx, \numberthis \label{equ:ESD_H}
\end{align*}
where $\lambda^{\pm}=\Bigl( \sqrt{(1-\rho_r)^2 \rho_s} \pm \sqrt{\rho_r(2-\rho_r)(1-\rho_s)} \Bigr)^2$.
\end{theorem}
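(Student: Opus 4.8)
The plan is to derive Theorem~\ref{theo:rho_asymp} as a direct specialization of Proposition~\ref{prop:SSP14}. Essentially all of the analytic content is already contained in the weak‑convergence statement imported from \cite{raich2016eigenvalue}; what remains is to (i) cast $\bm H_n$ in exactly the ``Gram matrix of a row sub‑sampled semi‑orthogonal matrix'' form required there, (ii) verify its hypotheses and read off the correct constants $p$ and $q$, and (iii) substitute those constants into the limiting law (\ref{equ:ESD}) and simplify to (\ref{equ:ESD_H}).

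First I would rewrite $\bm H_n$ in the required form. Set $n = n_1 n_2$ and $\bm W_q^n := \bm V_\perp^{n_2} \otimes \bm U_\perp^{n_1} \in \R^{n \times (n_1-r)(n_2-r)}$. Using $\bm P_{\bm U_\perp} = \bm U_\perp \bm U_\perp^\topnew$, $\bm P_{\bm V_\perp} = \bm V_\perp \bm V_\perp^\topnew$ together with the mixed‑product property of the Kronecker product, one has $\bm P_{\bm V_\perp} \otimes \bm P_{\bm U_\perp} = (\bm V_\perp \otimes \bm U_\perp)(\bm V_\perp \otimes \bm U_\perp)^\topnew = \bm W_q^n (\bm W_q^n)^\topnew$, so that
\begin{align*}
\bm H_n = {\bm S}_{\bar{\Omega}}^\topnew (\bm P_{\bm V_\perp} \otimes \bm P_{\bm U_\perp}) {\bm S}_{\bar{\Omega}} = \bigl({\bm S}_{\bar{\Omega}}^\topnew \bm W_q^n \bigr) \bigl({\bm S}_{\bar{\Omega}}^\topnew \bm W_q^n \bigr)^\topnew = \bm W_{pq}^n (\bm W_{pq}^n)^\topnew ,
\end{align*}
where $\bm W_{pq}^n := {\bm S}_{\bar{\Omega}}^\topnew \bm W_q^n$ is the submatrix of $\bm W_q^n$ formed by the $n_1 n_2 - s$ rows indexed by $\bar{\Omega}$.

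Next I would check the hypotheses of Proposition~\ref{prop:SSP14}. The matrix $\bm W_q^n$ is semi‑orthogonal because $(\bm V_\perp \otimes \bm U_\perp)^\topnew (\bm V_\perp \otimes \bm U_\perp) = (\bm V_\perp^\topnew \bm V_\perp) \otimes (\bm U_\perp^\topnew \bm U_\perp) = \bm I$; its column‑to‑row ratio is $\frac{(n_1-r)(n_2-r)}{n_1 n_2} = \bigl(1 - \tfrac{r}{n_1}\bigr)\bigl(1 - \tfrac{r}{n_2}\bigr) = (1-\rho_r)^2 =: q$ by Definition~\ref{assp:rho_r}, so $\{\bm W_q^n\}$ is a sequence of $q$‑tall matrices; and since $\bar{\Omega}$ is a uniformly random subset of $[n_1]\times[n_2]$ (identified with $[n]$ by vectorization) of cardinality $n_1 n_2 - s$, the matrix $\bm W_{pq}^n$ is a row sub‑sampled matrix of $\bm W_q^n$ in the sense of Definition~\ref{def:row}, with row ratio $p := \frac{n_1 n_2 - s}{n_1 n_2} = 1 - \rho_s$ by Definition~\ref{assp:rho_s}. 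Concentration of $\{\bm W_q^n\}$ is assumed in the statement. Hence Proposition~\ref{prop:SSP14} gives $\mu_{\bm H_n} \to \mu_{pq}$ almost surely, and it only remains to substitute $p = 1-\rho_s$, $q = (1-\rho_r)^2$ into (\ref{equ:ESD}). Using $1 - q = \rho_r(2-\rho_r)$ and $1 - p = \rho_s$ yields $\lambda^\pm = \bigl(\sqrt{(1-\rho_r)^2\rho_s} \pm \sqrt{\rho_r(2-\rho_r)(1-\rho_s)}\bigr)^2$; the atom at $x=1$ acquires weight $\bigl(\tfrac{p+q-1}{p}\bigr)_+ = \bigl(\tfrac{(1-\rho_r)^2-\rho_s}{1-\rho_s}\bigr)_+$; the continuous part contributes $2\pi(1-\rho_s)x(1-x)$ in the denominator; and the atom at $x=0$ has weight $\bigl(1-\tfrac{q}{p}\bigr)_+$, which vanishes precisely because the hypothesis $\rho_s \geq 1-(1-\rho_r)^2$ is equivalent to $p = 1-\rho_s \leq (1-\rho_r)^2 = q$. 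Collecting these pieces reproduces (\ref{equ:ESD_H}).

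I expect the algebra above to be the easy part; the points that genuinely need care are the modelling identifications. The main one is confirming that selecting the rows of $\bm W_q^n$ indexed by the complement of a uniformly sampled $\Omega$ is the same random object as the uniform $pn$‑subset sub‑sampling of Definition~\ref{def:row}, once $[n_1]\times[n_2]$ is identified with $[n]$ via vectorization, and that the row‑sub‑sampling version of Proposition~\ref{prop:SSP14} used here is the correct transpose of the column‑sub‑sampling statement actually proved in \cite{raich2016eigenvalue}. A secondary subtlety is that concentration of $\bm W_q^n = \bm V_\perp^{n_2} \otimes \bm U_\perp^{n_1}$ is a genuine hypothesis rather than a consequence: for concrete generative models of $\bm M$ (such as the random orthogonal model) it would be discharged via Lemma~\ref{lem:kron_uni}, but for the theorem as stated we simply carry it forward.
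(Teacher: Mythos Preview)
Your proposal is correct and follows essentially the same route as the paper: the paper also treats Theorem~\ref{theo:rho_asymp} as an immediate application of Proposition~\ref{prop:SSP14}, after making the identifications $\bm H_n = \bm W_{pq}^n (\bm W_{pq}^n)^\topnew$ with $p = 1-\rho_s$ and $q = (1-\rho_r)^2$ that you spell out. Your write‑up is in fact more detailed than the paper's, explicitly verifying semi‑orthogonality via the mixed‑product property and explaining why the atom at $0$ in (\ref{equ:ESD}) disappears under the hypothesis $\rho_s \geq 1-(1-\rho_r)^2$.
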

\noindent Theorem~\ref{theo:rho_asymp} states the convergence of the spectral distribution of $\bm H$ as the dimensions grow to infinity. It is notable that the support of the distribution consists of the interval $[\lambda^-,\lambda^+]$ and a mass at $1$. Based on this result, we conjecture that the the smallest eigenvalue of $\bm H$ converge to $\lambda^-$ and hence, the convergence rate $\rho$ converges to $\rho_\infty$:
\begin{conjecture} \label{conj:rho_asymp}
Assume the same setting as in Theorem~\ref{theo:rho_asymp}. As $m \to \infty$, the linear rate $\rho$ defined in (\ref{equ:rho}) converges almost surely to $p_\infty = 1 - \lambda^-$, given in (\ref{equ:p_infty}).
\end{conjecture}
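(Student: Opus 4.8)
The plan is to strengthen the convergence of the empirical spectral distribution established in Theorem~\ref{theo:rho_asymp} into a statement about the extreme eigenvalue $\lambda_{\min}(\bm H_n)$, which I would split into a routine upper bound and a hard lower bound. For the upper bound, observe that on the non-trivial regime $\rho_s>1-(1-\rho_r)^2$ one has $\lambda^->0$, and the continuous part of the limiting law $\mu_{\rho_r\rho_s}$ in (\ref{equ:ESD_H}) has a strictly positive density on the open interval $(\lambda^-,\lambda^+)$; hence every open neighbourhood of $\lambda^-$ carries positive $\mu_{\rho_r\rho_s}$-mass. Since $\mu_{\bm H_n}\to\mu_{\rho_r\rho_s}$ weakly almost surely, for each fixed $\epsilon>0$ the matrix $\bm H_n$ has, almost surely and for all large $n$, at least one eigenvalue in $(\lambda^-,\lambda^-+\epsilon)$, so $\limsup_n\lambda_{\min}(\bm H_n)\le\lambda^-$ almost surely, i.e. $\liminf_n\rho\ge\rho_\infty$.

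The real work is the matching lower bound $\liminf_n\lambda_{\min}(\bm H_n)\ge\lambda^-$ a.s., which says that no eigenvalue of $\bm H_n$ escapes below the left edge of the bulk. A useful reformulation exploits the semi-orthogonality of $\bm W_q^n=\bm V_\perp^{n_2}\otimes\bm U_\perp^{n_1}$: writing $\bm W_{\bar p q}^n\in\R^{s\times qn}$ for the complementary row-truncation (the $s$ rows indexed by $\Omega$), we have $(\bm W_{pq}^n)^\topnew\bm W_{pq}^n+(\bm W_{\bar p q}^n)^\topnew\bm W_{\bar p q}^n=\bm I_{qn}$, and in this regime $p<q$ one checks (almost surely) that $\lambda_{\min}(\bm H_n)=1-\lambda_{\max}(\bm\Pi_n(\bm W_{\bar p q}^n)^\topnew\bm W_{\bar p q}^n\bm\Pi_n)$, where $\bm\Pi_n$ is the orthogonal projection onto $\operatorname{range}((\bm W_{pq}^n)^\topnew)$. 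Since $\lambda^-=1-\lambda^+_{\bar p,q}$ with $\lambda^+_{\bar p,q}$ the right edge of the bulk of the complementary truncation, the lower bound becomes an equivalent no-top-outlier statement for that truncation. I would approach this via the resolvent: extending the Stieltjes-transform analysis behind Proposition~\ref{prop:SSP14} from $z\in{\mathbb C}^+$ to a fixed small complex contour $\Gamma$ encircling an interval $[a,b]\subset(0,\lambda^-)$, I would show that $\frac1{pn}\tr(\bm H_n-z\bm I)^{-1}$ converges to the analytic, pole-free deterministic equivalent $s_{\rho_r\rho_s}(z)$ uniformly on $\Gamma$, so that the eigenvalue-counting integral $\frac1{2\pi i}\oint_\Gamma\tr(\bm H_n-z\bm I)^{-1}\,dz$, which is an integer, tends to $0$ and hence vanishes for all large $n$; this gives $\lambda_{\min}(\bm H_n)\ge\lambda^--\epsilon$ eventually. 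Intersecting over a sequence $\epsilon\downarrow 0$ then yields $\lambda_{\min}(\bm H_n)\to\lambda^-$, hence $\rho\to1-\lambda^-=\rho_\infty$, almost surely.

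The main obstacle is precisely the uniform control of the resolvent near the spectral edge. Theorem~\ref{theo:rho_asymp}, and the result of \cite{raich2016eigenvalue} it rests on, is only a law-of-large-numbers statement: it pins down $\frac1{pn}\tr(\bm H_n-z\bm I)^{-1}$ to within an $o(1)$ error, i.e. an $o(n)$ error in the un-normalized trace, which is far too coarse to detect a single stray eigenvalue through the contour integral above. Upgrading this to the $O(1)$-in-the-trace, $o(1)$-uniformly-on-$\Gamma$ precision that the argument needs — essentially an anisotropic or local law for $\bm H_n$, or at least a central-limit-type bound for the linear eigenvalue statistic together with a rigidity estimate — is not available for this randomly-truncated Kronecker-product ensemble; even in the classical Haar-MANOVA case the relevant edge behaviour comes from Tracy--Widom analysis \cite{johnstone2008multivariate}, which uses exact Haar invariance rather than the weaker concentration hypothesis of Definition~\ref{def:concentrated}. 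In particular the generic concentration property, which is enough for the global ESD, is almost certainly insufficient at the edge, and one would likely need genuine Haar randomness of $\bm U_\perp$ and $\bm V_\perp$ (cf. Lemma~\ref{lem:kron_uni}) together with new stability estimates for the self-consistent equation near $z=\lambda^-$. The Kronecker structure may itself perturb the extreme eigenvalues — consistent with the visible mismatch between the Kronecker and fully-random histograms in Figure~\ref{fig:ESD} — which is why we record this as Conjecture~\ref{conj:rho_asymp}, supported numerically, rather than as a theorem.
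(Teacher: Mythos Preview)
Your proposal is not really a proof attempt but rather a careful analysis of what a proof would require and where it gets stuck, and in that respect it is entirely consistent with the paper's own treatment: the paper gives \emph{no} proof of Conjecture~\ref{conj:rho_asymp}. The authors simply state it as a conjecture immediately after Theorem~\ref{theo:rho_asymp}, motivate it by the bounded support $[\lambda^-,\lambda^+]\cup\{1\}$ of the limiting ESD, and support it numerically in Section~\ref{sec:exp}. So there is nothing to compare your argument against.

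That said, your write-up goes well beyond what the paper offers. The paper's implicit reasoning is just ``the ESD has left edge $\lambda^-$, hence $\lambda_{\min}(\bm H_n)$ should tend to $\lambda^-$''; you correctly separate this into the easy direction (weak convergence forces eigenvalues arbitrarily close to $\lambda^-$, giving $\limsup\lambda_{\min}\le\lambda^-$) and the hard no-outlier direction, and you identify precisely why Proposition~\ref{prop:SSP14} is insufficient for the latter: a law-of-large-numbers for the Stieltjes transform controls the trace only to $o(n)$, which cannot rule out $O(1)$ stray eigenvalues below the bulk. Your observation that the concentration property of Definition~\ref{def:concentrated} is likely too weak for edge rigidity, and that one would presumably need genuine Haar randomness plus a local/anisotropic law or Tracy--Widom-type analysis, is exactly the right diagnosis and is more explicit than anything the paper says. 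Your final sentence lands in the same place the paper does: this is a conjecture, not a theorem.
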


\section{Numerical Results}
\label{sec:exp}

In this section, we provide numerical results to verify the exact linear convergence rate of IHTSVD in (\ref{equ:rho}) with the empirical rate observed in monitoring the error through iterations. Additionally, as a supporting evidence for Theorem~\ref{theo:rho_asymp} and Conjecture~\ref{conj:rho_asymp}, we demonstrate the increasing similarity between the empirical rate and the asymptotic rate in (\ref{equ:p_infty}) as the dimensions of the matrix grow.

\subsection{Analytical Rate versus Empirical Rate}

\begin{figure}
    \centering
    \includegraphics[scale=.6]{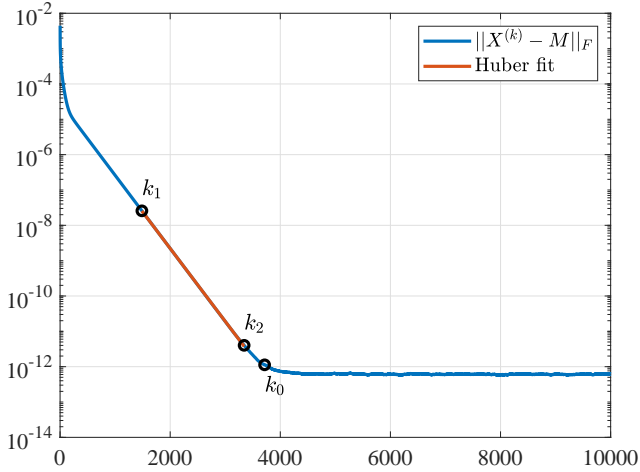}
    \caption{Estimation of the empirical rate using the error sequence $\{\norm{\bm X^{(k)}-\bm M}_F\}_{k=k_1}^{k_2}$. Due to the numerical error below $10^{-12}$, we need to identify the `turning point' at $k_0$ and then set $k_1=\lfloor 0.4 k_0 \rfloor$ and $k_2=\lfloor 0.9 k_0 \rfloor$.}
    \label{fig:estimate}
\end{figure}

\begin{figure*}
    \centering
    \begin{subfigure}[b]{0.45\textwidth}
        \centering
        \includegraphics[width=\textwidth]{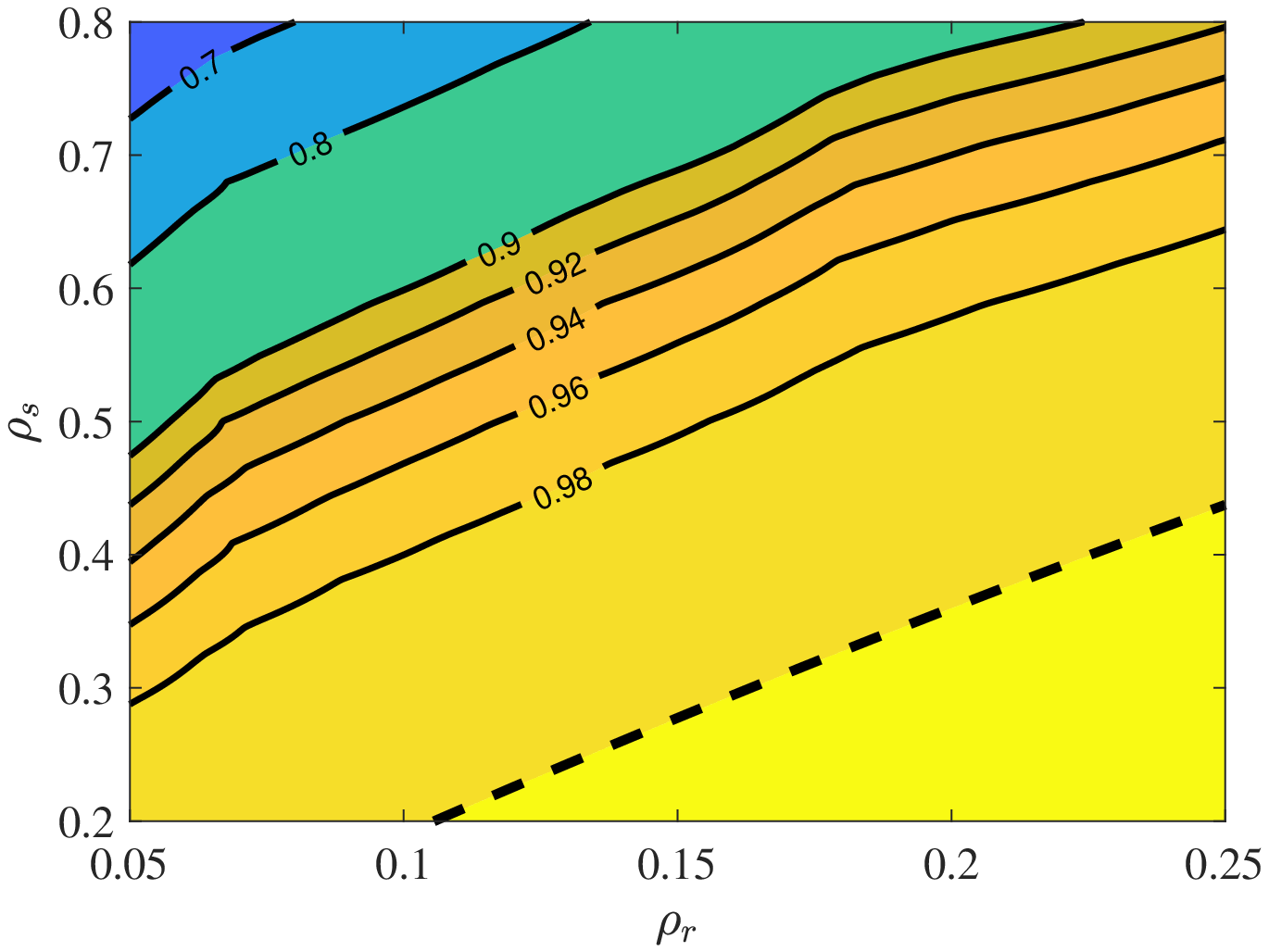}
        \caption{Analytical rate}
    \end{subfigure}
    \hfill
    \begin{subfigure}[b]{0.45\textwidth}
        \centering
        \includegraphics[width=\textwidth]{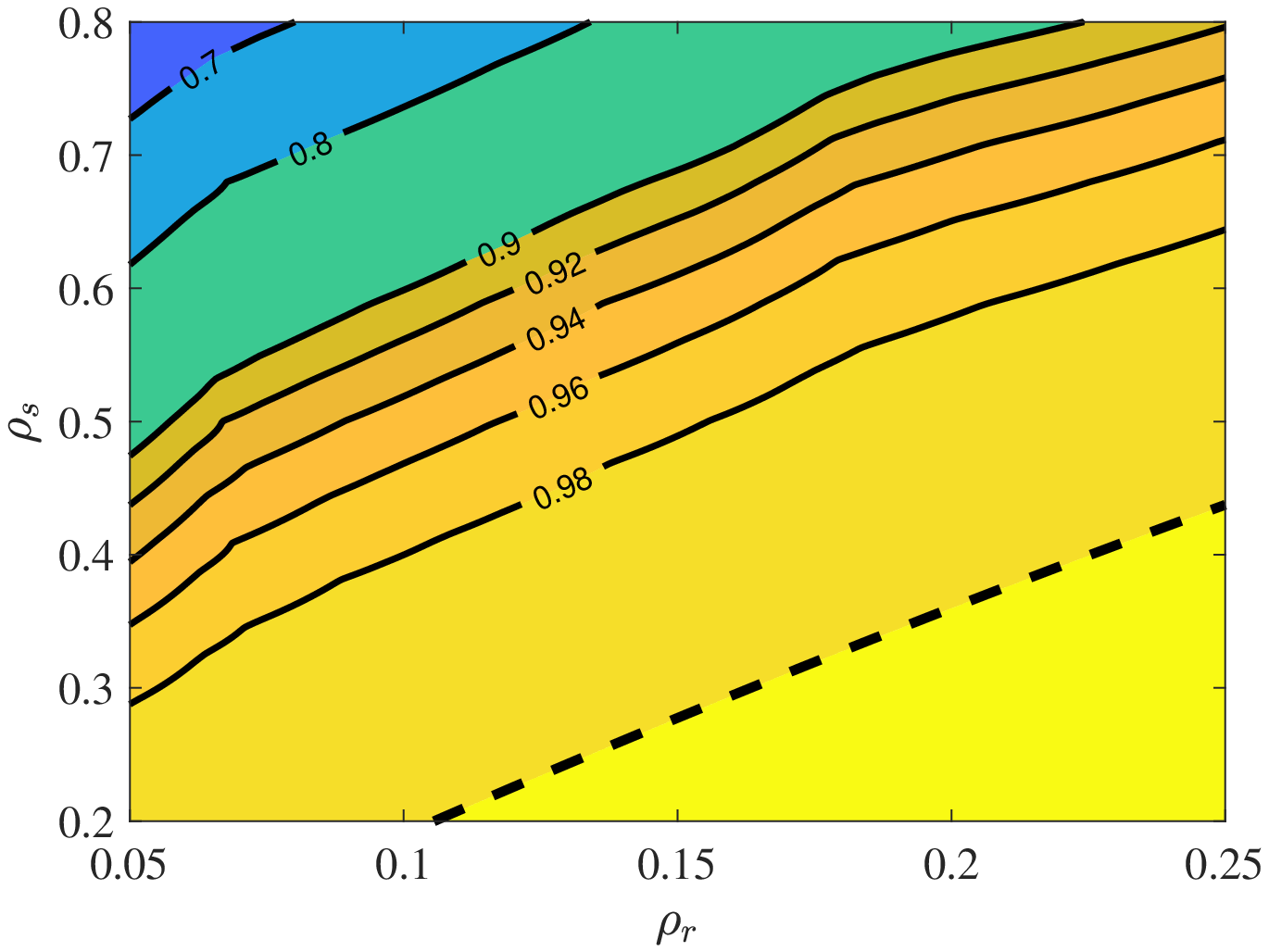}
        \caption{Empirical rate}
    \end{subfigure}
    \hfill
    \begin{subfigure}[b]{0.45\textwidth}
        \centering
        \includegraphics[width=\textwidth]{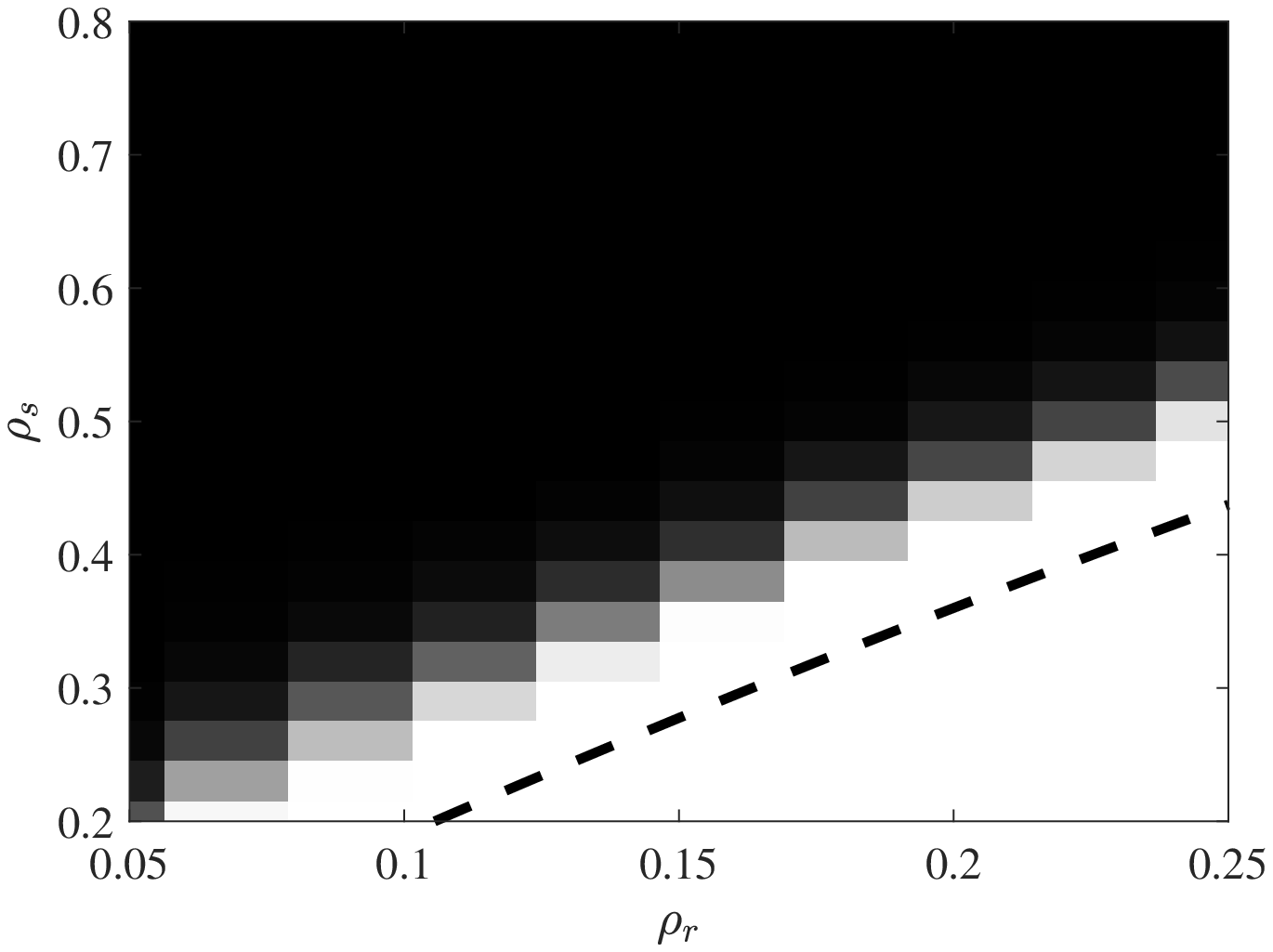}
        \caption{Probability of linear convergence (analytical)}
    \end{subfigure}
    \hfill
    \begin{subfigure}[b]{0.45\textwidth}
        \centering
        \includegraphics[width=\textwidth]{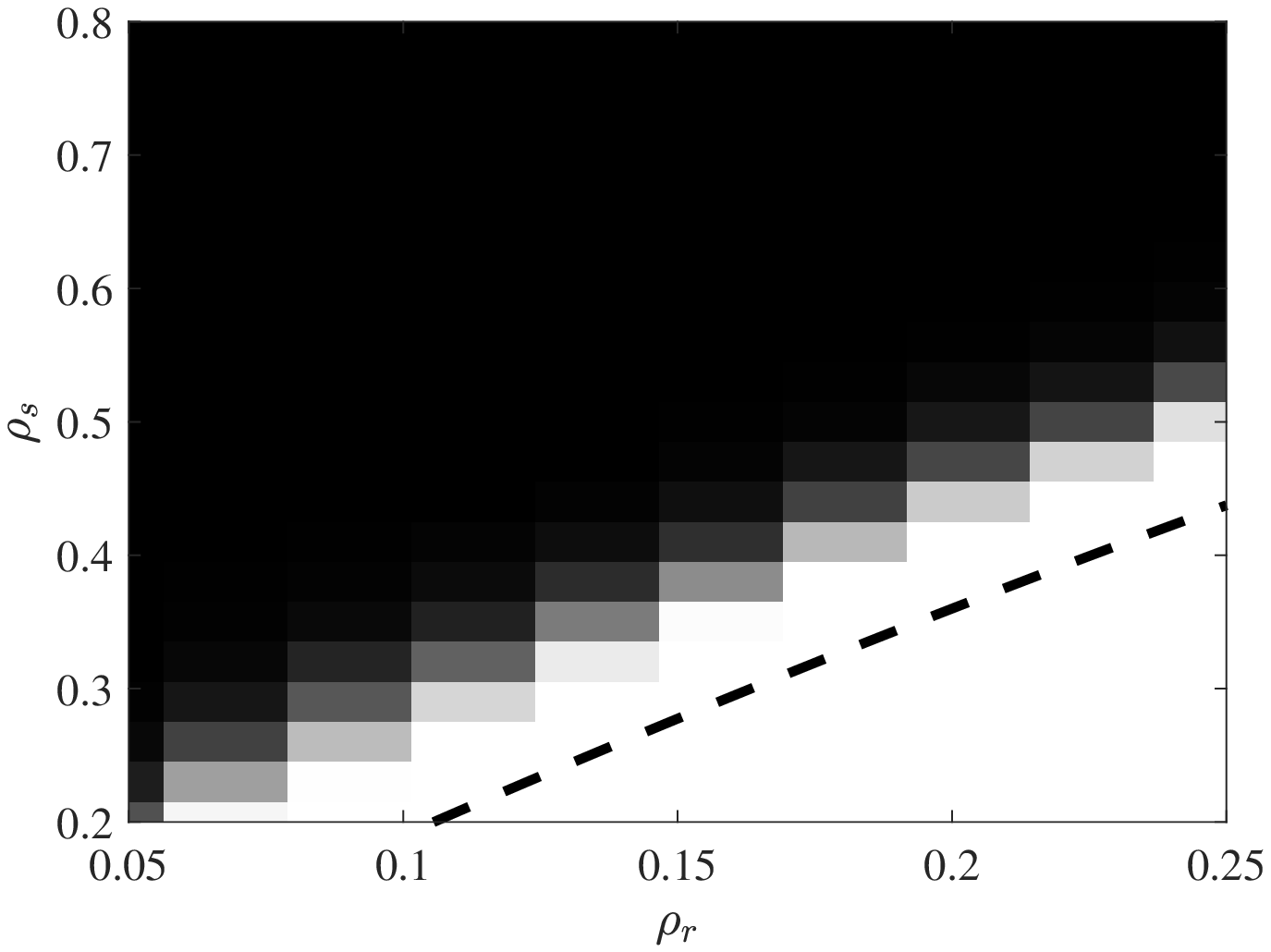}
        \caption{Probability of linear convergence (empirical)}
    \end{subfigure}
    \hfill
    \caption{The analytical rate and the empirical rate of convergence of IHTSVD as a function of the relative rank $\rho_r$ and the sampling ratio $\rho_s$, with $n_1=50$ and $n_2=40$. (a) Contour plot of the analytical rate as a function of $\rho_r$ and $\rho_s$. (b) Contour plot of the empirical rate as a function of $\rho_r$ and $\rho_s$. (c) Empirical probability of linear convergence based on the analytical rate. (d) Empirical probability of linear convergence based on the empirical rate. In (c) and (d), the black color corresponds to linear convergence, whereas the white color corresponds to no linear convergence. The data is evaluated based on a $12 \times 21$ grid over $\rho_r$ and $\rho_s$ and the value of each point in the grid is averaged over $1000$ runs. Additionally, a dashed line is included in each plot to indicate the line $1-\rho_s = (1-\rho_r)^2$. The similarity between the left column and the right column demonstrates the utility of the empirical rate in estimating/approximating the analytical rate.}
    \label{fig:analytic}
\end{figure*}

In this experiment, we verify the analytical expression of the linear convergence rate of IHTSVD by comparing it with the empirical rate obtained by measuring the decrease in the norm of the error matrix. Our goal is to demonstrate that they agree in various settings of $\rho_r$ and $\rho_s$.

\vspace{5pt}
\noindent \textbf{Data generation.} 
We first set the dimensions $n_1=50$ and $n_2=40$. Next, for each $r$ in $\{1,2,\ldots,12\}$, we generate the rank-$r$ matrix $\bm M$ as follows. We construct the random orthogonal matrices $\bm U$ and $\bm V$ by (i) generating a $n_1 \times n_2$ random matrix whose entries are $i.i.d$ normally distributed ${\cal N}(0,1)$ and (ii) performing the singular value decomposition of the resulting matrix. The matrices $\bm U$ and $\bm V$ are comprised of the corresponding left and right singular vectors.
Then, the rank-$r$ matrix $\bm M$ is generated by taking the product $\bm U_1 \bm \Sigma_1 \bm V_1^{\topnew}$, where $\bm \Sigma_1 = \diag(r,r-1,\ldots,1)$ and $\bm U_1, \bm V_1$ are the first $r$ columns of $\bm U$ and $\bm V$, respectively. 
Finally, for each $s$ in the linearly spaced set $\{ 0.2n, 0.23n,0.26n,\ldots, 0.8n \}$, we create the $1000$ different sampling sets, each of them is obtained by generating a random permutation of the set $[n]$ and then selecting the first $s$ elements of the permutation. Thus, we obtain a $12 \times 21$ grid based on the values of $r$ and $s$ such that (i) grid points corresponding to the same rank $r$ share the same underlying matrix $\bm M$; (ii) each point on the grid corresponds to $1000$ different sampling sets.

\vspace{5pt}
\noindent \textbf{Estimating Analytical Rate and Empirical Rate.} 
We calculate the analytical rate for each aforementioned setting of $\bm M$ and $\Omega$ using (\ref{equ:rho}). Due to numerical errors in computing small eigenvalues, we need to set all the resulting rates that are greater than $1$ to $1$, indicating there is no linear convergence in such cases. For the calculation of the empirical rate, we run Algorithm~\ref{algo:IHTSVD} in the same setting with $K=10000$ iterations. The initial point $\bm X^{(0)}$ is obtained by adding $i.i.d.$ normally distributed noise with standard deviation $\sigma=10^{-4}$ to the entries of $\bm M$. 
Here we note that $\sigma$ is chosen to be small for two reasons: (i) for large matrices, even small $\sigma$ for individual entry can add up to a large error on the entire matrix; and (ii) while the cost of computing $\lambda_{\min}$ (and hence, the region of convergence) is prohibitively expensive for large matrices, choosing small $\sigma$ empirically guarantees the initialization is inside the region of convergence.

Next, we record the error sequence $\{\norm{\bm X^{(k)} - \bm M}_F\}_{k=1}^K$ and determine if the algorithm converges linearly to $\bm M$ by checking whether there exists $\hat{K} \leq K$ such that $\norm{\bm X^{(\hat{K})} - \bm M}_F < \epsilon \norm{\bm X^{(0)} - \bm M}_F$, for $\epsilon=10^{-8}$. If the relative error is above $\epsilon$, we set the empirical rate to $1$ to indicate that the algorithm does not converge linearly. 
However, it is important to note that this heuristic \textbf{does not perfectly} detect linear convergence since it overlooks the case in which the linear rate is extremely close to $1$ and it requires more than $K=10000$ iterations to reach a relative error below $\epsilon$. As can be seen later, to compromise this computational limit, we resort to setting the analytical rate that is greater than $0.998$ to $1$ when making a comparison between the analytical rate and the empirical rate\footnote{Substituting $\epsilon=10^{-8}$ and $K(\epsilon)=10000$ into (\ref{equ:Ne}) and assuming the constant $c$ is negligible, we obtain $\lambda_{\min}(\bm H) \approx 1.8 \times 10^{-3}$, which in turn implies $\rho = 1 - \lambda_{\min}(\bm H) = 0.998$.}.
In case the relative error is less than $\epsilon$, we terminate the algorithm at the $\hat{K}$-th iteration (early stop) and perform a simple fitting for an exponential decrease on the error sequence $\{\norm{\bm X^{(k)} - \bm M}_F\}_{k=1}^{\hat{K}}$ to obtain the empirical rate.

After obtaining the analytical rate and the empirical rate over the 2-D grid, we report the result in the contour plots of the rate as a function of $\rho_r$ and $\rho_s$ in Fig.~\ref{fig:analytic}-(a) and Fig.~\ref{fig:analytic}-(b). Since our original grid is non-uniform, we perform a scattered data interpolation, which uses a Delaunay triangulation of the scattered sample points to perform interpolation \cite{amidror2002scattered}, to evaluate the rate over a $1001 \times 1001$ uniform grid based on $\rho_r$ and $\rho_s$. Due to the aforementioned limitation of estimating the empirical rate, we apply a threshold of $0.998$ to both of the interpolated data for the analytical rate and the empirical rate, setting any value above the threshold to $1$. 

Finally, at each point of the $12 \times 21$ grid, we calculate the probability of linear convergence over $1000$ runs.
For the analytical rate, the linear convergence is determined by checking whether $\lambda_{\min}(\bm H)<1$.
For the empirical rate, we use the aforementioned discussion on determining weather the algorithm converges linearly with $K=10000$ and $\epsilon=10^{-8}$.
The results are visualized in Fig.~\ref{fig:analytic}-(c) and Fig.~\ref{fig:analytic}-(d).

\vspace{5pt}
\noindent \textbf{Results.} 
Given the values of the analytical rate and the empirical rate of $1000$ matrix completion settings for each point on the $12 \times 21$ grid, the mean squared difference between the two rates in our experiment is $2.9659 \times 10^{-5}$.
Figure~\ref{fig:analytic} illustrates the similarity between the analytical rate and the empirical rate evaluated under various settings of matrix completion. In both Fig.~\ref{fig:analytic}-(a) and Fig.~\ref{fig:analytic}-(b), we observed a matching behavior as in Fig.~\ref{fig:asymptotic101}: smaller rank and more observation result in faster linear convergence of IHTSVD. 
However, the contour lines in Fig.~\ref{fig:analytic} are not as smooth as those with asymptotic behavior in Fig.~\ref{fig:asymptotic101} due to the resolution of the grid as well as the large variance of the convergence rate under different sampling patterns when $n_1$ and $n_2$ are relatively small.
On the other hand, it can be seen from Fig.~\ref{fig:analytic}-(c) and Fig.~\ref{fig:analytic}-(d) that there are a linear-convergence area (black) above the boundary line at $1-\rho_s = (1-\rho_r)^2$ and a no-linear-convergence area (white) below the boundary line. The transition area (gray) near above the boundary line corresponds to the settings in which some sampling sets yield $\lambda_{\min}(\bm H)=0$ while some other sampling sets yield $\lambda_{\min}(\bm H) > 0$. We discuss this transition region further in the next experiment.

To conclude, note that in order to obtain the analytical rate, we need to compute the smallest eigenvalue of a $(n - s) \times (n - s)$ matrix, which is computationally expensive for large $n=n_1n_2$. In particular, when $s=\O(n)$, the cost of computing the analytical rate is $\O(n^3)$. On the other hand, the empirical rate offers an alternative but more efficient way to estimate the convergence rate via running Algorithm~\ref{algo:IHTSVD} whose computational complexity per iteration is $\O(nr)$. As a by-product, our proposed empirical rate can be used to efficiently estimate the smallest eigenvalue of the large matrix $\bm H$.

\subsection{Non-asymptotic Rate versus Asymptotic Rate}

\begin{figure*}
    \flushleft
    \begin{subfigure}[b]{0.31\textwidth}
        \centering
        \includegraphics[width=\textwidth]{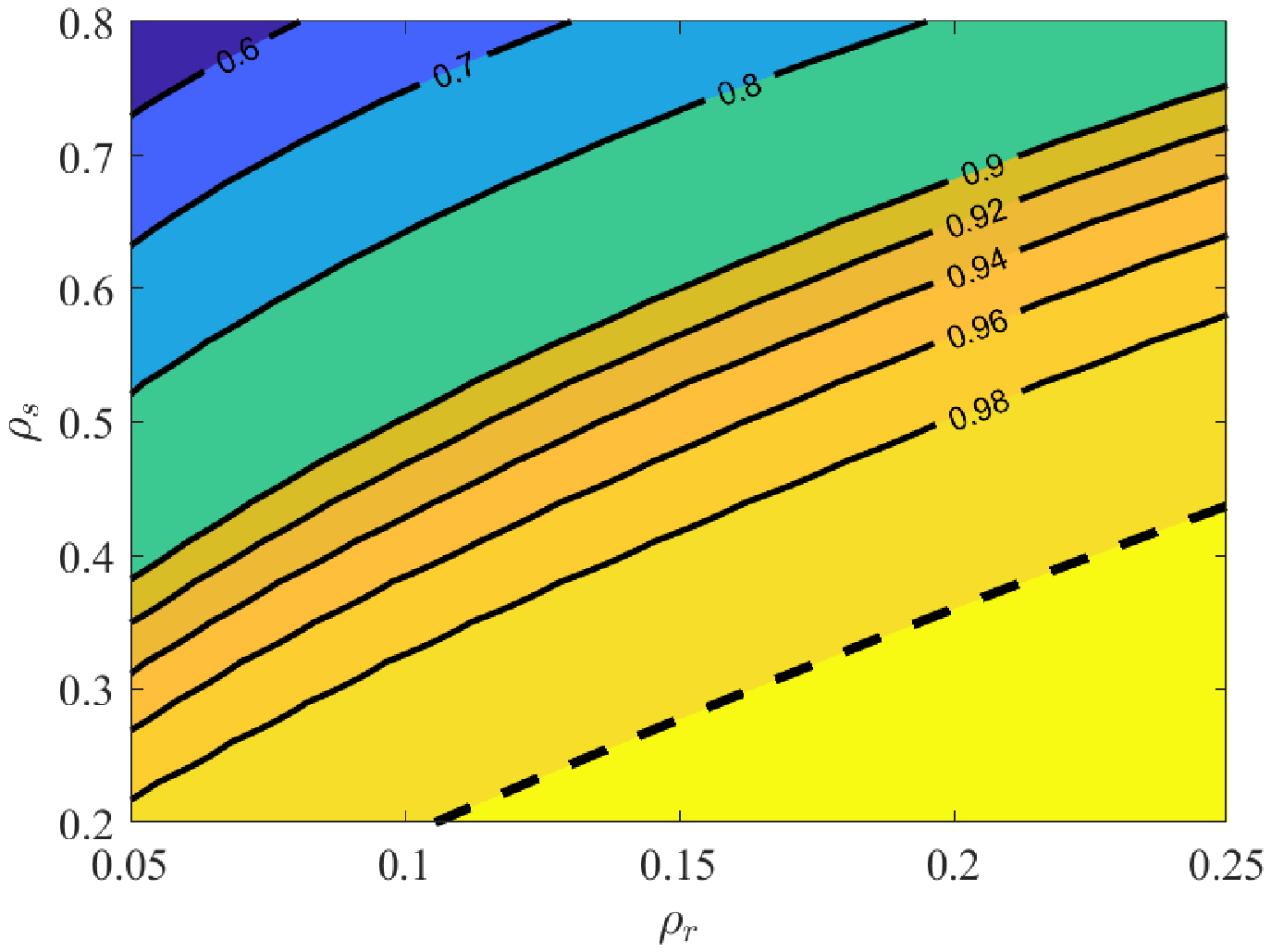}
        \caption{$n_1=500, n_2=400$}
    \end{subfigure}
    \quad
    \begin{subfigure}[b]{0.31\textwidth}
        \centering
        \includegraphics[width=\textwidth]{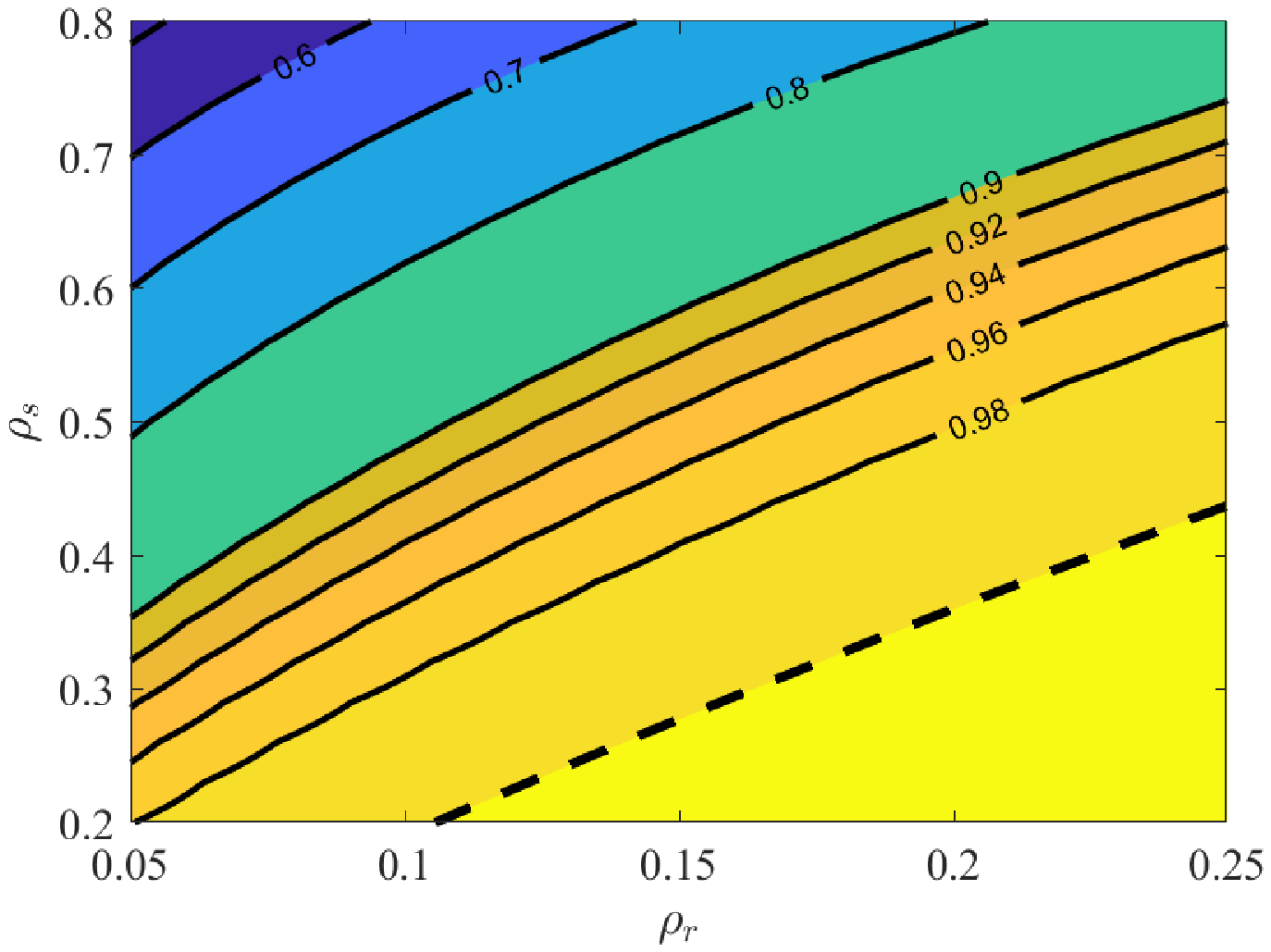}
        \caption{$n_1=1200, n_2=1000$}
    \end{subfigure}
    \quad
    \begin{subfigure}[b]{0.31\textwidth}
        \centering
        \includegraphics[width=\textwidth]{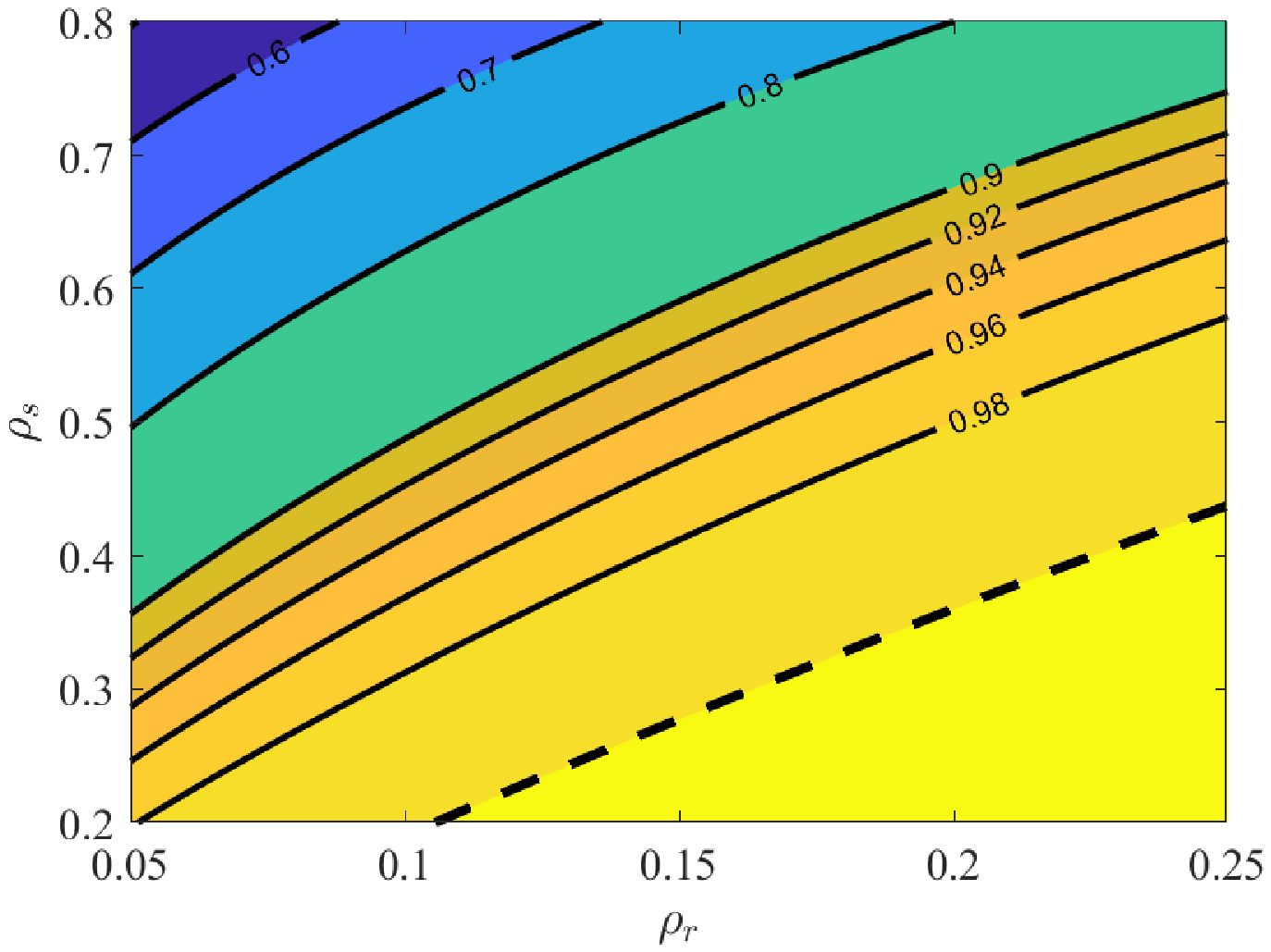}
        \caption{$p_\infty$ (zoom in)}
    \end{subfigure}
    
    
    \begin{subfigure}[b]{0.31\textwidth}
        \centering
        \includegraphics[width=\textwidth]{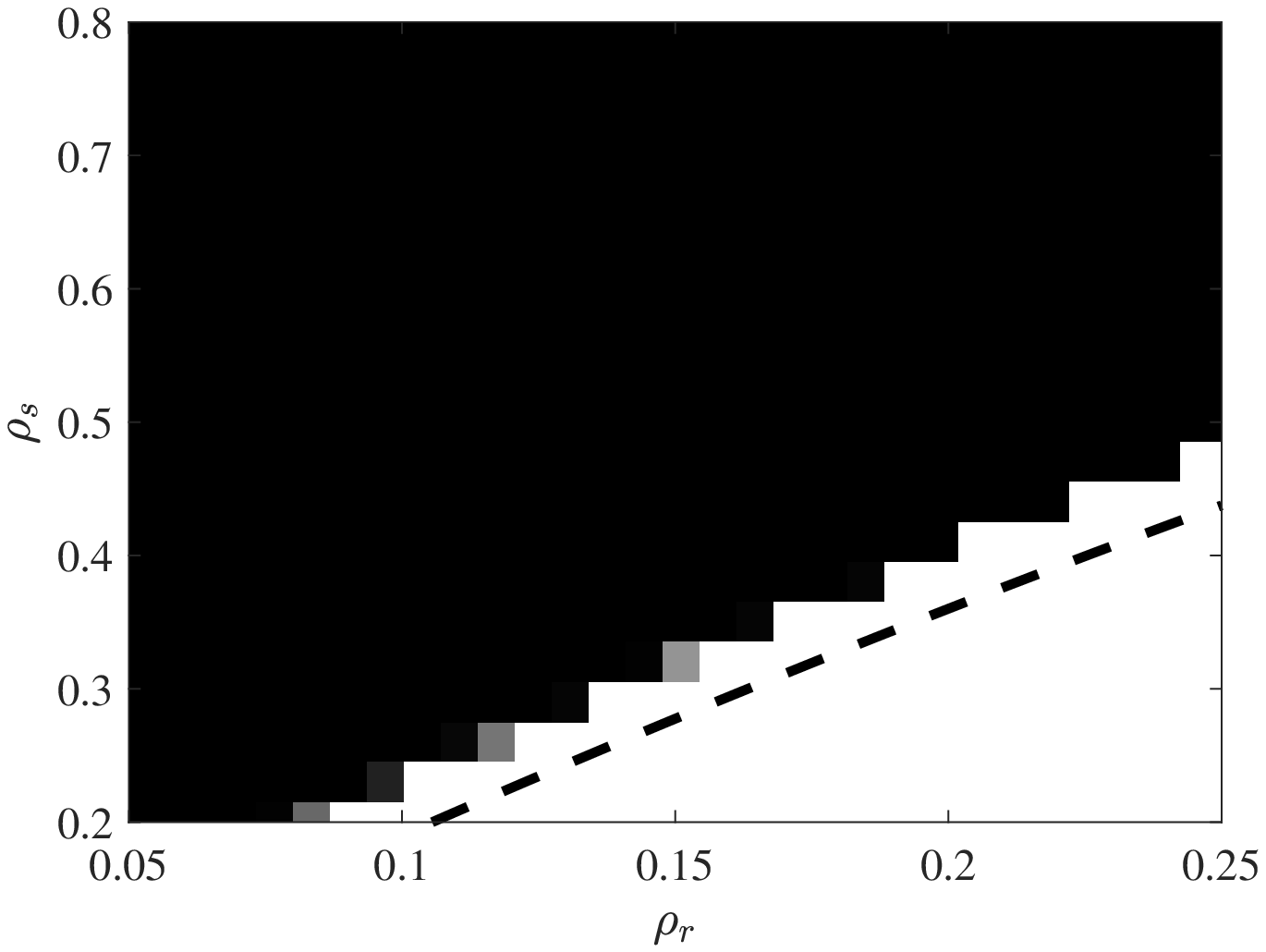}
        \caption{$n_1=500, n_2=400$}
    \end{subfigure}
    \quad
    \begin{subfigure}[b]{0.31\textwidth}
        \centering
        \includegraphics[width=\textwidth]{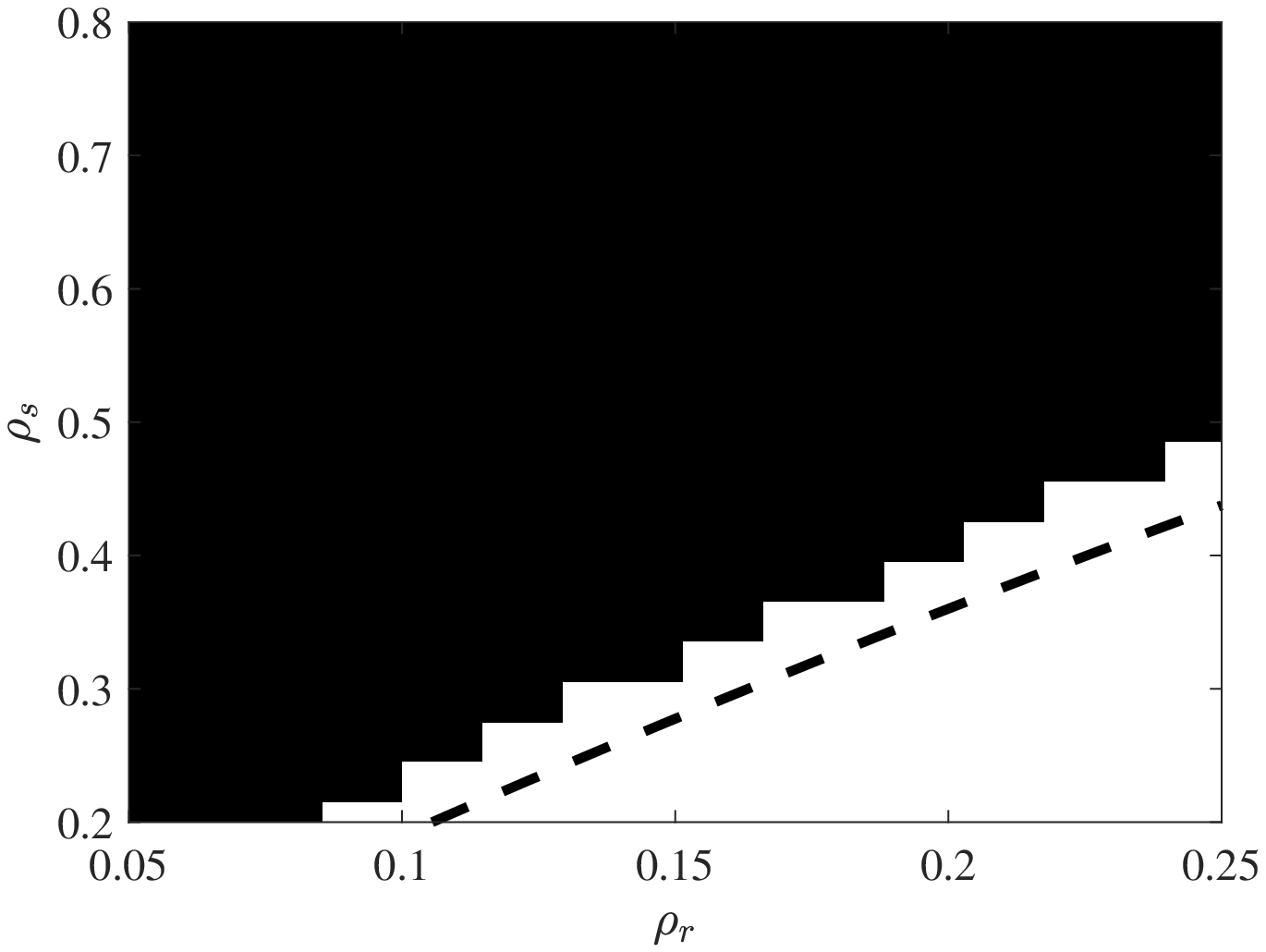}
        \caption{$n_1=1200, n_2=1000$}
    \end{subfigure}
    \quad
    \begin{subfigure}[b]{0.31\textwidth}
        \centering
        \includegraphics[width=\textwidth]{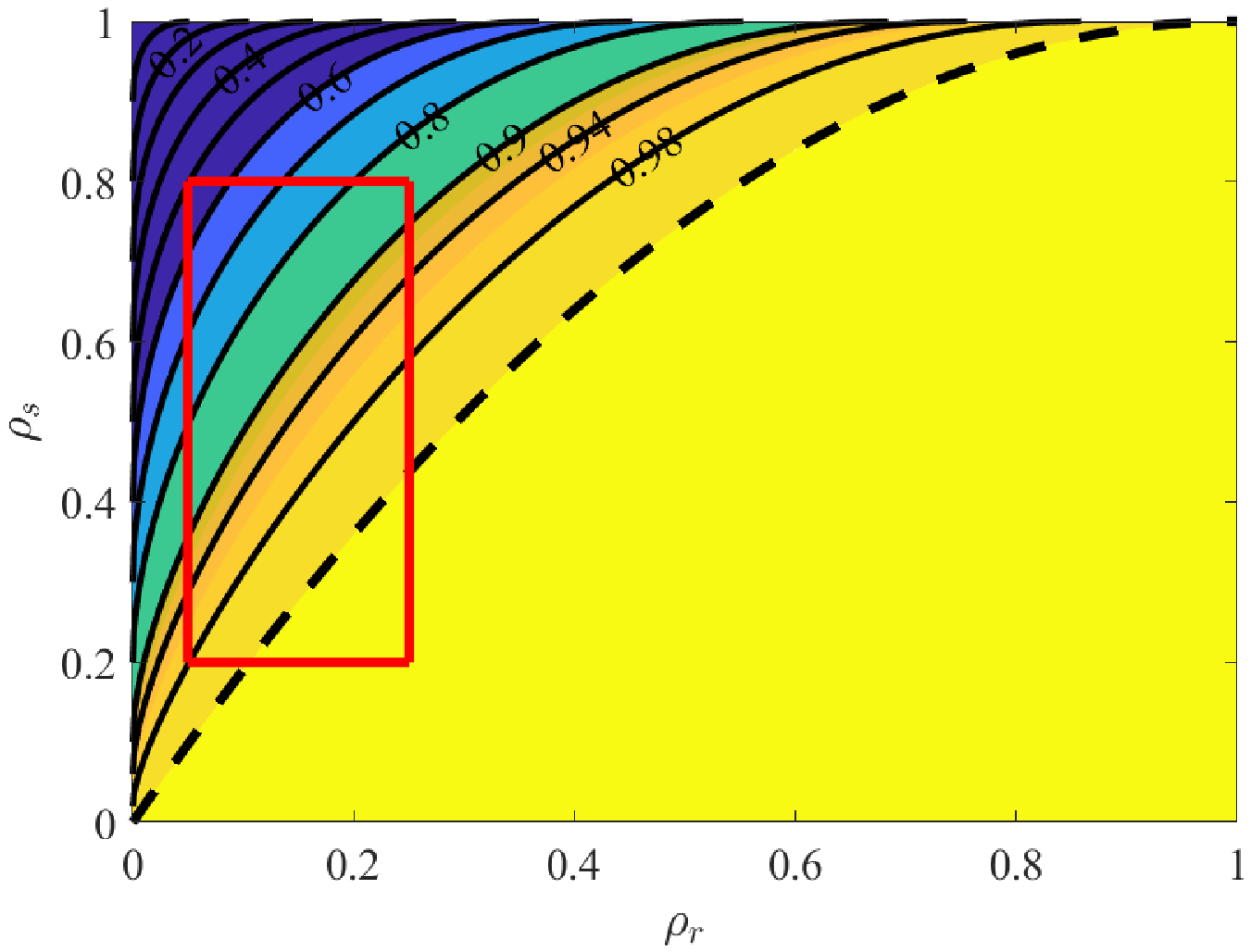}
        \caption{$p_\infty$ (zoom out)}
    \end{subfigure}
    \hfil
    
    \caption{The empirical rate and the asymptotic rate of convergence of IHTSVD as a function of the relative rank $\rho_r$ and the sampling ratio $\rho_s$. (a) Contour plot of the empirical rate as a function of $\rho_r$ and $\rho_s$ for $n_1=500, n_2=400$. (b) Contour plot of the empirical rate as a function of $\rho_r$ and $\rho_s$ for $n_1=1200, n_2=1000$. (c) A zoom-in contour plot of the asymptotic rate as a function of $\rho_r$ and $\rho_s$. (d) Empirical probability of linear convergence based on the empirical rate in (a). (e) Empirical probability of linear convergence based on the empirical rate in (b).\protect\footnotemark ~(f) A zoomed-out contour plot of the asymptotic rate as a function of $\rho_r$ and $\rho_s$. The red solid rectangular corresponds to the zoomed-in region in (c). The data is evaluated based on 2-D grids over $\rho_r$ and $\rho_s$ and the value of each point in each grid is averaged over $100$ runs. Additionally, a dashed line is included in each plot to indicate the line $1-\rho_s = (1-\rho_r)^2$. The striking similarity between plots (b) and (c) illustrates the utility of our convergence rate analysis in large-scale settings.}
    \label{fig:asymp}
\end{figure*}

In this experiment, we compare the asymptotic rate given in Theorem~\ref{theo:rho_asymp} with the convergence rate of IHTSVD for large-scale matrix completion. For convenience, we refer the latter as the non-asymptotic rate. As mentioned, we use the empirical rate instead of the analytical rate to estimate the non-asymptotic rate due to the computational efficiency. 

\vspace{5pt}
\noindent \textbf{Data generation.}
We consider two settings of $(n_1,n_2)$, i.e., $n_1=500, n_2=400$ and $n_1=1200, n_2=1000$.
Similar to the previous experiment, we generate $\bm M$ and $\Omega$ based on a 2-D grid over $r$ and $s$. While the values of $s$ are still selected from the set $\{ 0.2n, 0.23n,0.26n,\ldots, 0.8n \}$, the values of $r$ are chosen differently for each setting of $(n_1,n_2)$. In particular, for $n_1=500, n_2=400$, we select the values of $r$ from the linearly spaced set $\{1,4,7,\ldots,118\}$. For $n_1=1200, n_2=1000$, we select the values of $r$ from the linearly spaced set $\{1,9,17,\ldots,297\}$. 
Thus, in the former setting, the grid size is $40 \times 21$, while in the latter setting, the grid size is $38 \times 21$. We note that both grids are non-uniform in terms of $\rho_r$ and $\rho_s$.

\vspace{5pt}
\noindent \textbf{Implementation.}
The calculations of the empirical rate and the probability of linear convergence are the same as the previous experiment. For computational efficiency, we omit the points on the grid that are below the boundary line $1-\rho_s = (1-\rho_r)^2$, i.e., $s<(n_1+n_2-r)r$, since it is evident that there is no linear convergence guaranteed at these points. No analytical rate is given in this experiment because calculating the smallest eigenvalue of a $(n_1n_2-s) \times (n_1n_2-s)$ matrix is computationally expensive for large $n_1$ and $n_2$. On the other hand, the contour plot of the asymptotic rate is straightforward to obtain using (\ref{equ:p_infty}).

\footnotetext{In (d) and (e), the black color corresponds to linear convergence, whereas the white color corresponds to no linear convergence.}

\vspace{5pt}
\noindent \textbf{Results.}
In Fig.~\ref{fig:asymp}-(a) and Fig.~\ref{fig:asymp}-(b), we present the average empirical rate of linear convergence of IHTSVD as a function of the relative rank and the sampling rate in two large-scale settings. Observing  the average empirical rate from Fig.~\ref{fig:analytic}-(b) to Fig.~\ref{fig:asymp}-(a) and to Fig.~\ref{fig:asymp}-(b) as the dimensions increase, we note a shift of the contour lines towards the bottom-right corner, approaching those of the asymptotic rate in Fig.~\ref{fig:asymp}-(c).
This matches our intuition from Theorem~\ref{theo:rho_asymp} that as the dimensions grow to infinity, the linear rate of IHTSVD converges to the asymptotic rate $p_\infty$.
Additionally, from Fig.~\ref{fig:analytic}-(d), Fig.~\ref{fig:asymp}-(d), and Fig.~\ref{fig:asymp}-(e), we observe that the linear-convergence area (black) becomes larger in larger matrix completion settings, indicating the isoline at $0.998$ approaches closer to the line $1-\rho_s = (1-\rho_r)^2$ (dashed line).
It is notable, however, that the transition between the linear-convergence area and the no-linear-convergence area is more abrupt as the dimensions increases.
This phenomenon also matches our intuition in Conjecture~\ref{conj:rho_asymp}, indicating that there is smaller variance in the empirical rate in large-scale settings, with respect to different random sampling patterns on the same underlying matrix.\footnote{Another evidence supporting this argument is the comparison of the coefficient of variation of the empirical rate in Fig.~\ref{fig:asymp}-(a) and Fig.~\ref{fig:asymp}-(b). We provide the detail in Fig.~\ref{fig:var} in the Supplementary Material.}


\section{Conclusions and Future Work}
\label{sec:conc}

In this paper, we established a closed-form expression of the linear convergence rate of an iterative hard thresholding method for solving matrix completion. We also identified the local region around the solution that guarantees the convergence of the algorithm. Furthermore, in large-scale settings, we leveraged the result from random matrix theory to offer a simple estimation of the asymptotic convergence rate in practice. Under certain assumptions, we showed that the convergence rate of IHTSVD converges almost surely to our proposed estimate.

In future work, we would like to extend our local convergence analysis to other IHT methods with different step sizes, e.g., SVP \cite{jain2010guaranteed} and accelerated IHT \cite{vu2019local,vu2019accelerating}.
Moreover, it would be interesting to study the non-asymptotic behavior of the convergence rate in large-scale settings. Finally, we believe the technique presented in this manuscript can be applied to study the local convergence of other non-convex methods such as alternating minimization \cite{jain2013low} and gradient descent \cite{sun2016guaranteed}.



%


\appendix[Proof of Theorem~\ref{theo:IHT}]
\label{appdx:IHT}

\subsection{Proof of Lemma~\ref{lem:error_recur}}
By the definition of the error matrix, we have
\begin{align*}
&\bm E^{(k+1)} = \bm X^{(k+1)} - \bm M \\
&= \Bigl( \P_{\bar{\Omega}} \bigl( \P_r (\bm X^{(k)}) \bigr) + \P_{\Omega}(\bm M) \Bigr) - \bigl( \P_\Omega(\bm M) + \P_{\bar{\Omega}}(\bm M) \bigr) \\
&= \P_{\bar{\Omega}} \bigl( \P_r (\bm M + \bm E^{(k)}) - \bm M \bigr) . \numberthis \label{equ:Ek1}
\end{align*}
From Proposition~\ref{prop:rankop}, we can reorganize (\ref{equ:rankop}) to obtain
\begin{align*}
    \P_r (\bm M + \bm E^{(k)}) - \bm M = \bm E^{(k)} - \bm P_{\bm U_{\perp}} \bm E^{(k)} \bm P_{\bm V_{\perp}} + \bm R(\bm E^{(k)}) .
\end{align*}
Substituting the last equation back into (\ref{equ:Ek1}) yields the recursion on the error matrix as in (\ref{equ:E}).

Next, let us denote $\bm e^{(k)} = {\bm S}_{\bar{\Omega}}^{\topnew} \vect (\bm E^{(k)})$, for $k=1,2,\ldots$.
Vectorizing equation (\ref{equ:E}) and left-multiplying both sides with ${\bm S}_{\bar{\Omega}}$ yield
\begin{align*}
&\bm e^{(k+1)} = {\bm S}_{\bar{\Omega}}^{\topnew} \vect \Bigl( \P_{\bar{\Omega}} \bigl( \bm E^{(k)} - \bm P_{\bm U_{\perp}} \bm E^{(k)} \bm P_{\bm V_{\perp}} + \bm R(\bm E^{(k)}) \bigr) \Bigr) .
\end{align*}
Using the property of selection matrices in Definition~\ref{def:S}, we further have
\begin{align*}
\bm e^{(k+1)} &= {\bm S}_{\bar{\Omega}}^{\topnew} {\bm S}_{\bar{\Omega}} {\bm S}_{\bar{\Omega}}^{\topnew} \vect \bigl( \bm E^{(k)} - \bm P_{\bm U_{\perp}} \bm E^{(k)} \bm P_{\bm V_{\perp}} + \bm R(\bm E^{(k)}) \bigr) \\
&= {\bm S}_{\bar{\Omega}}^{\topnew} \vect \bigl( \bm E^{(k)} - \bm P_{\bm U_{\perp}} \bm E^{(k)} \bm P_{\bm V_{\perp}} + \bm R(\bm E^{(k)}) \bigr) .
\end{align*}
Since $\vect(\bm P_{\bm U_{\perp}} \bm E^{(k)} \bm P_{\bm V_{\perp}}) = (\bm P_{\bm V_{\perp}} \otimes \bm P_{\bm U_{\perp}}) \vect(\bm E^{(k)})$, the last equation can be represented as
\begin{align*}
\bm e^{(k+1)} = {\bm S}_{\bar{\Omega}}^{\topnew} \vect(\bm E^{(k)}) - {\bm S}_{\bar{\Omega}}^{\topnew} &(\bm P_{\bm V_{\perp}} \otimes \bm P_{\bm U_{\perp}}) \vect(\bm E^{(k)}) \\
&+ {\bm S}_{\bar{\Omega}}^{\topnew} \vect \bigl( \bm R(\bm E^{(k)}) \bigr) . \numberthis \label{equ:Eks}
\end{align*}
On the other hand, (\ref{equ:E}) implies, for any $k \geq 1$, $\bm E^{(k)} = \P_{\bar{\Omega}} (\bm E^{(k)})$ and
\begin{align*}
    \vect(\bm E^{(k)}) = \vect\bigl( \P_{\bar{\Omega}} (\bm E^{(k)}) \bigr) = {\bm S}_{\bar{\Omega}} {\bm S}_{\bar{\Omega}}^{\topnew} \vect(\bm E^{(k)}) = {\bm S}_{\bar{\Omega}} \bm e^{(k)} .
\end{align*}
Substituting the last equation into the RHS of (\ref{equ:Eks}) yields (\ref{equ:e}). 


\subsection{Proof of Lemma~\ref{lem:error_ineq}}

Applying the triangle inequality to the RHS of (\ref{equ:e}) yields
\begin{align} \label{equ:einq0}
    \norm{\bm e^{(k+1)}}_2 \leq &\norm{(\bm I - \bm H) \bm e^{(k)} }_2 + \norm{\bm r( \bm e^{(k)})}_2 ,
\end{align}
where we recall $\bm H = {\bm S}_{\bar{\Omega}}^{\topnew} (\bm P_{\bm V_{\perp}} \otimes \bm P_{\bm U_{\perp}}) {\bm S}_{\bar{\Omega}}$.
By the definition of the operator norm, we have 
\begin{align*}
    \norm{(\bm I - \bm H) \bm e^{(k)} }_2 &\leq \norm{\bm I - \bm H}_2 \norm{\bm e^{(k)}}_2 \\
    &= \max_{i} \bigl\{ \abs{1-\lambda_{i}(\bm H)} \bigr\} \cdot \norm{\bm e^{(k)}}_2 \\
    &= \bigl(1-\lambda_{\min}(\bm H)\bigr) \norm{\bm e^{(k)}}_2 \numberthis \label{equ:einq1} ,
\end{align*}
where the last equality stems from the fact that all eigenvalues of $\bm H$ lie between $0$ and $1$.
From (\ref{equ:einq0}) and (\ref{equ:einq1}), we obtain
\begin{align} \label{equ:einq}
    \norm{\bm e^{(k+1)}}_2 \leq \bigl(1-\lambda_{\min}(\bm H)\bigr) \norm{\bm e^{(k)}}_2 + \norm{\bm r( \bm e^{(k)})}_2 . 
\end{align}
The conclusion of lemma follows from the fact that
\begin{align*}
    \norm{\bm e^{(k)}}_2 = \norm{\P_{\bar{\Omega}} \bigl( \bm E^{(k)} \bigr)}_F = \norm{\bm E^{(k)}}_F
\end{align*}
and
\begin{align*}
    \norm{\bm r( \bm e^{(k)})}_2 \leq \norm{\bm R(\bm E^{(k)})}_F \leq \frac{c_1}{\sigma_r} \norm{\bm E^{(k)}}_F^2 .
\end{align*}

\ifCLASSOPTIONcaptionsoff
  \newpage
\fi




%
\bibliographystyle{IEEEtran}
\bibliography{IEEEabrv,refs}

\vfill


\clearpage
\pagenumbering{arabic}
\twocolumn[{\Large \bf Supplementary Material - ``On Asymptotic Linear Convergence Rate of Iterative Hard Thresholding for Matrix Completion'', Trung~Vu, Evgenia~Chunikhina, and Raviv~Raich}\\ \\]



\section*{The first case in Example~\ref{eg:concentrated}}
Using the same argument as in Lemma~5.3 in \cite{yaskov2014universality}, we can replace the complex matrix in (\ref{equ:concentrated}) by a real PSD matrix and prove the following lemma:
\begin{lemma} \label{lem:haar}
Let $\bm a = [a_1,\ldots,a_{qn}]^{\topnew}$ is a random vector with $i.i.d$ entries, where $a_i \sim \mathcal{N}(0,1/n)$. 
Then for any sequence of $qn \times qn$ PSD matrices $\bm M_{qn}$ with uniformly bounded spectral norms $\norm{\bm M_{qn}}_2$, we have
\begin{align*}
    \bigl( \bm a^{\topnew} \bm M_{qn} \bm a - \frac{1}{n} \tr(\bm M_{qn}) \bigr) \overset{\text{p}}{\to} 0 \text{ as } n \to \infty .
\end{align*}
\end{lemma}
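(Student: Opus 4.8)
The plan is the classical second-moment argument: show that $Y_n \triangleq \bm a^{\topnew} \bm M_{qn}\, \bm a$ has mean exactly $\frac{1}{n}\tr(\bm M_{qn})$, bound its variance by a quantity that vanishes as $n\to\infty$, and conclude via Chebyshev's inequality. Throughout I would write $m=qn$, observe that $\bm M_{qn}$ is symmetric (being PSD), and let $c \triangleq \sup_{n} \norm{\bm M_{qn}}_2 < \infty$ denote the uniform bound on the spectral norms.

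First I would compute the mean. Expanding $Y_n = \sum_{i,j}(\bm M_{qn})_{ij}\, a_i a_j$ and using that the entries $a_1,\dots,a_m$ are independent with $\mathbb{E}[a_i]=0$ and $\mathbb{E}[a_i^2]=1/n$, every off-diagonal term has zero expectation, so $\mathbb{E}[Y_n] = \sum_i (\bm M_{qn})_{ii}\,\mathbb{E}[a_i^2] = \frac{1}{n}\tr(\bm M_{qn})$, which is exactly the centering appearing in the statement.

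Second — and this is the only step with any real content — I would compute $\Var(Y_n)$ using Gaussianity. Expanding $\mathbb{E}[Y_n^2] = \sum_{i,j,k,l}(\bm M_{qn})_{ij}(\bm M_{qn})_{kl}\,\mathbb{E}[a_i a_j a_k a_l]$ and applying Isserlis' (Wick's) formula $\mathbb{E}[a_i a_j a_k a_l] = \frac{1}{n^2}(\delta_{ij}\delta_{kl}+\delta_{ik}\delta_{jl}+\delta_{il}\delta_{jk})$ for a centered Gaussian vector, the $\tr(\bm M_{qn})^2$ contribution cancels against $\mathbb{E}[Y_n]^2$ (this cancellation is precisely the absence of an excess-kurtosis term, built into Wick's formula for Gaussians), leaving the classical identity $\Var(Y_n) = \frac{2}{n^2}\tr(\bm M_{qn}^2) = \frac{2}{n^2}\norm{\bm M_{qn}}_F^2$. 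A crude eigenvalue bound then finishes it: since $\bm M_{qn}$ has $m=qn$ eigenvalues, each at most $\norm{\bm M_{qn}}_2 \le c$, we get $\norm{\bm M_{qn}}_F^2 = \sum_{\ell=1}^{m}\lambda_\ell(\bm M_{qn})^2 \le m c^2 = qn\,c^2$, hence $\Var(Y_n) \le \frac{2qc^2}{n} \to 0$.

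Finally, Chebyshev's inequality gives, for every $\varepsilon>0$, $\Pr\bigl(|Y_n - \tfrac{1}{n}\tr(\bm M_{qn})|>\varepsilon\bigr) \le \Var(Y_n)/\varepsilon^2 \le 2qc^2/(n\varepsilon^2) \to 0$, which is the asserted convergence in probability, and the bound depends on the sequence only through the common constant $c$. The main obstacle, such as it is, is purely the bookkeeping in the Wick expansion of the second step; if one wanted the argument to cover general i.i.d.\ entries (as in Lemma~5.3 of \cite{yaskov2014universality}) rather than Gaussian ones, the variance would acquire the extra term $\bigl(\mathbb{E}[a_1^4]-3/n^2\bigr)\sum_i (\bm M_{qn})_{ii}^2$, which is still $\O(1/n)$ under a finite scaled-fourth-moment hypothesis and hence equally harmless.
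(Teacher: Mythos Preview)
Your proof is correct and follows essentially the same route as the paper: compute $\mathbb{E}[Y_n]=\frac{1}{n}\tr(\bm M_{qn})$, use the Gaussian fourth-moment (Isserlis/Wick) identity to get $\Var(Y_n)=\frac{2}{n^2}\norm{\bm M_{qn}}_F^2$, bound $\norm{\bm M_{qn}}_F^2\le qn\,c^2$ via the eigenvalues, and conclude by Chebyshev. The only cosmetic difference is that the paper writes the variance as a sum of covariances before applying the fourth-moment formula, whereas you expand $\mathbb{E}[Y_n^2]$ directly; the resulting computation and bounds are identical.
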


\begin{proof}
To simplify our notation, let us denote the $(i,j)$-th entry of $\bm M_{qn}$ by $M_{ij}$ and $\delta_{ij}$ is the indicator of the event $i=j$.
Since $a_i$ are $i.i.d$ normally distributed, we have
\begin{align*}
    &{\mathbb E}[a_i] = 0, \quad {\mathbb E}[a_i a_j] = \delta_{ij} \frac{1}{n}, \\
    &{\mathbb E}[a_i a_j a_k a_l] = (\delta_{ij} \delta_{kl} + \delta_{ik} \delta_{jl} + \delta_{il} \delta_{jk}) \frac{1}{n^2} , \numberthis \label{equ:normalE}
\end{align*}
for any indices $1 \leq i,j,k,l \leq n$. In order to prove $\bigl( \bm a^{\topnew} \bm M_{qn} \bm a - \frac{1}{n} \tr(\bm M_{qn}) \bigr) \overset{\text{p}}{\to} 0$, it is sufficient to show that 
\begin{align*}
\begin{cases}
    {\mathbb E}[\bm a^{\topnew} \bm M_{qn} \bm a] = \frac{1}{n} \tr(\bm M_{qn}) , \\
    \Var(\bm a^{\topnew} \bm M_{qn} \bm a) \to 0 \text{ as } n \to \infty .
\end{cases}
\end{align*}
First, by the linearity of expectation, we have
\begin{align*}
    {\mathbb E}[\bm a^{\topnew} \bm M_{qn} \bm a] &= {\mathbb E} \Bigl[\sum_{i,j} M_{ij} a_i a_j \Bigr] \\
    &= \sum_{i,j} M_{ij} {\mathbb E} [a_i a_j] \\
    &= \sum_{i,j} M_{ij} \delta_{ij} \frac{1}{n} \\
    &= \frac{1}{n} \sum_{i=1}^{qn} M_{ii} \\
    &= \frac{1}{n} \tr(\bm M_{qn}) . \numberthis \label{equ:a_ijkl}
\end{align*}
Second, by rewriting the variance of the summation $\sum_{i,j} M_{ij} a_i a_j$ in terms of the sum of covariances, we obtain
\begin{align*}
    \Var(\bm a^{\topnew} \bm M_{qn} \bm a) &= \Var \Bigl( \sum_{i,j} M_{ij} a_i a_j \Bigr) \\
    &= \sum_{i,j,k,l} \Cov (M_{ij} a_i a_j, M_{kl} a_k a_l) . \numberthis \label{equ:cov_sum}
\end{align*}
Using the formula
\begin{align} \label{equ:cov}
    \Cov(X,Y) = {\mathbb E}[XY] - {\mathbb E}[X] {\mathbb E}[Y] ,
\end{align}
and the linearity of expectation, (\ref{equ:cov_sum}) can be represented as
\begin{align*}
    \Var(&\bm a^{\topnew} \bm M_{qn} \bm a) \\
    &= \sum_{i,j,k,l} M_{ij} M_{kl} \Bigl( {\mathbb E}[a_i a_j a_k a_l] - {\mathbb E}[a_i a_j] {\mathbb E}[a_k a_l] \Bigr) \\
    &= \sum_{i,j,k,l} M_{ij} M_{kl} \bigl( \delta_{ik} \delta_{jl} + \delta_{il} \delta_{jk} \bigr) \frac{1}{n^2} \\
    &= \frac{2}{n^2} \sum_{i,j} M_{ij}^2 \\
    &= \frac{2}{n^2} \norm{\bm M_{qn}}_F^2 . \numberthis \label{equ:var_aMa}
\end{align*}
Since $\bm M_{qn}$ is PSD and has bounded spectral norm, all of its eigenvalues are bounded by $0 \leq \lambda_i(\bm M_{qn}) \leq C$, for some constant $C$, and hence,
\begin{align*}
    \norm{\bm M_{qn}}_F^2 = \sum_{i=1}^{qn} \lambda_i^2(\bm M_{qn}) \leq qn C^2 .
\end{align*}
Thus, substituting back into (\ref{equ:var_aMa}) yields
\begin{align*}
    \Var(\bm a^{\topnew} \bm M_{qn} \bm a) \leq \frac{2}{n^2} qn C^2 \to 0 \text{ as } n \to \infty .
\end{align*}
This completes our proof of the lemma.
\end{proof}

\begin{figure*}
    \centering
    \begin{subfigure}[b]{0.45\textwidth}
        \centering
        \includegraphics[width=\textwidth]{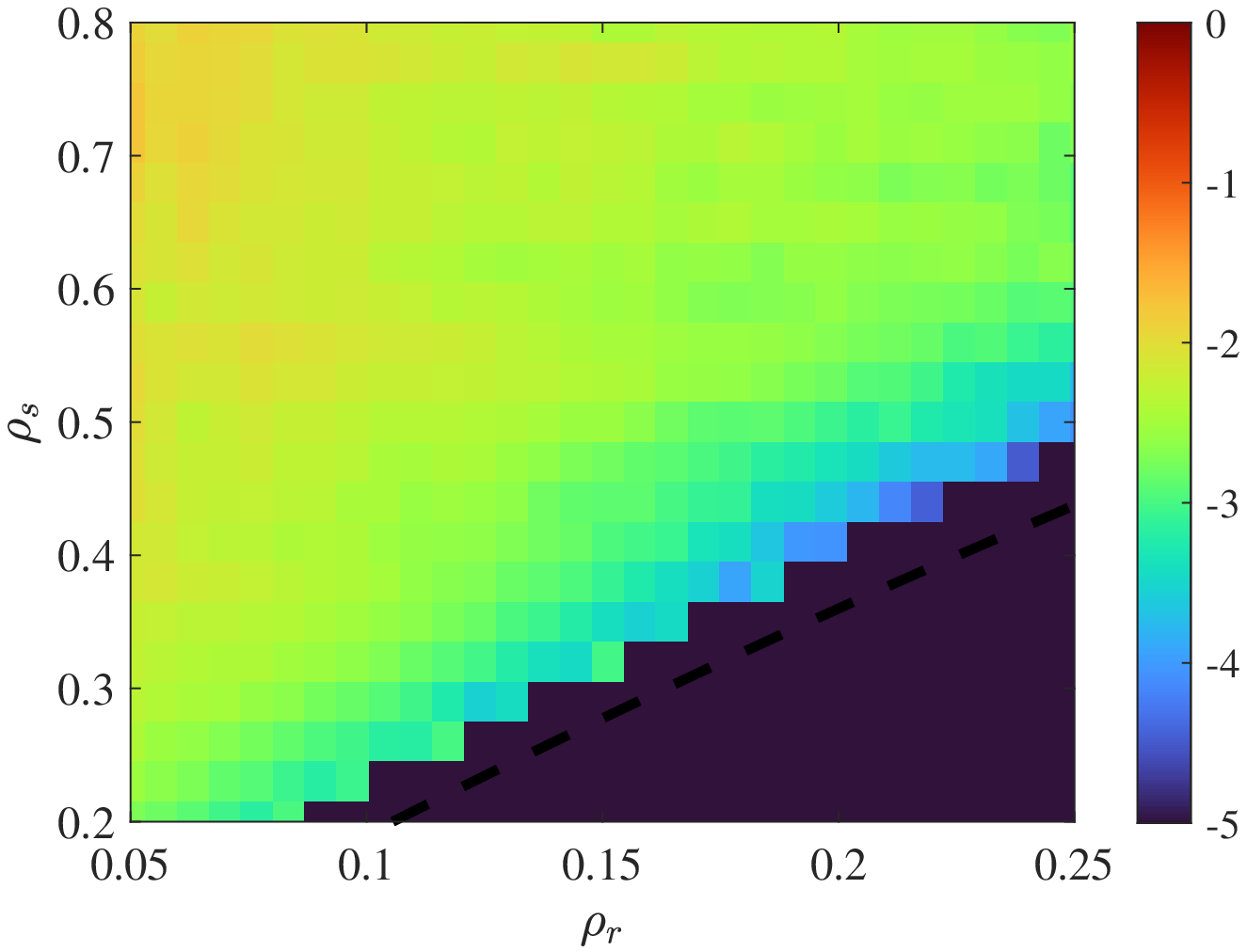}
        \caption{$n_1=500, n_2=400$}
    \end{subfigure}
    \hfill
    \begin{subfigure}[b]{0.45\textwidth}
        \centering
        \includegraphics[width=\textwidth]{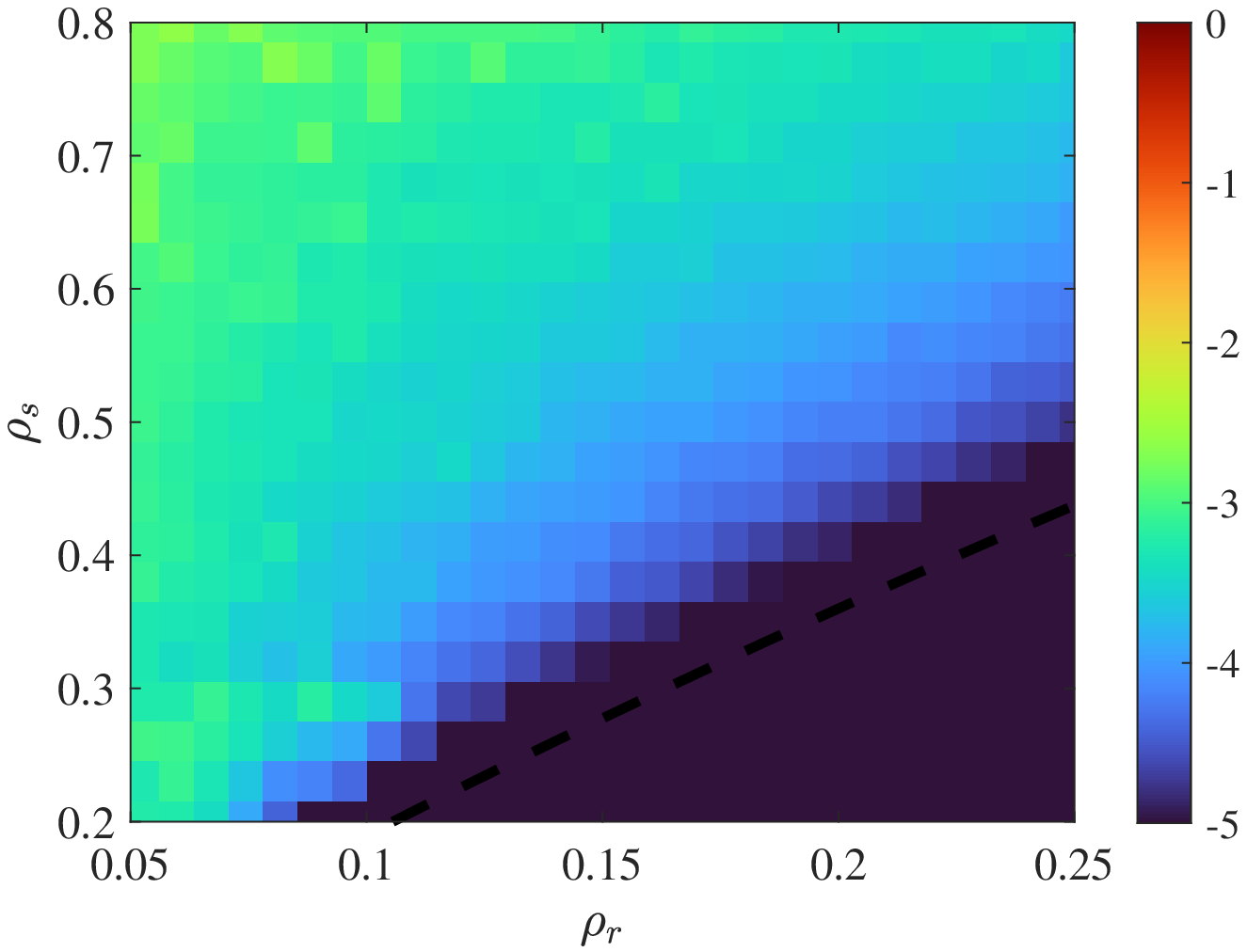}
        \caption{$n_1=1200, n_2=1000$}
    \end{subfigure}
    \hfill
    \caption{The coefficient of variation (on a log10 scale) of the empirical rate shown in Fig.~\ref{fig:asymp}-(a) and (b), respectively. In each plot, the black dashed line corresponds to the boundary line $1-\rho_s = (1-\rho_r)^2$ and the black region on the bottom-right corner corresponds to the settings where no linear convergence is observed (i.e., the empirical rate is set to $1$). The darker color in the right plot demonstrates the increasing concentration of the empirical rate as a random variable when the dimensions grow larger. It is also interesting to note that the variability in relation to the mean decreases as it approaches the boundary line (i.e., from the top-left corner to the bottom-right corner).}
    \label{fig:var}
\end{figure*}

\section*{The second case in Example~\ref{eg:concentrated}}
Similarly, we consider the following lemma:
\begin{lemma} \label{lem:kron}
Let $\bm b = [b_1,\ldots,b_{qn}]$ and $\bm c = [c_1,\ldots,c_{qn}]$ are random vectors with $i.i.d$ entries, where $b_i, c_j \sim \mathcal{N}(0,1/n)$. Denote $m=n^2$, $k=q^2$ and $\bm a = \bm b \otimes \bm c$. 
Then for any sequence of $km \times km$ PSD matrices $\bm M_{km}$ with uniformly bounded spectral norms $\norm{\bm M_{km}}_2$, we have
\begin{align*}
    \bigl( \bm a^{\topnew} \bm M_{km} \bm a - \frac{1}{m} \tr(\bm M_{km}) \bigr) \overset{\text{p}}{\to} 0 \text{ as } n \to \infty .
\end{align*}
\end{lemma}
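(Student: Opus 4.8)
\textbf{Proof proposal for Lemma~\ref{lem:kron}.}
The plan is to mirror the proof of Lemma~\ref{lem:haar} exactly, computing the first two moments of the quadratic form $\bm a^{\topnew} \bm M_{km} \bm a$ and showing the mean equals $\tfrac{1}{m}\tr(\bm M_{km})$ while the variance vanishes. The only new ingredient is that the entries of $\bm a = \bm b \otimes \bm c$ are no longer independent: writing the index set of $\bm a$ as pairs, we have $a_{(i,i')} = b_i c_{i'}$ for $1 \le i \le qn$, $1 \le i' \le qn$. First I would record the moment formulas. By independence of $\bm b$ and $\bm c$ and the Gaussian moments in (\ref{equ:normalE}), we get ${\mathbb E}[a_{(i,i')}] = 0$, ${\mathbb E}[a_{(i,i')} a_{(j,j')}] = {\mathbb E}[b_i b_j]\,{\mathbb E}[c_{i'} c_{j'}] = \delta_{ij}\delta_{i'j'}\tfrac{1}{n^2} = \delta_{ij}\delta_{i'j'}\tfrac{1}{m}$, which immediately gives ${\mathbb E}[\bm a^{\topnew}\bm M_{km}\bm a] = \sum_{(i,i')} M_{(i,i'),(i,i')}\tfrac{1}{m} = \tfrac{1}{m}\tr(\bm M_{km})$ by the same linearity-of-expectation argument as in (\ref{equ:a_ijkl}).

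Next I would turn to the fourth moment ${\mathbb E}[a_{(i,i')}a_{(j,j')}a_{(k,k')}a_{(l,l')}] = {\mathbb E}[b_i b_j b_k b_l]\,{\mathbb E}[c_{i'}c_{j'}c_{k'}c_{l'}]$, expand each factor via (\ref{equ:normalE}) into a sum of three Kronecker-delta products with weight $1/n^2$ each, and subtract ${\mathbb E}[a_{(i,i')}a_{(j,j')}]{\mathbb E}[a_{(k,k')}a_{(l,l')}] = \delta_{ij}\delta_{i'j'}\delta_{kl}\delta_{k'l'}\tfrac{1}{m^2}$. The product $(\delta_{ij}\delta_{kl}+\delta_{ik}\delta_{jl}+\delta_{il}\delta_{jk})(\delta_{i'j'}\delta_{k'l'}+\delta_{i'k'}\delta_{j'l'}+\delta_{i'l'}\delta_{j'k'})$ has nine terms; the term $\delta_{ij}\delta_{kl}\delta_{i'j'}\delta_{k'l'}$ is exactly cancelled by the subtracted piece, leaving eight nonnegative delta-product terms, each contributing, after multiplication by $M_{(i,i'),(j,j')}M_{(k,k'),(l,l')}$ and summation over all eight indices, a quantity bounded by $\tfrac{1}{m^2}\norm{\bm M_{km}}_F^2$ (by Cauchy--Schwarz / the identity $\sum_{i,j}M_{ij}^2 = \norm{\bm M}_F^2$ applied after collapsing the paired indices). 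Hence $\Var(\bm a^{\topnew}\bm M_{km}\bm a) \le \tfrac{8}{m^2}\norm{\bm M_{km}}_F^2$, a direct analogue of (\ref{equ:var_aMa}) with constant $8$ in place of $2$.

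Finally, since $\bm M_{km}$ is PSD with spectral norm bounded by some constant $C$, its $km$ eigenvalues satisfy $0 \le \lambda_i \le C$, so $\norm{\bm M_{km}}_F^2 = \sum_i \lambda_i^2 \le km\,C^2 = k m C^2$, and therefore $\Var(\bm a^{\topnew}\bm M_{km}\bm a) \le \tfrac{8 k C^2}{m} \to 0$ as $n \to \infty$ (recall $m = n^2$). Combined with the mean computation and Chebyshev's inequality, this yields $\bm a^{\topnew}\bm M_{km}\bm a - \tfrac{1}{m}\tr(\bm M_{km}) \overset{\text{p}}{\to} 0$, completing the proof. The main obstacle is purely bookkeeping: correctly enumerating the nine cross-terms in the product of the two fourth-moment expansions, verifying that precisely one of them cancels against the product of second moments, and checking that each of the remaining eight, once the double-index structure is collapsed, is genuinely bounded by $\norm{\bm M_{km}}_F^2 / m^2$ rather than something larger — the nonnegativity of all surviving terms (which makes the crude union bound valid) is the key structural fact that makes this go through cleanly.
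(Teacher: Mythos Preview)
Your direct entrywise fourth-moment expansion is a genuinely different route from the paper's. The paper works in blocks: it writes $\bm a^{\topnew}\bm M_{km}\bm a = \sum_{i,j} b_i(\bm c^{\topnew}\bm M_{[ij]}\bm c)b_j$ with $\bm M_{[ij]}$ the $(i,j)$-th $qn\times qn$ block, invokes Lemma~\ref{lem:haar} for the inner $\bm c$-part together with an auxiliary identity (Lemma~\ref{lem:cAB}) for ${\mathbb E}[\bm c^{\topnew}\bm P\bm c\cdot\bm c^{\topnew}\bm Q\bm c]$, and then bounds four resulting block-trace sums via trace inequalities. Your approach avoids these auxiliary lemmas entirely and is more elementary; the price is the nine-term bookkeeping you flag at the end.

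However, your variance bound has a real gap: the claim that each of the eight surviving terms is bounded by $\norm{\bm M_{km}}_F^2/m^2$ is false for four of them. Take the term $\delta_{ik}\delta_{jl}\,\delta_{i'j'}\delta_{k'l'}$: collapsing indices gives $\tfrac{1}{m^2}\sum_{i,j}\tr^2(\bm M_{[ij]})$. For $\bm M_{km}=\bm I_{km}$ this equals $(qn)^3/m^2$, whereas $\norm{\bm M_{km}}_F^2/m^2=(qn)^2/m^2$ --- off by a factor of $qn$. The same happens for the three other ``mixed'' terms (those pairing a $\delta_{ij}\delta_{kl}$-type factor in one slot with a $\delta_{ik}\delta_{jl}$- or $\delta_{il}\delta_{jk}$-type factor in the other). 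The fix is easy: use $\tr^2(A)\le qn\norm{A}_F^2$ on each block to get the corrected bound $\tfrac{qn}{m^2}\norm{\bm M_{km}}_F^2$ for these four terms, yielding $\Var(\bm a^{\topnew}\bm M_{km}\bm a)=O\bigl(qn\cdot km\,C^2/m^2\bigr)=O(1/n)\to 0$. This matches the $O(1/n)$ variance bound the paper obtains, and the rest of your argument then goes through. Incidentally, the ``nonnegativity of all surviving terms'' remark is not quite the right structural fact here --- what you actually need is that each term is bounded in absolute value, which follows from Cauchy--Schwarz regardless of sign.
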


\begin{proof}
Denote $\bm M_{[ij]}$ is the $(i,j)$-th $qn \times qn$ block of $\bm M_{km}$. Then it is straightforward to verify that
\begin{align*}
    \bm a^{\topnew} \bm M_{km} \bm a = \sum_{i,j} b_i (\bm c^{\topnew} \bm M_{[ij]} \bm c) b_j .
\end{align*}
In order to prove $\bigl( \bm a^{\topnew} \bm M_{km} \bm a - \frac{1}{m} \tr(\bm M_{km}) \bigr) \overset{\text{p}}{\to} 0$, it is sufficient to show that 
\begin{align*}
\begin{cases}
    {\mathbb E}[\bm a^{\topnew} \bm M_{km} \bm a] = \frac{1}{m} \tr(\bm M_{km}) , \\
    \Var(\bm a^{\topnew} \bm M_{km} \bm a) \to 0 \text{ as } n \to \infty .
\end{cases}
\end{align*}
First, we use the linearity of expectation to obtain
\begin{align*}
    {\mathbb E}[\bm a^{\topnew} \bm M_{km} \bm a] &= {\mathbb E} \Bigl[ \sum_{i,j} b_i (\bm c^{\topnew} \bm M_{[ij]} \bm c) b_j \Bigr] \\
    &= \sum_{i,j} {\mathbb E} [b_i b_j] {\mathbb E}[\bm c^{\topnew} \bm M_{[ij]} \bm c] .
\end{align*}
From (\ref{equ:a_ijkl}) and Lemma~\ref{lem:haar}, the last equation is equivalent to
\begin{align*}
    {\mathbb E}[\bm a^{\topnew} \bm M_{km} \bm a] &= \sum_{i,j} \delta_{ij} \frac{1}{n} \cdot \frac{1}{n} \tr(\bm M_{[ij]}) \\
    &= \frac{1}{m} \tr (\bm M_{km}) .
\end{align*}
Second, we have
\begin{align*}
    \Var &(\bm a^{\topnew} \bm M_{km} \bm a) = \Var \bigl(\sum_{i,j} b_i (\bm c^{\topnew} \bm M_{[ij]} \bm c) b_j \bigr) \\
    &= \sum_{i,j,k,l} \Cov \bigl(b_i (\bm c^{\topnew} \bm M_{[ij]} \bm c) b_j , b_k (\bm c^{\topnew} \bm M_{[kl]} \bm c) b_l \bigr) . \numberthis \label{equ:sum_var}
\end{align*}
From (\ref{equ:cov}), each covariance on the RHS of (\ref{equ:sum_var}) can be represented as
\begin{align*}
    \Cov &\bigl(b_i (\bm c^{\topnew} \bm M_{[ij]} \bm c) b_j , b_k (\bm c^{\topnew} \bm M_{[kl]} \bm c) b_l \bigr) \\
    &= {\mathbb E}[b_i b_j b_k b_l] \cdot {\mathbb E}[\bm c^{\topnew} \bm M_{[ij]} \bm c \cdot \bm c^{\topnew} \bm M_{[kl]} \bm c] \\
    &\quad - {\mathbb E}[b_i b_j] \cdot {\mathbb E}[b_k b_l] \cdot {\mathbb E}[\bm c^{\topnew} \bm M_{[ij]} \bm c] \cdot {\mathbb E}[ \bm c^{\topnew} \bm M_{[kl]} \bm c] . \numberthis \label{equ:cov_ijkl}
\end{align*}
\begin{lemma} \label{lem:cAB}
Let $\bm P$ and $\bm Q$ be matrices in $\R^{qn \times qn}$. Then 
\begin{align*}
    {\mathbb E}[\bm c^{\topnew} \bm P \bm c \cdot \bm c^{\topnew} \bm Q \bm c] = \frac{\tr(\bm P) \tr(\bm Q) + \tr(\bm P \bm Q^{\topnew}) + \tr(\bm P \bm Q)}{n^2} .
\end{align*}
\end{lemma}
\noindent The proof of Lemma~\ref{lem:cAB} is straightforward from (\ref{equ:normalE}) and is omitted in this manuscript.  From Lemma~\ref{lem:cAB} and (\ref{equ:normalE}), we can simplify (\ref{equ:cov_ijkl}) as
\begin{align*}
    \Cov &\bigl(b_i (\bm c^{\topnew} \bm M_{[ij]} \bm c) b_j , b_k (\bm c^{\topnew} \bm M_{[kl]} \bm c) b_l \bigr) \\
    &= \frac{1}{n^4} \Bigl( \tr(\bm M_{[ij]} \bm M_{[kl]}) + \tr(\bm M_{[ij]} \bm M_{[kl]}^{\topnew}) \\
    &\qquad \qquad + {\tr}^2(\bm M_{[ij]}) + \tr(\bm M_{[ij]}^2) + \tr(\bm M_{[ij]} \bm M_{[ij]}^{\topnew}) \\
    &\qquad \qquad + \tr(\bm M_{[ij]}) \tr(\bm M_{[ij]}^{\topnew}) + \tr(\bm M_{[ij]}^2) \Bigr) .
\end{align*}
Substituting the last equation back into (\ref{equ:sum_var}) yields
\begin{align*}
    &\Var (\bm a^{\topnew} \bm M_{km} \bm a) = \frac{2}{n^4} \Bigl( \sum_{i,j} {\tr}^2(\bm M_{[ij]}) + \sum_{i,j} {\tr}(\bm M_{[ii]} \bm M_{[jj]}) \\
    &\qquad \qquad + \sum_{i,j} {\tr}(\bm M_{[ij]}^{\topnew} \bm M_{[jj]}) + \sum_{i,j} {\tr}(\bm M_{[ij]}^2) \Bigr) . \numberthis\label{equ:var_tr}
\end{align*}
Next, we bound each term on the RHS of (\ref{equ:var_tr}). To that end, we utilize the following lemma:
\begin{lemma} \label{lem:trA}
For any matrices $\bm A, \bm B \in \R^{n \times n}$, it holds that
\begin{enumerate}
    \item $\norm{\bm A}_F \leq \sqrt{n} \norm{\bm A}_2$,
    \item $\tr^2(\bm A) \leq n \norm{\bm A}_F^2$,
    \item $\tr(\bm A^{\topnew} \bm B) \leq \norm{\bm A}_F \norm{\bm B}_F \leq n \norm{\bm A}_2 \norm{\bm B}_2$,
    \item $\tr(\bm A^2) \leq \norm{\bm A}_F^2 = \tr(\bm A^{\topnew} \bm A)$.
\end{enumerate}
\end{lemma}
\noindent The proof of Lemma~\ref{lem:trA} can be found in \cite{meyer2000matrix} - Chapter~5. Applying Lemma~\ref{lem:trA} with the blocks of size $qn \times qn$, we obtain
\begin{align*}
    \sum_{i,j} {\tr}^2(\bm M_{[ij]}) &\leq \sum_{i,j} qn \norm{\bm M_{[ij]}}_F^2 = qn \norm{\bm M}_F^2 \\
    &\leq (qn)^3 \norm{\bm M}_2 \leq C (qn)^3 ,
\end{align*}
\begin{align*}
    \sum_{i,j} {\tr}(\bm M_{[ii]} \bm M_{[jj]}) &\leq \sum_{i,j} qn \norm{\bm M_{[ii]}}_2 \norm{\bm M_{[jj]}}_2 \\
    &\leq \sum_{i,j} qn \norm{\bm M}_2 \norm{\bm M}_2 = C^2 (qn)^3 ,
\end{align*}
\begin{align*}
    \sum_{i,j} {\tr}(\bm M_{[ij]}^{\topnew} \bm M_{[jj]}) &= \sum_{i,j} \norm{\bm M_{[ij]}}_F^2 = \norm{\bm M}_F^2 \leq C (qn)^2 ,
\end{align*}
\begin{align*}
    \sum_{i,j} {\tr}(\bm M_{[ij]}^2) &\leq \sum_{i,j} \norm{\bm M_{[ij]}}_F^2 = \norm{\bm M}_F^2 \leq C (qn)^2 .
\end{align*}
Therefore, (\ref{equ:var_tr}) can be bounded as
\begin{align*}
    \Var (\bm a^{\topnew} \bm M_{km} \bm a) &\leq \frac{2}{n^4} (C(qn)^3 +  C^2(qn)^3 + 2 C (qn)^2) .
\end{align*}
The conclusion of the lemma follows by the fact that the RHS of the last equation which approaches $0$ as $n \to \infty$.
\end{proof}

\end{document}
